\newtheorem{theorem}{Theorem}[section]
\newtheorem{proposition}[theorem]{Proposition}
\newtheorem{lemma}[theorem]{Lemma}
\newtheorem{corollary}[theorem]{Corollary}
\newtheorem{question}[theorem]{Question}
\newtheorem{definition}[theorem]{Definition}
\theoremstyle{plain}
\theoremstyle{remark}
\newtheorem{remark}[theorem]{Remark}
\newtheorem{example}[theorem]{Example}
\newcommand{\tm}{\tilde{m}}
\newcommand{\tn}{\tilde{n}}
\newcommand{\Q}{{\mathbb Q}}
\newcommand{\Z}{{\mathbb Z}}
\newcommand{\N}{{\mathbb N}}
\newcommand{\cC}{{\mathcal C}}
\newcommand{\cA}{{\mathcal A}}
\def\Frac{\operatorname{Frac}}
\DeclareMathOperator{\Spec}{Spec}
\DeclareMathOperator{\Sym}{Sym}
\DeclareMathOperator{\Gal}{Gal}
\DeclareMathOperator{\tor}{tor}
\DeclareMathOperator{\cha}{char}
\DeclareMathOperator{\ord}{ord}
\DeclareMathOperator{\alg}{alg}
\DeclareMathOperator{\Div}{Div}
\DeclareMathOperator{\id}{id}
\newcommand{\bfx}{{\mathbf x}}
\newcommand{\bfa}{{\mathbf a}}
\newcommand{\bfu}{{\mathbf u}}
\newcommand{\bfm}{{\mathbf m}}
\newcommand{\bfD}{{\mathbf D}}
\newcommand{\bF}{{\mathbb F}}
\newcommand{\cO}{\mathcal{O}}
\newcommand{\cB}{\mathcal{B}}
\newcommand{\cM}{\mathcal{M}}
\newcommand{\scrS}{\mathscr{S}}
\newcommand{\scrX}{\mathscr{X}}
\newcommand{\scrJ}{\mathscr{J}}
\newcommand{\scrA}{\mathscr{A}}
\newcommand{\scrD}{\mathscr{D}}
\newcommand{\scrT}{\mathscr{T}}
\newcommand{\scrM}{\mathscr{M}}
\DeclareMathOperator{\discr}{discr}
\DeclareMathOperator{\tors}{tors}
\author{Jason P.~Bell}
\address{
Jason P.~Bell\\
University of Waterloo\\
Department of Pure Mathematics\\
Waterloo, Ontario, Canada N2L 3G1}
\email{jpbell@uwaterloo.ca}
\author{Khoa D.~Nguyen}
\address{
Khoa D.~Nguyen \\
Department of Mathematics\\
University of British Columbia\\
And Pacific Institute for The Mathematical Sciences\\ 
Vancouver, BC V6T 1Z2, Canada}
\email{dknguyen@math.ubc.ca}
\urladdr{www.math.ubc.ca/\~{}dknguyen}
\keywords{positive characteristic, unit equations, discriminant equations, monogenic orders}
\subjclass[2010]{Primary: 11D61; Secondary: 11R99, 11T99}
\begin{document}
	\title[Monogenic Orders in Positive Characteristic]{Some Finiteness Results on Monogenic Orders in Positive Characteristic}
	
	\date{August 30, 2015}
	
	\begin{abstract}
	This work is motivated by the papers 
%	\cite{Gyory1984}, 
	\cite{EG1985} 
%	\cite{BellHare}, 
	and
	\cite{KN-TAMS2015} in which the following
	two problems are solved. Let $\cO$ be a finitely
	generated $\Z$-algebra that is an integrally closed domain of characteristic
	zero, consider the following problems:
	\begin{itemize}
		\item [(A)] Fix $s$ that is integral over $\cO$, describe all
		$t$ such that $\cO[s]=\cO[t]$.		
		
		\item [(B)] Fix $s$ and $t$ that are integral over $\cO$, describe
		all pairs $(m,n)\in\N^2$ such that $\cO[s^m]=\cO[t^n]$.
	\end{itemize} 
	In this paper, we solve these problems 
	and provide a uniform bound
	for a certain ``discriminant form equation''
	that is closely related to Problem (A) 
	when \emph{$\cO$ has characteristic
	$p>0$}. While our general
	strategy roughly follows 
%	\cite{Gyory1984}, 
	\cite{EG1985} and \cite{KN-TAMS2015}, many
	new delicate issues arise due to the presence of the Frobenius automorphism
	$x\mapsto x^p$. Recent advances in unit equations over fields of positive 
	characteristic
%	obtained in 
%	\cite{Voloch1998}, \cite{MoosaScanlon}, \cite{Masser2004}, \cite{Ghioca-TAMS},
%	\cite{Derksen2007}, \cite{AB2012}, and especially 
%	\cite{DM2012} 
	together with classical results in
	characteristic zero 
%	\cite{ESS} 
	play an important
	role in this paper.
	\end{abstract}

	\maketitle{}
	
  \section{Introduction}\label{sec:intro}
  Throughout this paper, let $\N$ denote
  the set of positive integers, let
  $\N_0:=\N\cup\{0\}$, 
  and let $p$ be a prime number.  For every power $q>1$ of $p$,
  let $\bF_q$  denote the
  finite field of order $q$. 
  
  Let $\cO$ be a finitely generated $\Z$-algebra that 
  is an integrally closed
  domain with fraction field $K$. 
  Rings of the form $\cO[s]$
  where
  $s$ is integral over $\cO$ and separable over $K$
  are called \emph{monogenic orders} over $\cO$. When $\cha(\cO)=0$,
  certain diophantine aspects of monogenic orders over $\cO$
  have been studied extensively by Gy\H{o}ry, Evertse and other authors
  \cite{Gyory1984}, \cite{EG1985}, \cite{BellHare}, \cite{BEG2013},
  \cite{KN-TAMS2015}. More specifically, when $\cha(\cO)=0$,
  the following two problems are solved in \cite{Gyory1984}, \cite{EG1985},
  \cite{BellHare}, and \cite{KN-TAMS2015}:
  \begin{itemize}
	\item [(A)] Fix $s$ that is integral over $\cO$ and separable over $K$,
	describe all $t$ such that $\cO[s]=\cO[t]$.	
	\item [(B)] Fix $s$ and $t$ that are integral over $\cO$ and separable over $K$, describe
	all pairs $(m,n)\in \N^2$ such that $\cO[s^m]=\cO[t^n]$.
  \end{itemize}

	For Problem (A), Gy\H{o}ry \cite{Gyory1984}
	and Evertse-Gy\H{o}ry \cite{EG1985}   
    prove that there are finitely many 
	elements $t_1,\ldots,t_N$ such that $\cO[t_i]=\cO[s]$ for
	$1\leq i\leq N$, and if $\cO[t]=\cO[s]$ then $t=at_j+b$
	for some $1\leq j\leq N$, $a\in \cO^*$, and $b\in \cO$. Moreover,
	there is a remarkable uniform bound on $N$. 
	After that, Bell and Hare \cite{BellHare}, \cite{BellHare-Corrigendum} study 
	Problem (A) and a weak form
	of Problem (B)	in the special case when $\cO=\Z$ and $s$ and $t$ are 
	algebraic integers satisfying certain properties. The main motivation for
	their work is the so called Pisot-cyclotomic numbers which have
	applications in the study of quasicrystals and quasilattices. Finally,
	in \cite[Theorem~1.4]{KN-TAMS2015}, the second author settles Problem (B) by
	proving that outside certain explicit ``degenerate'' families, there are
	only finitely 
	many $(m,n)\in \N^2$ such that $\cO[s^m]=\cO[t^n]$. Broadly
	speaking, all of these papers use the fact that 
	a linear equation has only
	finitely many non-degenerate solutions taken inside a finitely 
	generated group. Such unit equations play a very important role
	in classical diophantine geometry (see, for instance, \cite{ESS}, \cite[Chapter~5]{BG}, and \cite{EG-UnitBook}).

	The question of what happens when $\cha(\cO)=p$ is natural and  
	interesting on its own. 
   	It is well-known
   	that a na\"ive analogue in characteristic $p$ of many 
		fundamental
   	diophantine
   	problems in characteristic $0$ does not hold and, sometimes, 
   	formulating
   	a correct statement is as important as the proof itself. One of the
   	most spectacular examples is a positive characteristic analogue of the 
   	celebrated Mordell-Lang Conjecture for semi-abelian 
   	varieties \cite{AV1992} 
   	proved by Hrushovski \cite{Hrushovski1996}. Certain aspects
		of Hrushovski's work have been refined
		by results of Moosa-Scanlon \cite{MoosaScanlon}
		and Ghioca \cite{Ghioca-TAMS}. When the ambient semi-abelian 
		variety is a torus, the resulting intersection
		in the Mordell-Lang Conjecture corresponds to
		solutions of certain unit equations
		taken inside a finitely generated group.
		Thanks to
		further work of Voloch, Masser, Derksen, Adamczewski, and the first
		author
		\cite{Voloch1998}, \cite{Masser2004}, \cite{Derksen2007}, 
		\cite{AB2012},
		\cite{DM2012}, rather complete results 
	   on such unit equations (in characteristic $p$) have
		been obtained.
  
	\emph{For the rest of this paper, assume $\cha(\cO)=p$ and $K$
	has transcendence degree at least one over $\bF_p$.} Note
	that the case $K\subset \bar{\bF}_p$ renders both problems 
	(A) and (B)	obvious. While our approach to these problems 
	roughly follows 
	the general
	strategy in \cite{Gyory1984}, \cite{EG1985}, and \cite{KN-TAMS2015}, 
	new delicate algebraic and combinatorial
	issues arise due to the presence of the Frobenius automorphism 
	$x\mapsto x^p$. For Problem (A), at first glance, we might 
	modify the results of Evertse-Gy\H{o}ry by asking
	if there exist finitely many elements $t_1,\ldots,t_N$
	such that every $t$ with $\cO[t]=\cO[s]$
	has the form $t=at_i^{p^m}+b$ for some $1\leq i\leq N$, $m\in \N_0$,
	$a\in \cO^*$, and $b\in \cO$. However, this is not true and 
	a slight modification is needed as illustrated in
	the next example:
	
	\begin{example}\label{ex:A1}  
		Let $\cO=\bF_2[x]$ and let $y$ be a root of $Y^4+x^2Y^2+Y+1=0$.
		Let $s=xy$. For every $m\in \N_0$, write $s_m=xy^{4^m}$.
		It is not
		difficult to show that
		$s_m$ and $s$ have the same discriminant
		over $K$
		and $s_{m+1}\in\cO[s_m]$ for every $m\in\N_0$
		(see Subsection~\ref{subsec:example}, especially
		the proof of Lemma~\ref{lem:OsOzm}),
		we have that $\cO[s_m]=\cO[s]$ for every $m$. 
		It is not hard
		to check that for $i\neq j$, the element $s_i$
		does not have the form 
		$as_j^{2^m}+b$ for some $a\in \cO^*=\{1\}$ and $b\in \cO$.	 		On
		the other hand, we can describe the collection $(s_i)_{i\geq 0}$
		by:
		$$s_i=u_is^{4^i}$$
		where the collection $(u_i=x^{1-4^i})_{i\geq 0}$
		consists of certain powers of the element $x$. A similar 
		and more complicated example will be given in Subsection~
		\ref{subsec:example}.
	\end{example} 
	
	This example shows that, in a certain qualitative sense,
	our result below on Problem (A) is optimal (also see Remark~\ref{rem:example}). Similar
	to results by Evertse-Gy\H{o}ry \cite{EG1985},
	our bound depends uniformly on (certain invariants of) 
	the ring $\cO$ and the degree $[K(s):K]$ as follows. By a 
	theorem of 
	Roquette \cite{Roquette}, the unit group $\cO^*$ is
	finitely generated. Let $q(K)$ be the power of
	$p$ such that $K\cap\bar{\bF}_p=\bF_{q(K)}$, hence
	$|\cO^*_{\tors}|=q(K)-1$. Let $V=\Spec(\cO)$
	and fix a choice of a projective normal
	scheme $\bar{V}$ over $\bF_{q(K)}$
	together with an open embedding from $V$
	to $\bar{V}$. Let $M_K$ be the set of discrete
	valuations on $K$ associated to the 
	Weil divisors of $\bar{V}$ (see \cite[pp.~130]{HartshorneAG})
	and let $S$ be the finite subset of $M_K$
	corresponding to the Weil divisors contained in
	$\bar{V}\setminus V$. For $v\in M_K$, let $n_v$
	denote the degree
	of the Weil divisor corresponding $v$ (see \cite{HartshorneAG}). 
	We have the following properties:
	\begin{itemize}
		\item [(i)] For every $a\in K^*$, $v(a)=0$
		for all but finitely many $v\in M_K$ and 
		$\sum_{v\in M_K}n_v v(a)=0$ (base change
		from $\bF_{q(K)}$ to $\bar{\bF}_p$
		and use \cite[Exercise~II.6.2(d)]{HartshorneAG}).
		
		\item [(ii)] $\cO=\{a\in K:\ v(a)\geq 0\ \text{for every
		 $v\in M_K\setminus S$}\}$ (see \cite[pp.~132]{HartshorneAG}).
		 
		 \item [(iii)] $\bF_{q(K)}^*=\{a\in K^*: v(a)=0\ \text{for every $v\in M_K$}\}$ (see \cite[pp.~122]{HartshorneAG}).
	\end{itemize}
	
	For every element $\alpha$
    that is separable over $K$, we define the 
    discriminant of $\alpha$ over $K$ by:
    $$\discr_K(\alpha)=\prod_{1\leq i<j\leq d} 
    (\alpha_i-\alpha_j)^2$$
  where $\alpha_1,\ldots,\alpha_d$ are all the conjugates
  of $\alpha$ over $K$.
    Our first main result provides an answer
    to Problem (A) with a uniform bound on $q(K)$, $|S|$,
    and $[K(s):K]$:
	\begin{theorem}\label{thm:A}
		Let $s$ be integral over $\cO$ and separable over $K$
		of degree $d=[K(s):K]\geq 2$ and let $D=\discr_K(s)$. 
		There are 
		$$N\leq q(K)^{d^6}+
		\left(\exp(18^{10})p^{3d^4|S|}\log_p q(K)\right)^{d^3}$$ 
		elements $t_1,\ldots,t_N$
		satisfying the following conditions:
		\begin{itemize}
			\item [(a)] $\cO[t_i]=\cO[s]$ for $1\leq i\leq N$.
			\item [(b)] If $\cO[t]=\cO[s]$
			then $t=at_i^{q}+b$
			where $1\leq i\leq N$, $q\geq 1$ is
			a power of 
			$p$, $a,b\in K$
			such that $\displaystyle\frac{a^{d(d-1)}}{D^{1-q}}\in\cO^*$.
		\end{itemize}
	\end{theorem}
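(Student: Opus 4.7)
The plan is to adapt the Evertse--Gy\H{o}ry strategy to positive characteristic by reducing the equation $\cO[t]=\cO[s]$ to a system of $T$-unit equations in the Galois closure of $K(s)/K$ and then invoking the characteristic-$p$ unit-equation theorems of Voloch, Derksen, Masser, and Adamczewski--Bell. The key feature is that those theorems classify solutions only up to the Frobenius twist $x\mapsto x^p$, which is exactly the extra flexibility present in conclusion~(b): rather than $t=at_i+b$ we are forced to allow $t=at_i^{q}+b$ with $q$ a power of~$p$.

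First I would pass to the Galois closure $L$ of $K(s)/K$ and label the conjugates compatibly as $s=s_1,\ldots,s_d$ and $t=t_1,\ldots,t_d$. Let $\cO_L$ denote the integral closure of $\cO$ in $L$, and enlarge $S$ to a finite set $T$ of places of $L$ containing all places above $S$ together with all places where some $s_i-s_j$ has nonzero valuation; $|T|$ is then controlled by $|S|$, $d$, and $[L:K]\le d!$. The hypothesis $\cO[t]=\cO[s]$ forces the change-of-basis between $(1,s,\ldots,s^{d-1})$ and $(1,t,\ldots,t^{d-1})$ to be an invertible matrix over $\cO$, so $\discr_K(t)=u^2 D$ for some $u\in\cO^*$, and since $t\in\cO[s]$ and $s\in\cO[t]$ each difference $t_i-t_j$ is an $\cO_L^{*}$-multiple of $s_i-s_j$; consequently every ratio $(t_i-t_j)/(s_k-s_\ell)$ is a $T$-unit. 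The Pl\"ucker-type identity
\[
(t_1-t_2)(t_3-t_4)+(t_2-t_3)(t_1-t_4)=(t_1-t_3)(t_2-t_4)
\]
(valid for $d\ge 4$, with analogous identities for $d=2,3$) then yields a two-variable $T$-unit equation $x+y=1$ satisfied by a cross-ratio of conjugates of $t$.

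The characteristic-$p$ unit-equation theorems assert that every solution $(x,y)\in(\cO_{L,T}^*)^2$ of $x+y=1$ is of the shape $(x_0^{p^n},y_0^{p^n})$ for some $n\in\N_0$ and some $(x_0,y_0)$ in an effectively described finite set whose size is bounded by a quantity of the form $\exp(18^{10})p^{c|T|}$; a separate ``constant'' contribution of shape $q(K)^{O(d^6)}$ accounts for solutions with cross-ratios lying in $\bF_{q(K)}$, and is the source of the $q(K)^{d^6}$ summand. Each Frobenius orbit of admissible cross-ratios determines the tuple $(t_1,\ldots,t_d)$ up to an affine transformation $t\mapsto at_i^{q}+b$ with $q=p^n$; substituting this form into $\discr_K(t)=u^2 D$ and using $\discr_K(t_i^q)=\discr_K(t_i)^q$ converts the discriminant equality into exactly the constraint $a^{d(d-1)}/D^{1-q}\in\cO^*$ of (b). Multiplying the Frobenius-orbit count by $|\cO^*_{\tors}|=q(K)-1$ raised to a controlled power then produces the stated estimate. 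The step I expect to be the main obstacle is not the unit equation itself but the faithful bookkeeping between its solutions and the quadruples $(q,i,a,b)$: the unit-equation theorems parametrize solutions by Frobenius, and a single $t$ may arise from several Frobenius presentations, so one must use property~(i) of $M_K$ (which implies that a $T$-unit is a $p$-th power exactly when each of its $v$-valuations is divisible by $p$) to show that distinct values of $q$ and $i$ correspond to genuinely distinct orbits. The low-degree cases $d=2,3$, where the Pl\"ucker identity breaks down, will require separate but structurally parallel arguments using $2$- or $3$-term unit equations.
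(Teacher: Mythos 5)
Your overall strategy matches the paper's: pass to the Galois closure, observe that $\cO[t]=\cO[s]$ makes each $t_{(i)}-t_{(j)}$ a unit multiple of $s_{(i)}-s_{(j)}$, extract unit equations $x+y=1$ in a finitely generated group, apply the Derksen--Masser-type classification of solutions up to Frobenius, and recover $t=at_i^q+b$ by Galois descent, with the discriminant comparison yielding $a^{d(d-1)}/D^{1-q}\in\cO^*$. (A cosmetic difference: the paper uses the telescoping identity $(t_{(i)}-t_{(j)})+(t_{(k)}-t_{(i)})=(t_{(k)}-t_{(j)})$ for triples, valid for all $d\ge 3$, rather than the Pl\"ucker identity on quadruples, so no separate $d=3$ argument is needed; $d=2$ is trivial.)

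However, there is a genuine gap at the decisive step. You write that ``each Frobenius orbit of admissible cross-ratios determines the tuple $(t_1,\ldots,t_d)$ up to an affine transformation $t\mapsto at_i^q+b$,'' but this is exactly what must be proved and it does not follow from the unit-equation theorem alone. For each triple $(i,j,k)$ you obtain $(t_{(i)}-t_{(j)})/(t_{(k)}-t_{(j)})=x_{m_{i,j,k}}^{p^{a_{i,j,k}}}$ with exponents $a_{i,j,k}$ that a priori vary \emph{independently} over the $d(d-1)(d-2)$ triples; to conclude that a single power $q=p^C$ works for all conjugate differences simultaneously (which is what Galois descent of $a$ and $b$ requires), one must show the exponent differences between a reference solution and an arbitrary solution are synchronized across all triples, up to a finite ambiguity. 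This synchronization is the heart of the paper's proof (Lemma~4.5, Propositions~4.6 and~4.7), and it needs an auxiliary \emph{characteristic-zero} unit-equation argument (Lemma~4.2, on solutions of $Ap^{X_1}-Ap^{X_2}+Bp^{X_3}-Bp^{X_4}=0$) applied to the exponents of $p$, together with a careful case analysis according to whether the relevant base solutions generate a rank-one or rank-two multiplicative group, and separate treatment of the triples whose ratios are algebraic over $\bF_p$ (this last case is what produces the $q(K)^{d^6}$ summand, not a count of torsion as you suggest). The obstacle you do identify --- multiple Frobenius presentations of the same $t$ --- is comparatively minor, and your proposed tool for it (that a $T$-unit is a $p$-th power iff all its valuations are divisible by $p$) is false in general, since the corresponding divisor need not be $p$ times a principal one. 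A related omitted detail: one must choose each representative $t_i$ so that its cross-ratios are not $p$-th powers in $L$ (i.e.\ minimize the Frobenius height within its class) to guarantee the exponent $C$ is non-negative, so that $q=p^C$ is a genuine power of $p$.
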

	
	\begin{remark}\label{rem:example}
		We will prove a slightly more precise result, see
		Theorem~\ref{thm:A precise} and Remark~\ref{rem:A precise}.
		In characteristic zero, we have the form
		$t=at_i+b$ instead of part (b) (see \cite{EG1985} or
		\cite[pp.~6--9]{KN-TAMS2015}) with $a\in \cO^*$ and, hence,
		$b$ must automatically be in $\cO$. On the other hand,
		we will construct an example in Subsection~
		\ref{subsec:example} to show that
		in characteristic $p$, it is not always possible to
		have $t=at_i^q+b$ as in Theorem~\ref{thm:A} with
		the further restriction that $b$ is in $\cO$.
	\end{remark}
	
	It is well-known that the ``monogenic order equation"
	$\cO[t]=\cO[s]$ is closely related to the problem 
    of solving for integral elements with a given 
    discriminant (see, for example, \cite{Gyory1984} 
    and \cite{EG1985}).  Now for the equation
    $\discr_K(t)=\delta$ with a given $\delta$,
    we may consider a more general problem by 
    defining $\tilde{\cO}=\cO[1/\delta]$ and solving for $t$
    that is integral over $\tilde{\cO}$ with $\discr_K(t)\in \tilde{\cO}^*$. This motivates our next result.
    
    For a finite subset $T\subset M_K$
    containing $S$, the set of $T$-integral elements of $K$ is
    defined to be:
    $$\cO_{K,T}:=\{a\in K: v(a)\geq 0\ \text{for every $v\in M_K
    \setminus T$}\}.$$
    In particular $\cO_{K,S}=\cO$. Theorem~\ref{thm:A} immediately
    gives the following:
    \begin{corollary}\label{cor:A}
    	Let $E$ be a separable extension of degree $d$
    	over $K$
    	and let $T$ be a finite subset of $M_K$ containing $S$. 
    	Let $N(d)$ denote the number of subgroups of the 
    	symmetric group $\Sym(d)$. Then there are 
    	$$N\leq
    	N(d) \left(q(K)^{d^6}+
		\left(\exp(18^{10})p^{3d^4|T|}\log_p q(K)\right)^{d^3}\right)$$ 
    	elements
    	$t_1,\ldots,t_N$ in $E$ satisfying the following
    	conditions:
    	\begin{itemize}
			\item [(a)] For $1\leq i\leq N$, $t_i$ is integral over $\cO_{K,T}$ and $\discr_K(t_i)\in \cO_{K,T}^*$.
			\item [(b)] If $t\in E$ is integral
			over $\cO_{K,T}$ and  
			$\discr_K(t)\in \cO_{K,T}^*$
			then $t=at_i^q+b$ where $1\leq i\leq N$, $q\geq 1$
			is a power of $p$, $a\in \cO_{K,T}^*$, 
			and $b\in \cO_{K,T}$.    	
    	\end{itemize}  
    \end{corollary}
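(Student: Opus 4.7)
The plan is to apply Theorem~\ref{thm:A} with $\cO$ replaced by $\tilde\cO:=\cO_{K,T}$ and then enumerate the intermediate fields $K\subseteq E'\subseteq E$. The ring $\tilde\cO$ is again a finitely generated $\bF_p$-algebra that is an integrally closed domain with fraction field $K$, and the analogue of $S$ for $\tilde\cO$ is precisely $T$; in particular $q(K)$ is unchanged, so Theorem~\ref{thm:A} applies verbatim with $(\cO,S)$ replaced by $(\tilde\cO,T)$. For the enumeration of subfields, let $L/K$ denote the Galois closure of $E/K$ and set $G=\Gal(L/K)$, $H=\Gal(L/E)$. The action of $G$ on the coset space $G/H$ (equivalently, on the $d$ embeddings of $E$ into $L$ over $K$) yields a faithful homomorphism $G\hookrightarrow\Sym(d)$, so the number of subgroups of $G$, and hence by the Galois correspondence the number of intermediate fields $K\subseteq E'\subseteq E$, is at most $N(d)$.

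The reduction to Theorem~\ref{thm:A} runs as follows. For any $s$ integral over $\tilde\cO$ with $\discr_K(s)\in\tilde\cO^*$, the finite $\tilde\cO$-algebra $\tilde\cO[s]$ has unit discriminant and is therefore \'etale; being a domain over the normal base $\tilde\cO$, it is itself normal, so it coincides with the integral closure of $\tilde\cO$ in $K(s)$. Consequently, for each intermediate field $E'$ with $[E':K]\geq 2$, either no $s\in E'$ with $K(s)=E'$ and $\discr_K(s)\in\tilde\cO^*$ exists (in which case $E'$ contributes nothing), or such an $s$ exists and every admissible $t\in E'$ with $K(t)=E'$ satisfies $\tilde\cO[t]=\tilde\cO[s]$. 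Theorem~\ref{thm:A} applied to this $s$ then produces at most $q(K)^{[E':K]^6}+\bigl(\exp(18^{10})p^{3[E':K]^4|T|}\log_p q(K)\bigr)^{[E':K]^3}$ representatives in $E'$; this quantity is monotone in $[E':K]\leq d$, so each intermediate field contributes at most the parenthesised quantity appearing in the corollary. The trivial subfield $E'=K$ is covered by the single representative $t=0$. Summing over the at most $N(d)$ intermediate fields yields the asserted bound on $N$.

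To match the form in (b), Theorem~\ref{thm:A}(b) writes $t=at_i^q+b$ with $a^{[E':K]([E':K]-1)}/D^{1-q}\in\tilde\cO^*$, where $D=\discr_K(s)$. Since $D\in\tilde\cO^*$, this simplifies to $a^{[E':K]([E':K]-1)}\in\tilde\cO^*$; evaluating any $v\in M_K\setminus T$ and using $v(a^n)=nv(a)$ forces $v(a)=0$, whence $a\in\tilde\cO^*$. Then $b=t-at_i^q$ is integral over $\tilde\cO$ and lies in $K$, hence in $\tilde\cO$. The step I expect to require the most care is the ``unit discriminant implies equals the integral closure'' implication when $\tilde\cO$ has Krull dimension larger than one; this ultimately rests on the standard fact that a finite \'etale algebra over a normal Noetherian base is itself normal.
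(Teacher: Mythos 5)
Your proposal is correct and follows essentially the same route as the paper: decompose the admissible $t$ according to the intermediate field $K(t)\subseteq E$, bound the number of such fields by $N(d)$ via subgroups of $\Sym(d)$, apply Theorem~\ref{thm:A} over $\cO_{K,T}$ to one representative generator of each field (using that unit discriminant forces $\cO_{K,T}[t]$ to equal the integral closure, so all admissible generators of a fixed subfield yield the same order), and finally upgrade $a\in K^*$ to $a\in\cO_{K,T}^*$ and $b\in K$ to $b\in\cO_{K,T}$ by comparing discriminants and using integrality. The extra details you supply (the \'etale/normality justification, monotonicity of the bound in the degree, and the trivial subfield $E'=K$) are points the paper's proof leaves implicit.
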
	
	
	\begin{remark}
		If $\discr_K(t_i)\in \cO_{K,T}^*$
		and $t=at_i^q+b$ as above then $\discr_K(t)\in\cO_{K,T}^*$.
		Therefore Corollary~\ref{cor:A}
		is optimal (at least qualitatively).
	\end{remark}
	
	We now address Problem (B) where similar
	issues arise due to the Frobenius automorphisms. Note that 
	when $\cha(\cO)=0$,
	the main result of \cite[Theorem~1.3]{KN-TAMS2015} implies that
	the set $\{(m,n):\ \cO[s^m]=\cO[t^n]\}$ is the union of a 
	finite
	set and at most finitely many ``progressions''
	of the form $\{(km_0,kn_0):\ k\in\N\}$
	for some $(m_0,n_0)$. However, when $\cha(\cO)=p$, if
	$\cO[s^m]=\cO[t^n]$ then $\cO[s^{pm}]=\cO[t^{pn}]$; therefore
	infinite sets of the form $\{(p^km_0,p^kn_0):\ k\in\N_0\}$ will 
	arise. 
	We now give further examples that more complicated sets could 
	appear.
	
	For $s$ and $t$
	that are integral over $\cO$ and separable over $K$, we denote:
	$$\cM(\cO,s,t):=\{(m,n)\in\N^2:\ \cO[s^m]=\cO[t^n]\}.$$
	\begin{example}\label{ex:1st}
		Suppose for some $(m_0,n_0)\in\N^2$ and some power $q>1$ of 
		$p$,
		we have
		$\cO[s^{m_0}]=\cO[t^{n_0}]=\cO[t^{qn_0}]$. Then we have:
		$$\{(q^im_0,q^jn_0):\ i,j\in\N_0\}\subseteq \cM(\cO,s,t).$$
	\end{example}
	
	\begin{example}\label{ex:2nd}
    Here is an explicit example where a set of the form
    $$\{(c_1q^i+c_2q^j,c_3q^i+c_4q^j):\ i,j\in \N_0\}$$
	for some $c_1,c_2,c_3,c_4\in\N$ and some power $q>1$
	of $p$ is contained 
	in $\cM(\cO,s,t)$. Note that
	the set in Example~\ref{ex:1st} is a special case in which
	$c_1=m_0$, $c_2=c_3=0$, and $c_4=n_0$. Let $p=7$, $L=\bF_7(x,y)$,
	$\cO=\bF_7[x+y,xy]$, $K=\Frac(\cO)$, $s=x$, $t=3x+2y$. For 
	$i,j\in\N$ and $m=n=7^i+7^j$, it is
	not difficult to show directly that
	$s^m\in \cO[t^n]$ and $t^n\in \cO[s^m]$; in other words
	$(m,n)\in\cM(\cO,s,t)$.
	\end{example}

	We need the following:
	\begin{definition}\label{def:non-degenerate}
	Let $a_1,a_2,a_3,a_4\in\Q$, define:
		$$F(q;a_1,a_2,a_3,a_4):=
		\{(a_1q^i+a_2q^j,a_3q^i+a_4q^j):\
		i,j\in\N_0\}\subset \Q^2.$$
	\end{definition}

	We will obtain a list of
	unit equations from the equation $\cO[s^m]=\cO[t^n]$,
	then
	the sets $F(q;a_1,\ldots,a_4)$ in
	 Definition~\ref{def:non-degenerate}
	correspond to the non-degenerate solutions. Degenerate
	solutions will correspond to the following sets:
	
	\begin{definition}\label{def:degenerate}
		Let $s$ and $t$ be integral over $\cO$ and separable
		over $K$. Define:
		$$\cA_{\cO,s,t}=\{(m,n)\in\N^2:\ \frac{s^m}{t^n}\in\cO^*\},$$
  $$\cB_{\cO,s,t}=\{(m,n)\in\N^2:\ [K(t^n):K]=2\ \text{and 
   }\frac{s^m}{\sigma(t^n)}\in\cO^{*}\}$$
   where $\sigma$ in the definition of $\cB_{\cO,q,r}$ is
   the nontrivial automorphism of the quadratic extension
   $K(t^n)/K$. Finally, we define:
   $$\cC_{\cO,s,t}=\{(m,n)\in\N^2:\ s^mt^n\in\cO^{*}\}.$$
	\end{definition}

	    By the same arguments in \cite[pp.~2--3]{KN-TAMS2015}, 
	    we have that each of
		the sets $\cA(\cO,s,t)$, $\cB(\cO,s,t)$, and  $\cC(\cO,s,t)$
		is a subset of $\cM(\cO,s,t)$.  	
	We can now state the second main result of this paper. As in 
	\cite[pp.~2]{KN-TAMS2015},
	for simplicity
	we \emph{assume the very mild condition that
	$\{s^n,t^n:\ n\in\N\}\cap\cO=\emptyset$.} If $s^n\in \cO$ or $t^n\in \cO$
	for some $n$, Problem (B) becomes much easier  
	and is treated
	in Section~\ref{sec:addendum} (compare \cite[Section~5]{KN-TAMS2015}). 
		
	\begin{theorem}\label{thm:B}
		Let $s$ and $t$ be integral over $\cO$, separable over $K$,
		and satisfy $\{s^n,t^n:\ n\in\N\}\cap\cO=\emptyset$. 
		Then the set 
		$$\cM(\cO,s,t)\setminus \left(\cA(\cO,s,t)\cup\cB(\cO,s,t)\cup\cC(\cO,s,t)\right)$$
		is contained in a finite union of sets of the form
		$$F(q;c_1,c_2,c_3,c_4)=
		\{(c_1q^i+c_2q^j,c_3q^i+c_4q^j):\ i,j\in\N_0\}$$
		for some power $q>1$ 
		of $p$ and  $c_1,c_2,c_3,c_4\in\Q$. 
	\end{theorem}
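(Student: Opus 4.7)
My plan follows the characteristic-zero strategy of \cite{KN-TAMS2015}, reducing $\cO[s^m]=\cO[t^n]$ to a system of $S$-unit equations parametrized by $(m,n)$ and analyzing them with the characteristic-$p$ unit-equation results of Voloch, Masser, Derksen, Adamczewski--Bell and Derksen--Masser cited in the introduction. The central new difficulty is the Frobenius: it appears both in the decomposition $m=p^a m'$, $n=p^b n'$ (with $\gcd(m'n',p)=1$) and in the fact that ``nondegenerate'' solutions of characteristic-$p$ unit equations come in infinite Frobenius-parametrized families rather than finite sets, which is precisely what will produce the sets $F(q;c_1,c_2,c_3,c_4)$.

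First I would reduce to a uniform setup. Since $[K(s^m):K]$ divides $[K(s):K]$ and takes only finitely many values (and similarly for $t^n$), I partition $\N^2$ into finitely many classes on which the common field $L:=K(s^m)=K(t^n)$ is a fixed subfield of $K(s,t)$. Let $\tilde L$ be a Galois closure of $K(s,t)/K$ and enlarge $S$ to a finite set $T'$ so that $s,t$ lie in the integral closure $\cO_{\tilde L,T'}$ of $\cO$ in $\tilde L$. By the standard discriminant-ratio argument underlying Theorem~\ref{thm:A}, the identity $\cO[s^m]=\cO[t^n]$ forces, for every ordered pair of distinct $K$-embeddings $\sigma\ne\tau\colon L\hookrightarrow\tilde L$, the element
$$\frac{\sigma(s)^m-\tau(s)^m}{\sigma(t)^n-\tau(t)^n}$$
to be a $T'$-unit. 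Writing $m=p^a m'$ and $n=p^b n'$ and using $X^{p^c}-Y^{p^c}=(X-Y)^{p^c}$, after passing to a finite extension containing all $m'$-th and $n'$-th roots of unity this rewrites as
$$\prod_{\zeta^{m'}=1}\bigl(\sigma(s)-\zeta\tau(s)\bigr)^{p^a}\;=\;u_{\sigma,\tau}\cdot\prod_{\rho^{n'}=1}\bigl(\sigma(t)-\rho\tau(t)\bigr)^{p^b}$$
inside a fixed finitely generated multiplicative group.

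Next I would unpack these multiplicative relations into additive $S$-unit equations whose coefficients depend only on $s$, $t$ and the chosen roots of unity, while the exponents encode $(m,n)$. Applying the characteristic-$p$ structure theorems, the solution set splits into (a) nondegenerate solutions, where no proper subsum vanishes, and (b) degenerate solutions, where some proper subsum vanishes. The nondegenerate solutions in (a) come in finitely many Frobenius-parametrized families in which two distinguished conjugate-difference factors are independently twisted by powers of a common Frobenius $q$; tracing this structure back to exponents yields pairs $(m,n)$ of the shape $(c_1q^i+c_2q^j,\,c_3q^i+c_4q^j)$, which is exactly $F(q;c_1,c_2,c_3,c_4)$. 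For (b), case analysis of which partial sum vanishes forces a $K$-linear proportionality between $s^m$ and $t^n$, between $s^m$ and the unique other $K$-conjugate of $t^n$ (only possible when $[K(t^n):K]=2$), or between $s^m t^n$ and $1$; combined with the $T'$-unit condition, this identifies the degenerate $(m,n)$ exactly with $\cA(\cO,s,t)$, $\cB(\cO,s,t)$ and $\cC(\cO,s,t)$ respectively. The main obstacle will be step (b): enumerating all possible vanishing-subsum patterns and showing that after descending from $\tilde L$ back to $K$ only these three degenerations survive. A secondary technical point is controlling the interplay between the $p$-parts $(p^a,p^b)$ of $(m,n)$ and the Frobenius parameters $q^i,q^j$ in the families of (a), which must be handled with care to avoid inseparability pitfalls and to keep the number of $F$-families finite.
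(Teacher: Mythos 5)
Your overall architecture (reduce $\cO[s^m]=\cO[t^n]$ to unit equations built from conjugate differences, then split into nondegenerate solutions giving the $F(q;c_1,\ldots,c_4)$ families and degenerate solutions giving $\cA\cup\cB\cup\cC$) matches the paper's Case $K(s^e)=K(t^f)$. But the central technical step — how the parameters $m,n$ enter the unit equation — is wrong as written, and this is a genuine gap rather than a presentational issue.

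You propose to write $m=p^am'$ with $\gcd(m',p)=1$ and factor $\sigma(s)^{m'}-\tau(s)^{m'}=\prod_{\zeta^{m'}=1}(\sigma(s)-\zeta\tau(s))$ ``after passing to a finite extension containing all $m'$-th and $n'$-th roots of unity.'' Since $(m,n)$ ranges over an infinite set, $m'$ and $n'$ range over infinitely many integers prime to $p$, so there is no single finite extension containing all the needed roots of unity; the multiplicative group generated by the factors $\sigma(s)-\zeta\tau(s)$ is not finitely generated; and the number of terms in any additive unit equation you extract from this product grows without bound with $m'$, so none of the quoted structure theorems (Evertse--Schlickewei--Schmidt, Derksen--Masser) applies uniformly. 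The paper avoids all of this by never factoring: it treats $s^m/\sigma(s^m)=(s/\sigma(s))^m$ as a single element of the \emph{fixed} finitely generated group $G$ (the radical of the group generated by $\cO^*$ and the conjugates of $s$ and $t$), so that the relation $s^m-\sigma(s^m)=u_{m,n,\sigma}(t^n-\sigma(t^n))$ becomes a three-term unit equation $x+y+z=1$ in $G^3$ for every $(m,n)$. The exponents $m$ and $n$ are then recovered by applying Proposition~\ref{prop:refined n} and comparing exponents with respect to a basis of $\tilde G/\tilde G_{\tors}$, which is exactly what produces $m,n$ as $\Q$-linear combinations of $p^i$ and $p^j$.

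A second, smaller omission: the paper's proof splits on whether $K(s^e)=K(t^f)$ (with $e,f$ the minimal exponents stabilizing the fields $K(s^n)$, $K(t^n)$). When these fields differ, the three-term analysis is replaced by a projection argument (Propositions~\ref{prop:C1} and~\ref{prop:pi_1 pi_2}) using only the \emph{two}-variable equation $x+y=1$ and Proposition~\ref{prop:refined 2}, showing directly that all of $\cM(\cO,s,t)$ is a finite union of Frobenius and doubly Frobenius sets. Your uniform reduction does not address how the argument closes when no single embedding $\sigma$ simultaneously moves $s^m$ and $t^n$ in the required way, which is precisely the situation that case handles.
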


		Example~\ref{ex:2nd} shows that \emph{in general} 
		we cannot improve 
		Theorem~\ref{thm:B} in the sense that
		the sets $F(q;c_1,c_2,c_3,c_4)$ 
		appearing in 
		$\cM(\cO,s,t)\setminus\
		(\cA(\cO,s,t)\cup\cB(\cO,s,t)\cup\cC(\cO,s,t))$	
		consist of pairs $(m,n)$ where each of $m$ and $n$
		is a linear combination of $q^i$ and $q^j$
		for 2 parameters $i,j\in \N$. 
		While we expect the number of sets 
	    $F(q;c_1,c_2,c_3,c_4)$
	    in Theorem~\ref{thm:B} could be bounded uniformly
	    in terms of $q(K)$, $|S|$ and $[K(s,t):K]$,
	    our proof does not seem to yield this. 
	    The proofs of Theorem~\ref{thm:A},
	    Corollary~\ref{cor:A}, and Theorem~\ref{thm:B}
	    are not effective.

		The organization of this paper is as follows.
		In the next section, we introduce finiteness results
		for unit equations in both zero and positive 
		characteristics. After that, we prove Theorem~\ref{thm:A},
		Corollary~\ref{cor:A}, and Theorem~\ref{thm:B}. The
		last section addresses the easy case
		of Problem (B) when 
		$\{s^n,t^n:\ n\in\N\}\cap \cO\neq\emptyset$.

	{\bf Acknowledgments.} We wish to thank Professors 
	Jan-Hendrik Evertse, Dragos Ghioca, K\'alm\'an Gy\H{o}ry,
	and David Masser  
	for useful discussions. The second author is grateful
	to Professors Evertse and Gy\H{o}ry for answering many
	questions and sharing the draft of their upcoming book on 
	the topics (in characteristic zero) presented in this paper.

	\section{Unit Equations}\label{sec:unit} 
	Let $n\in\N$, a solution $(x_1,\ldots,x_n)$ of the equation
	$a_1X_1+\ldots+a_nX_n=1$ with non-zero parameters $a_i$'s is
	called \emph{non-degenerate} if no subsums vanish. In other words,
	there is no proper subset $\emptyset\neq J\subset\{1,\ldots,n\}$
	such that $\sum_{j\in J} a_jx_j=0$. 
	
	We start with a celebrated result on unit equation in 
	characteristic 
	zero proved by Evertse, Schlickewei, and Schmidt \cite{ESS}:
	
	\begin{proposition}\label{prop:unit char 0}
		Let $L$ be a field of characteristic $0$ and let $G$
		be a finitely generated subgroup of $L^*$ having
		rank $r$. Let $n\in\N$ and
		$a_1,\ldots,a_n\in L^*$, then the number of
		non-degenerate solutions $(x_1,\ldots,x_n)\in G^n$
		of the equation:
		$$a_1X_1+\ldots+a_nX_n=1$$
		is at most $\exp\left((6n)^{3n}(nr+1)\right)$. 
	\end{proposition}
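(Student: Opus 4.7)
The plan is to follow the strategy of Evertse-Schlickewei-Schmidt: first reduce to the case where $L$ is an algebraic number field via a specialization argument, and then invoke a quantitative form of the Schmidt subspace theorem with careful bookkeeping of the resulting induction on $n$.

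First I would replace $L$ by the finitely generated extension $L_0$ of $\Q$ spanned by the coefficients $a_1,\ldots,a_n$ together with a set of generators of $G$ and a common denominator for them, so that everything relevant lies in a finitely generated $\Z$-subalgebra $R\subset L_0$. An Evertse-style specialization lemma then produces a ring homomorphism $\varphi\colon R\to \oQ$ whose restriction to a prescribed finite set of elements of $R^{*}$ is injective. Applying this to a finite set of elements detecting all potential coincidences of candidate solutions and all non-vanishing subsums of candidate solutions, one obtains an injection of the set of non-degenerate solutions of $a_1 X_1+\cdots+a_n X_n=1$ in $G^n$ into the set of non-degenerate solutions of the specialized equation in $\varphi(G)^n$, where $\varphi(G)$ has rank at most $r$ inside the number field $L'=\Frac(\varphi(R))$. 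This reduces the problem to the case where $L$ is a number field.

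Next I would enlarge $\varphi(G)$ so that it is contained in an $S$-unit group $\cO_{L',S}^{*}$ for a finite set of places $S$ of $L'$, where $S$ is obtained by adjoining to the archimedean places only the primes dividing the generators of $\varphi(G)$; the rank is essentially preserved. The core step is then the quantitative Schmidt subspace theorem (in Evertse's formulation) applied to the projective points $(a_1x_1:\cdots:a_nx_n)\in \mathbb{P}^{n-1}(L')$ arising from non-degenerate solutions. The product formula together with the $S$-unit condition forces a certain product of linear forms in the coordinates to take very small $v$-adic values for $v\in S$, which by the subspace theorem confines all but a bounded number of such points to a finite union of proper hyperplanes. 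Each such hyperplane either forces a vanishing subsum, contradicting non-degeneracy, or reduces the problem to a shorter unit equation in at most $n-1$ variables, to which one applies the same bound inductively.

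The main obstacle is producing the precise shape of the bound $\exp\bigl((6n)^{3n}(nr+1)\bigr)$: this requires the sharpest available effective form of Schmidt's subspace theorem, due to Evertse and to Evertse-Schlickewei-Schmidt, and a delicate tracking of the dependence of the number of exceptional subspaces on $n$ and $r$ throughout the induction. The specialization reduction is essentially formal once the specialization lemma is in hand, but controlling the explicit numerical constants through the subspace-theorem step is the genuinely hard part.
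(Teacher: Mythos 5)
The paper offers no proof of this proposition: it is precisely the main theorem of Evertse--Schlickewei--Schmidt \cite{ESS}, applied to the rank-$nr$ subgroup $G^n$ of $(L^*)^n$, and is quoted as a known external input. Your sketch accurately reproduces the top-level architecture of their argument (specialization from a finitely generated field to a number field preserving non-degeneracy and rank, passage to $S$-units, the quantitative Subspace Theorem, induction on $n$), so as a roadmap to the literature it is correct.

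As a proof, however, there is a genuine gap beyond ``tracking constants.'' The bound $\exp\bigl((6n)^{3n}(nr+1)\bigr)$ depends only on $n$ and $r$ --- not on the coefficients $a_i$, not on the degree $[L':\Q]$ of the number field produced by specialization, and not on $|S|$. A direct application of the quantitative Subspace Theorem to the points $(a_1x_1:\cdots:a_nx_n)$ yields a count of exceptional subspaces that a priori depends on $[L':\Q]$ and $|S|$, and, more importantly, the finitely many solutions \emph{not} lying in those subspaces (the solutions of small height) are not controlled by the Subspace Theorem at all. The actual ESS proof splits solutions into large and small: the large ones are handled by the \emph{absolute} (degree-independent) quantitative Subspace Theorem of Evertse--Schlickewei, while the small ones require a separate argument resting on lower bounds for heights of non-torsion points on subvarieties of $\Gmn$ (Zhang / David--Philippon-type results). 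Your outline omits the small-solution case entirely and does not explain how the dependence on $[L':\Q]$, $|S|$, and the $a_i$ is eliminated; these are exactly the points where the theorem is hard. Since the present paper uses the statement purely as a black box, the citation of \cite{ESS} is the intended justification, and any self-contained proof would have to supply the two ingredients above.
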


	Now we consider unit equations in positive characteristic 
	where, as usual, subtle issues arise due to the presence of the 
	Frobenius automorphism.
	For the rest of this section, let $L$ be a  
	field of characteristic $p$ 
	and let $G$ be a subgroup
	of $L^*$. 
	The radical of $G$ in $L$ is defined to be: 
	$$\sqrt[L]{G}:=\{\gamma\in L:\ \gamma^m\in G\ \text{for some $m\in\N$}\}.$$
	\emph{Assume that the group $\sqrt[L]{G}$
	is finitely generated.} When 
	$L$ is finitely generated over $\bF_p$,  finite 
	generation of
	$\sqrt[L]{G}$ is equivalent to that of $G$.
	We have the 
	following result by
	Derksen and Masser:
	
	\begin{proposition}\label{prop:DM-PLMS}
	Let $a_1,\ldots,a_n\in L^*$. Consider the equation $a_1x_1+\ldots+a_nx_n=1$ with $(x_1,\ldots,x_n)\in G^n$. Then there is
	a finite set $\scrS$ (contained in $\bar{L}^*$) such that every non-degenerate 
	solution 
	$(x_1,\ldots,x_n)$ has the form:
	  $$x_k=\alpha_{k,0} \alpha_{k,1}^{p^{i_1}}\ldots \alpha_{k,n-1}^{p^{i_{n-1}}}\ \text{for $k=1,\ldots,n$}$$
	  for some $i_1,\ldots,i_{n-1}\in\N_0$, and 
	  $\alpha_{k,j}\in \scrS$ for $0\leq j\leq n-1$.
	\end{proposition}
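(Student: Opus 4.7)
The plan is to prove this by a Frobenius-descent on the solution set, feeding into the characteristic-zero bound of Proposition~\ref{prop:unit char 0} at the end. I break the argument into three steps.

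First, one reduces to the case where $L$ is finitely generated over $\bF_p$: the finitely many coefficients $a_1,\ldots,a_n$ together with a finite generating set for $\sqrt[L]{G}$ all lie in some finitely generated subfield, so one may replace $L$ by that subfield without loss of generality. If the reduced $L$ happens to be algebraic over $\bF_p$, then $\sqrt[L]{G}$ is finite, every solution already lies in a finite set, and the conclusion holds with all Frobenius exponents set to $0$. Hence we may assume $L$ has positive transcendence degree over $\bF_p$, which gives access to a height machine supported on the places of a projective regular model of $L$.

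Second, the core of the proof is the Frobenius-descent. Fix generators $g_1,\ldots,g_r$ of $\sqrt[L]{G}$ modulo torsion; each coordinate $x_k$ of a solution is encoded by an exponent vector in $\Z^r$ together with a torsion factor, so a non-degenerate solution to $\sum_k a_k x_k = 1$ corresponds to a lattice point on an exponential variety inside $\Gm^n$. The positive-characteristic Mordell--Lang theorem of Hrushovski and its refinements by Moosa--Scanlon and Ghioca (cited in the introduction) then force these lattice points into a finite union of cosets twisted by iterated Frobenius. Concretely, one extracts a finite set of ``base'' solutions $\scrS$ such that every non-degenerate $(x_1,\ldots,x_n)$ is obtained from a base tuple by applying a monomial in the Frobenius endomorphism $\phi(x)=x^p$ coordinatewise, with the Frobenius exponents appearing \emph{linearly} in a common $(n-1)$-tuple $(i_1,\ldots,i_{n-1})$ across all $k$.

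Third, once the Frobenius orbit structure is established, the remaining step is to bound the number of base tuples. After peeling off a maximal Frobenius power from each solution, the resulting ``residual'' exponent patterns have bounded height with respect to the function-field height on $L$; applying Proposition~\ref{prop:unit char 0} to a characteristic-zero lift of the residual equation (or directly exploiting the height bound) yields the finiteness of $\scrS$. The main obstacle is precisely the coordination of Frobenius exponents across the coordinates: a priori each $x_k$ could be twisted by its own Frobenius power, but the linear relation $\sum a_k x_k = 1$ cuts out an $(n-1)$-dimensional hyperplane, and this dimensional constraint forces the parameter space of admissible Frobenius exponents to be parametrized by a single $(n-1)$-tuple $(i_1,\ldots,i_{n-1})$. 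Capturing this uniformity is the content of the Derksen--Masser refinement of earlier work of Voloch and Masser and is the most delicate piece of the argument.
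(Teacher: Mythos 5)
At its core your argument lands in the same place as the paper's: Proposition~\ref{prop:DM-PLMS} is not proved from scratch but is read off from the Derksen--Masser theorem \cite[Theorem~3]{DM2012} --- the paper's entire proof is that citation, plus the observation that the stated form of the $x_k$'s comes from dehomogenizing the sets $(\pi_0,\ldots,\pi_h)_p$ with $h\leq n-1$ appearing there. Your step~2, which you yourself describe as ``the content of the Derksen--Masser refinement,'' is therefore exactly the paper's route, and your step~1 (reduction to $L$ finitely generated over $\bF_p$) is harmless.

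The problem is that the extra material you wrap around this citation does not hold up as independent mathematics. Your step~3 is both unnecessary and unsound: the finiteness of the base set $\scrS$ is already part of the conclusion of \cite[Theorem~3]{DM2012}, and the mechanism you propose --- applying Proposition~\ref{prop:unit char 0} to a ``characteristic-zero lift of the residual equation'' --- does not exist. A unit equation over a field of characteristic $p$ has no characteristic-zero lift whose non-degenerate solutions control the original ones; indeed the whole point of this circle of ideas is that such equations can have infinitely many non-degenerate solutions (organized into Frobenius orbits), which Proposition~\ref{prop:unit char 0} forbids in characteristic zero, so any such ``lift'' argument is structurally doomed. Likewise, your claim in step~2 that the hyperplane being $(n-1)$-dimensional ``forces'' the Frobenius exponents to be coordinated into a single common $(n-1)$-tuple is a heuristic, not an argument: the bound $h\leq n-1$ is a nontrivial output of the Derksen--Masser structure theorem (via their height and specialization machinery), not something one can read off from the dimension of the solution variety. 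None of this invalidates the proposition --- it genuinely does follow from the citation, as the paper records --- but if one tried to fill in steps~2 and~3 of your sketch without simply invoking \cite{DM2012}, the argument would collapse.
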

	\begin{proof}
		This follows from \cite[Theorem~3]{DM2012} where
		the form of the $x_k$'s follows
		from the dehomogenization of points in the set 
		$(\pi_0,\ldots,\pi_h)_p$ (with $h\leq n-1$) given
		in \cite[Theorem~3]{DM2012}. 
%		Since the points
%		$\pi_0,\ldots,\pi_h$ are defined over $\sqrt[L]{G}$
%		(see \cite[Theorem~2]{DM2012}), we could actually
%		require that the set $\scrS$ is contained in 
%		$\sqrt[L]{G}$. 
		Earlier versions
		of this result were obtained by 
		Moosa-Scanlon \cite{MoosaScanlon} and 
		Ghioca \cite{Ghioca-TAMS} (see the
		comments in \cite[pp.~1050]{DM2012}).
	\end{proof}
	
	We use Proposition~\ref{prop:DM-PLMS} to obtain
	the following:
	\begin{proposition}\label{prop:refined n}
		Consider the equation $x_1+\ldots+x_n=1$ with 
		$(x_1,\ldots,x_n)\in G^n$. Then there is a positive
		integer $C$ and a finite
		set $\scrS'$ (contained in $\bar{L}^*$) such that for every non-degenerate solution
		$(x_1,\ldots,x_n)$, we have:
		$$x_k^{p^C}=\alpha_{k,1}^{p^{i_1}}\ldots \alpha_{k,n-1}^{p^{i_{n-1}}}\ \text{for $k=1,\ldots,n$}$$
	  for some $i_1,\ldots,i_{n-1}\in\N_0$, and 
	  $\alpha_{k,j}\in \scrS'$ for $1\leq j\leq n-1$.
	\end{proposition}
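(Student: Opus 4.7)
The plan is to start from Proposition~\ref{prop:DM-PLMS} and eliminate the initial constant factor $\alpha_{k,0}$ by a merging identity, handling the remaining case by pigeonholing on DM--PLMS profiles.

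First, I apply Proposition~\ref{prop:DM-PLMS} to obtain, for every non-degenerate solution, a representation
\[
x_k = \alpha_{k,0}\,\alpha_{k,1}^{p^{i_1}}\cdots\alpha_{k,n-1}^{p^{i_{n-1}}},
\]
with $\alpha_{k,j}\in\scrS$ (a fixed finite subset of $\bar L^*$) and $i_1,\dots,i_{n-1}\in\N_0$. Raising to the $p^C$-th power yields
\[
x_k^{p^C} = \alpha_{k,0}^{p^C}\,\alpha_{k,1}^{p^{C+i_1}}\cdots\alpha_{k,n-1}^{p^{C+i_{n-1}}},
\]
so the task is to express this as a product of only $n-1$ Frobenius-powered factors with bases in a fixed finite set.

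The central combinatorial identity is
\[
\alpha_{k,0}^{p^C}\,\alpha_{k,a}^{p^{C+i_a}} = \bigl(\alpha_{k,0}^{p^{C-i_a}}\alpha_{k,a}^{p^C}\bigr)^{p^{i_a}},
\]
valid whenever $0 \le i_a \le C$. Choosing $a$ so that $i_a = m := \min_j i_j$, this collapses two of the $n$ factors into a single $p^{i_a}$-th power whose base lies in the finite set $\scrS'_{\mathrm{merge}} := \{\alpha^{p^s}\beta^{p^C} : \alpha,\beta\in\scrS,\ 0\le s\le C\}$. Setting $\scrS' := \scrS^{p^C}\cup\scrS'_{\mathrm{merge}}$ then produces the required $(n-1)$-factor representation of $x_k^{p^C}$ whenever $m\le C$.

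The main obstacle is the ``escape'' case $m > C$, in which the merging identity has no valid index. I plan to handle this by partitioning the non-degenerate solutions according to their DM--PLMS profile $(\alpha_{k,j})_{k,j}\in\scrS^{n^2}$, of which there are only finitely many. Within each fixed profile, the equation $\sum_k x_k = 1$ becomes a single algebraic relation on the powers $p^{i_j}$; choosing $C$ larger than all parameters arising from this finite list of relations forces $m\le C$ for all but finitely many admissible tuples $(i_1,\dots,i_{n-1})$. The remaining finite set of exceptional solutions is absorbed by enlarging $\scrS'$ to include their $p^C$-th powers. The most delicate step, and the principal technical hurdle, is to bound the exceptional solutions with $m > C$ quantitatively and uniformly, so that $\scrS'$ stays finite.
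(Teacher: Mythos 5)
Your ``good case'' is fine: when $m=\min_j i_j\le C$, the identity $\alpha_{k,0}^{p^C}\alpha_{k,a}^{p^{C+i_a}}=\bigl(\alpha_{k,0}^{p^{C-i_a}}\alpha_{k,a}^{p^C}\bigr)^{p^{i_a}}$ does compress the $n$ factors to $n-1$ with bases in a fixed finite set, and since the exponents $i_j$ are shared over $k$ the required common-exponent form is preserved. The gap is the ``escape'' case, which is where all of the content of the proposition lies, and your claim there is unsubstantiated and, as literally stated, false. Fixing a profile does \emph{not} force $\min_j i_j\le C$ outside finitely many admissible tuples. For example, in $L=\bF_p(z)$ the triples $x_1=z$, $x_2=(1-z)z^{p^k}$, $x_3=(1-z)^{1+p^k}$ are non-degenerate solutions of $x_1+x_2+x_3=1$ for every $k$, and a perfectly legitimate representation of the shape given by Proposition~\ref{prop:DM-PLMS} for this family has non-torsion translation parts $\alpha_{1,0}=z$, $\alpha_{2,0}=\alpha_{3,0}=1-z$ with both exponent slots tending to infinity (the unused slot is padded with base $1$ and an arbitrary exponent). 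So for every choice of $C$ there are infinitely many solutions in the escape case with one fixed profile, and they cannot be absorbed by a finite enlargement of $\scrS'$. What is actually true --- and what needs proof --- is that whenever the exponents are all large, the translation $\alpha_{k,0}$ is removable for a structural reason, not because the exponents are small; your phrase ``a single algebraic relation on the powers $p^{i_j}$'' yields no bound on $\min_j i_j$, because the solution's own exponent vector varies with the solution and can absorb arbitrarily high powers of $p$.

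The paper resolves precisely this point by a mechanism absent from your sketch. It passes to the free abelian group $\Gamma/\Gamma_{\tors}$ generated by $\scrS$, applies Proposition~\ref{prop:DM-PLMS} to all Frobenius twists $(x_1^{p^m},\dots,x_n^{p^m})$ of the given solution, pigeonholes to obtain a single profile for infinitely many $m$, and reads off the exponent relations $p^mb_{k,\ell}=e_{k,0,\ell}+e_{k,1,\ell}p^{i_{1,m}}+\dots+e_{k,n-1,\ell}p^{i_{n-1,m}}$. Proposition~\ref{prop:unit char 0} (the characteristic-zero unit equation applied to powers of $p$ in $\Q$) then forces the constant term $e_{k,0,\ell}$ to cancel against a vanishing subsum for all but finitely many $m$ --- this is how the translation is eliminated --- and Lemma~\ref{lem:e_iu_i} supplies the uniform constant $C$ making the resulting exponents non-negative. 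An argument of this strength (or an equivalent one) is required to close your escape case; without it the proposal does not prove the proposition.
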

	
	In other words, this says that 
	for every non-degenerate solution of the equation $x_1+\ldots+x_n=1$, after raising
	to some $p^C$-th power, 
	 we can 
	omit the ``translation by 
	$(\alpha_{1,0},\ldots,\alpha_{n,0})$''
	in the description given in Proposition~\ref{prop:DM-PLMS}. We
	refer the readers to Question~\ref{q:C=0} for further 
	refinement in this direction. To
	prove Proposition~\ref{prop:refined n}, we need 
	the following simple lemma:
	
	\begin{lemma}\label{lem:e_iu_i}
	Let $N\geq 1$ and let $e_1,\ldots,e_N$ be integers. 
	There exist $C_1$
	depending on $N$ and the $e_i$'s 
	such that the following holds. For every $u_1,\ldots,u_N\in\Z$
	satisfying two conditions:
	\begin{itemize}
		\item [(i)] $\sum_{i=1}^N e_ip^{u_i}\in \Z\setminus\{0\}$,
		\item [(ii)] there does not exist a non-empty proper
		subset $J\subset\{1,\ldots,N\}$
		such that $\sum_{i\in J} e_ip^{u_i}=0$,
	\end{itemize}
	we have that $u_i+C_1\geq 0$ for every $1\leq i\leq N$.
	\end{lemma}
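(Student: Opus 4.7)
The plan is to induct on $N$, exploiting $p$-adic cancellations to reduce $N$. Introduce $a_i := v_p(e_i)$, $f_i := e_i/p^{a_i}$ (so $p \nmid f_i$), and $w_i := u_i + a_i$, so that $e_i p^{u_i} = f_i p^{w_i}$. Since $u_i = w_i - a_i$, it suffices to bound $W := \min_i w_i$ from below by a constant depending only on $N$ and the $e_i$'s. Observe also that (ii) forces every $e_i \neq 0$: the singleton $\{i\}$ with $e_i = 0$ would be a vanishing proper subsum when $N \geq 2$, and $N = 1$ with $e_1 = 0$ contradicts (i). Hence all $f_i$ are nonzero integers coprime to $p$, and we may write $k := \sum_{i=1}^N e_i p^{u_i} \in \Z \setminus \{0\}$.

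The base case $N = 1$ follows directly from $e_1 p^{u_1} \in \Z \setminus \{0\}$, which yields $u_1 \geq -a_1$. For the inductive step, set $I := \{i : w_i = W\}$ and $g := \sum_{i \in I} f_i \in \Z$, noting that $g$ depends only on the set $I$. If $p \nmid g$, then $v_p(\sum_{i \in I} e_i p^{u_i}) = W$ while each term $e_i p^{u_i}$ with $i \notin I$ has $v_p \geq W+1$; comparing with $v_p(k) \geq 0$ forces $W \geq 0$. If instead $g = 0$, the subsum $\sum_{i \in I} e_i p^{u_i} = p^W g$ vanishes, contradicting (ii) when $I \subsetneq \{1,\ldots,N\}$ or (i) when $I = \{1,\ldots,N\}$. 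Thus we may assume $g \neq 0$ with $g = p^b h$, $b \geq 1$, $p \nmid h$; moreover $|I| \geq 2$, since otherwise $g = f_{i_0}$ would be coprime to $p$.

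Combining the $I$-indices into one term gives the reduced equation $h p^{W+b} + \sum_{i \notin I} e_i p^{u_i} = k$, with $N - |I| + 1 \leq N - 1$ terms. Properties (i) and (ii) transfer to this system: any vanishing subsum---depending on whether it contains the combined term---pulls back either to a subsum $I \cup J'$ or to a subsum $J \subseteq I^c$ of the original, each of which is a non-empty proper subset of $\{1,\ldots,N\}$ with vanishing sum, contradicting the original (ii). Applying the inductive hypothesis yields $W + b \geq -C'$ (the combined term's $w$-value is $W + b$, since $p \nmid h$) and hence $W \geq -C' - b$. As $b \leq \log_p |g| \leq \log_p \sum_j |f_j|$ is controlled by the original data, we obtain the required bound on $W$, and hence on each $u_i$.

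The only subtle point is uniformity of the inductive constant $C'$. Although the reduced tuple $(h, (e_i)_{i \notin I})$ varies with the solution, it does so only through the set $I$, because $g$, $b$, $h$ are all functions of $I$ alone. There are only $2^N - 1$ possibilities for the non-empty set $I$, so taking the maximum of the inductive constants over this finite family gives a uniform $C_1$ depending only on $N$ and the $e_i$'s, completing the argument.
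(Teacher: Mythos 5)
Your proof is correct, but it takes a genuinely different route from the paper's. The paper argues from the archimedean side: since $|\sum_i e_ip^{u_i}|\geq 1$, the quantity $\max_i u_i$ is bounded below by $-\log_p(\sum_i|e_i|)$; one then deletes the term achieving the maximum, multiplies the remaining $N-1$ terms by $p^{C_2}$ to restore integrality and non-vanishing (using condition (ii) for the complement), and applies the induction hypothesis to that shorter sum. You instead work non-archimedeanly from the bottom: after normalizing $e_ip^{u_i}=f_ip^{w_i}$ with $p\nmid f_i$, you look at the set $I$ of indices of minimal $w_i$; if $\sum_{i\in I}f_i$ is prime to $p$ the valuation of the whole sum equals $W$ and integrality forces $W\geq 0$, if it vanishes you contradict (i) or (ii), and otherwise you merge the level-$W$ block into a single term (absorbing the $p$-adic carry $p^b$) and induct on the number of terms. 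Both arguments are complete; your Case~1/Case~2 dichotomy and the observation that the merged coefficient $h$ and shift $b$ depend only on the finite datum $I$ (so the inductive constants can be maximized over the $2^N-1$ choices of $I$) are exactly the points that needed care, and you handle them correctly. The paper's version is shorter because the archimedean bound controls the top exponent in one line; yours stays entirely within $p$-adic bookkeeping and makes the eventual constant easy to trace (roughly $\max_i \ord_p(e_i)$ plus accumulated $\log_p$ of coefficient sums along the recursion), at the cost of the extra carrying analysis.
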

	\begin{proof}
	Induction on $N$, the case $N=1$ gives that 
	$u_1+\ord_p(e_1)\geq 0$. Now let $N\geq 2$ and assume that 
	the lemma holds
	for every smaller value of $N$.
	 Since $|\sum_{i=1}^N e_ip^{u_i}|\geq 1$,
	there is a lower bound $-C_2$ on $\max\{u_1,\ldots,u_N\}$.
	Say $u_N=\max\{u_i\}_i$, then apply the induction
	hypothesis for
	$e_1p^{u_1+C_2}+\ldots+e_{n-1}p^{u_{N-1}+C_2}$.
	\end{proof}
	
	\begin{remark}\label{rem:e_iu_i}
	When some $e_i$ in Lemma~\ref{lem:e_iu_i}
	is zero, the lemma is vacuously true
	for any choice $C_1$ since there do not exist
	$u_1,\ldots,u_N$ satisfying the conditions (i) and (ii).
	If this is the case, we will simply choose $C_1=0$.
	\end{remark}
	
	\begin{proof}[Proof of Proposition~\ref{prop:refined n}]
		Let $\scrS$ be a finite set 
		satisfying the conclusion of 
		Proposition~\ref{prop:DM-PLMS} for the
		unit equation $x_1+\ldots+x_n=1$. Let $\Gamma$
		be the group generated by $\scrS$
		and let $r$ denote its rank. For every
		$x\in \Gamma$, let $\bar{x}$ denote its image in
		$\Gamma/\Gamma_{\tors}$. Let $g_1,\ldots,g_r\in \Gamma$ 
		such that $\{\bar{g}_i:\ 1\leq i\leq r\}$
		is a
		basis of $\Gamma/\Gamma_{\tors}$. Let $E_1,\ldots,E_r$
		be the functions from $\Gamma$ to $\Z$ satisfying:
		$$\bar{x}=\prod_{\ell=1}^r \bar{g}_{\ell}^{E_{\ell}(x)}$$
		for every $x\in \Gamma$. Define: 
		\begin{equation}\label{eq:tilde scrS}
		\tilde{\scrS}:=\left\{\prod_{\ell=1}^r g_{\ell}^{d_{\ell}}:\  
		d_{\ell}\in \{0\}\cup E_{\ell}(\scrS)\ \text{for $1\leq \ell\leq r$}\right\}. 
		\end{equation}
		Then the desired set
			$\scrS'$ is defined as follows:
		\begin{equation}\label{eq:scrS'}	
		\scrS':=\{\alpha\beta:\ \alpha\in \Gamma_{\tors},\ 
			\beta\in\tilde{\scrS}\}.
		\end{equation}
		
		Let $(x_1,\ldots,x_n)$ be a non-degenerate solution
		of the given unit equation. For every $m\in \N_0$,
		$(x_1^{p^m},\ldots,x_n^{p^m})$
		is also a non-degenerate solution. By the
		definition of $\scrS$, we have:
		\begin{equation}\label{eq:p^m}
		x_k^{p^m}=\alpha_{k,0,m}\alpha_{k,1,m}^{p^{i_{1,m}}}\ldots \alpha_{k,n-1,m}^{p^{i_{n-1,m}}}\ \text{for $k=1,\ldots,n$
		and for $m\in \N_0$}
		\end{equation}
	  for some $i_{1,m},\ldots,i_{n-1,m}\in\N_0$, and 
	  $\alpha_{k,j,m}\in \scrS$ for $0\leq j\leq n-1$. Since
	  $\scrS$ is finite, we may assume that \eqref{eq:p^m}
	  holds for infinitely many $m$ for \emph{one} tuple
	  $(\alpha_{k,j})$. In other words, there
	  is an infinite subset $\cM_1$ of $\N_0$
	  such that:
	  \begin{equation}\label{eq:m in cM1}
		x_k^{p^m}=\alpha_{k,0}\alpha_{k,1}^{p^{i_{1,m}}}\ldots \alpha_{k,n-1}^{p^{i_{n-1,m}}}\ \text{for $k=1,\ldots,n$
		and for $m\in \cM_1$}
		\end{equation}
	  for some $i_{1,m},\ldots,i_{n-1,m}\in\N_0$, and 
	  $\alpha_{k,j}\in \scrS$ for $0\leq j\leq n-1$. Write:
	  \begin{equation}\label{eq:xk bkl}
	  \bar{x}_k=
	  \prod_{\ell=1}^r \bar{g}_{\ell}^{b_{k,\ell}}\ \text{for
	  $1\leq k\leq n$}.
	  \end{equation}
	\begin{equation}\label{eq:alphakj ekjl}	
	\bar{\alpha}_{k,j}=\prod_{\ell=1}^r g_{\ell}^{e_{k,j,\ell}}\ 
	\text{for $1\leq k\leq n$ and $0\leq j\leq n-1$}.
	\end{equation}
	Therefore
	$p^mb_{k,\ell}=e_{k,0,\ell}+e_{k,1,\ell}p^{i_{1,m}}+\ldots+e_{k,n-1,\ell}p^{i_{n-1,m}}$ for $1\leq k\leq n$
	and $m\in\cM_1$. 
	
	For each $1\leq k\leq n$ and $1\leq \ell\leq r$, we claim that 
	for all but finitely many $m\in\cM_1$, $p^mb_{k,\ell}$ is a 
	(not necessarily proper) subsum
	of $e_{k,1,\ell}p^{i_{1,m}}+\ldots+e_{k,n-1,\ell}p^{i_{n-1,m}}.$
	Indeed, there is nothing to prove when $e_{0,\ell}=0$; when
	$e_{0,\ell}\neq 0$, this follows from
	Proposition~\ref{prop:unit char 0}. 
	We now exclude those finitely many $m$'s from $\cM_1$ as in the 
	claim and
	let $\cM_2$ denote the resulting infinite set.

	Let $\Lambda$ be the set of pairs $(k,\ell)$
	with $1\leq k\leq n$ and $1\leq \ell\leq r$ such
	that $b_{k,\ell}\neq 0$.
	For every $(k,\ell)\in\Lambda$ and for
	every (non-empty) subset $J$ of $\{1,\ldots,n-1\}$
	consider the set $M(k,\ell,J)\subseteq \cM_2$ of $m$ satisfying 
	the 
	following conditions:
	\begin{itemize}
		\item [(i)] $p^mb_{k,\ell} =\displaystyle\sum_{j\in J} e_{k,j,\ell}p^{i_{j,m}}$.
		\item [(ii)] The sum $\displaystyle\sum_{j\in J} e_{k,j,\ell}p^{i_{j,m}}$ has no vanishing proper subsum. 
	\end{itemize}
	By the above claim, we have:
	$\bigcup_{J} M(k,\ell,J) =\cM_2$
	where $J$ ranges over all the non-empty subsets
	of $\{1,\ldots,n-1\}$. Obviously, this gives:
	$$\bigcap_{(k,\ell)\in \Lambda} 
	\left(\bigcup_J M(k,\ell,J)\right)=\cM_2.$$
	Therefore it is possible to choose a non-empty subset
	$\scrJ_{k,\ell}$ of $\{1,\ldots,n-1\}$
	for each $(k,\ell)\in\Lambda$
	such that the set
	$$\cM_3:=\bigcap_{(k,\ell)\in \Lambda} 
	M(k,\ell,\scrJ_{k,\ell})$$
	is infinite.

	Pick one $m'\in \cM_3$. By the definition of $\cM_3$
	and the sets $M(k,\ell,\scrJ_{k,\ell})$, we have the following:
	\begin{itemize}
		\item [(i)] $b_{k,\ell} 
		=\displaystyle\sum_{j\in \scrJ_{k,\ell}} e_{k,j,\ell}p^{i_{j,m'}-m'}$ for every $(k,\ell)\in\Lambda$.
		\item [(ii)] The sum $\displaystyle\sum_{j\in \scrJ_{k,\ell}} e_{k,j,\ell}p^{i_{j,m'}-m'}$ has no vanishing proper subsum
		for every $(k,\ell)\in\Lambda$. 
	\end{itemize}
	
	Now we let $\Omega$
	range over all non-empty
	subset of $\{1,\ldots,n\}\times \{1,\ldots,r\}$,
	let $(J_{k,\ell})_{(k,\ell)\in \Omega}$
	range over all possible
	$|\Omega|$-tuple
	of non-empty subsets of $\{1,\ldots,n-1\}$, and let $C$ be
	the maximum of all the $C_1$'s
	obtained when applying Lemma~\ref{lem:e_iu_i}
	for the tuples $(e_{k,j,\ell})_{j\in J_{k,\ell}}$ 
	for $(k,\ell)\in \Omega$
	(see Remark~\ref{rem:e_iu_i}). 
	
	We have:
	\begin{equation}\label{eq:p^Cbkl}
	p^Cb_{k,\ell}=\displaystyle\sum_{j\in \scrJ_{k,\ell}} e_{k,j,\ell}p^{i_{j,m'}-m'+C}=\displaystyle\sum_{j=1}^{n-1} f_{k,j,\ell}p^{i_{j,m'}-m'+C} 
	\end{equation}
	for every $1\leq \ell\leq r$
	where $f_{k,j,\ell}:=e_{k,j,\ell}$ 
	if $(k,\ell)\in\Lambda$ and $j\in \scrJ_{k,\ell}$;
	otherwise $f_{k,j,\ell}:=0$. This implies
	that $f_{k,j,\ell}\in \{0\}\cup E_{\ell}(\scrS)$
	for every $1\leq k\leq n$, $1\leq j\leq n-1$,
	and $1\leq \ell\leq r$. Hence the element:
	$$\beta_{k,j}:=\prod_{\ell=1}^r g_{\ell}^{f_{k,j,\ell}}\   \text{belongs to $\tilde{\scrS}$ for $1\leq k\leq n$ and $1\leq j\leq n-1$}.$$
	Let $i'_j=i_{j,m'}-m'+C$ which is non-negative for $1\leq j\leq n-1$ by the definition of $C$. By \eqref{eq:xk bkl},
	\eqref{eq:p^Cbkl}, and the definition of the 
	$\beta_{k,j}$'s, we have:
	$$x_k^{p^C}\equiv \beta_{k,1}^{p^{i'_1}}\ldots\beta_{k,n-1}^{p^{i'_{n-1}}}\ \text{in $\Gamma/\Gamma_{\tors}$}$$
	for $1\leq k\leq n$. Since $\Gamma_{\tors}$
	is a finite cyclic group whose order is
	relatively prime to $p$, for $1\leq k\leq n$ there
	is $\zeta_k\in\Gamma_{\tors}$
	such that:
	$$x_k^{p^C}=(\zeta_k\beta_{k,1})^{p^{i'_1}}\beta_{k,2}^{p^{i'_2}}\ldots\beta_{k,n-1}^{p^{i'_{n-1}}}.$$
	This shows that the pair $(C,\scrS')$ satisfies the
	desired conclusion.
	\end{proof}
	
	When $n=2$, we have a more precise result:
	\begin{proposition}\label{prop:refined 2}
		Let $r$ denote the rank of $G$ and consider the 
		equation $x+y=1$ with $(x,y)\in G^2$.  We have: 
		\begin{itemize}
		\item [(a)] There exists a finite subset $\scrX$
		of $L^*\times L^*$ of size at most $p^{2r}-1$ 
		such that every solution $(x,y)\in G^2\setminus \bar{\bF}_p^2$ has the form
		$x=x_0^{p^k}$ and $y=y_0^{p^k}$
		for some $(x_0,y_0)\in \scrX$ and $k\in \N_0$.
		
		\item [(b)] There exists a finite subset $\scrX'$
		of $L^*\times L^*$ of size at most $p^{2r}$ 
		such that every solution $(x,y)\in G^2$ has the form
		$x=x_0^{p^k}$ and $y=y_0^{p^k}$
		for some $(x_0,y_0)\in \scrX'$ and $k\in \N_0$.
		\end{itemize}
	\end{proposition}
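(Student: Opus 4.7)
The plan is a Frobenius descent based on a rigidity lemma specific to $n=2$, which makes the counting much sharper than what applying Proposition~\ref{prop:refined n} to $n=2$ would give.

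First I would establish the rigidity lemma: if $(x,y),(x',y')\in L^*\times L^*$ both satisfy $X+Y=1$ and $x/x',y/y'\in L^p$, then either $(x,y)=(x',y')$ or both pairs lie in $L^p\times L^p$. Writing $x=x'u^p$, $y=y'v^p$ and subtracting from $x'+y'=1$, the characteristic-$p$ identity $a^p-1=(a-1)^p$ gives $x'(u-1)^p+y'(v-1)^p=0$. Either $u=v=1$ (giving equality), or both $u\ne 1$ and $v\ne 1$, in which case setting $w=(1-v)/(u-1)$ solves $x'=(w/(1+w))^p$ and $y'=(1/(1+w))^p$, so $x', y'\in L^p$ and hence $x, y\in L^p$ as well.

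Next I would introduce the $p$-saturation $\widehat G:=\{z\in L^*: z^{p^k}\in G\text{ for some }k\ge 0\}$. It is a subgroup of $\sqrt[L]{G}$, hence finitely generated; the quotient $\widehat G/G$ is $p$-primary torsion; and since $L^*$ has no $p$-torsion we get $\widehat G\cong \widehat G_{\tors}\oplus \Z^r$ with $|\widehat G_{\tors}|$ prime to $p$, so $|\widehat G/\widehat G^p|=p^r$, and moreover $L^p\cap \widehat G=\widehat G^p$. For part (a), given any $(x,y)\in G^2\setminus \bar{\bF}_p^2$ with $x+y=1$, I would iteratively pass to the unique pair of $p$-th roots in $L$ as long as both coordinates lie in $L^p$; by Frobenius injectivity the new pair still satisfies $X+Y=1$ and by $p$-closedness of $\widehat G$ remains in $\widehat G^2$. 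The descent must terminate: otherwise $x$ and $y$ would be infinitely $p$-divisible in $\widehat G$, forcing their free parts to vanish (since $\bigcap_k p^k\Z^r=0$), so $x,y\in \widehat G_{\tors}\subset \bar{\bF}_p^*$, contradicting the exclusion of $\bar{\bF}_p^2$. The terminal pair $(x_0,y_0)\in \widehat G^2$ is then $p$-primitive, i.e.\ $(x_0,y_0)\notin L^p\times L^p$, and gives $(x,y)=(x_0^{p^k},y_0^{p^k})$. I take $\scrX$ to be the set of such primitive pairs; by the rigidity lemma, any two elements of $\scrX$ with the same image in $(\widehat G/\widehat G^p)^2$ coincide, and the image $(\bar 0,\bar 0)$ cannot occur (it would force $(x_0,y_0)\in L^p\times L^p$), so $|\scrX|\le p^{2r}-1$.

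For part (b), the uncovered solutions are precisely those on which descent never terminates, which we have just seen must lie in $\widehat G_{\tors}^2\cap\{X+Y=1\}\subset(\bar{\bF}_p^*\cap L)^2$. Since the $p$-power map is a bijection on this finite torsion set, the forward Frobenius orbit of a single representative covers its entire orbit, so I would augment $\scrX$ by one representative per Frobenius orbit of such torsion solutions to obtain $\scrX'$. The main obstacle I anticipate is the combinatorial bookkeeping needed to keep $|\scrX'|\le p^{2r}$: the single image slot $(\bar 0,\bar 0)\in (\widehat G/\widehat G^p)^2$ not used by primitive solutions must accommodate the torsion-orbit representatives, and this requires carefully balancing the number of primitive images actually realized against the number of Frobenius orbits appearing among torsion solutions.
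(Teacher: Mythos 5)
Your part (a) is correct and is in substance the same argument as the paper's. Your rigidity lemma (two solutions that agree coordinatewise modulo $p$-th powers are equal unless both lie in $L^{\langle p\rangle}\times L^{\langle p\rangle}$) is exactly the paper's observation that for coset representatives $\epsilon_i,\epsilon_j$ of $H/H^p$ with $(i,j)\neq(0,0)$ there is at most one solution in $H^p\epsilon_i\times H^p\epsilon_j$; the paper works with $H=\sqrt[L]{G}$ where you work with the $p$-saturation $\widehat G$, but both have rank $r$, so the count $p^{2r}-1$ comes out the same, and your termination argument for the descent matches the paper's ``largest $m$ with $(x,y)\in H^{p^m}\times H^{p^m}$.'' The only cosmetic difference is that the paper afterwards replaces each pair by its smallest Frobenius power landing in $G^2$; since $\scrX$ is only required to lie in $L^*\times L^*$, this does not affect the correctness of your version of (a).

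The gap you flag in part (b) is genuine and, as far as I can see, cannot be closed with the stated bound: covering the solutions with both coordinates in $\bar{\bF}_p$ requires one pair per forward Frobenius orbit, and the number of such orbits need not be $1$. Already for $G=\bF_q^*$ (so $r=0$ and the claimed bound is $p^{2r}=1$) there are $q-2$ solutions of $x+y=1$ in $G^2$, partitioned into Frobenius orbits of size at most $\log_p q$, so no single pair can cover them all once $q\geq 5$. You should know that the paper's own proof of (b) stumbles at precisely this point: it asserts that for a fixed generator $\gamma$ of $\bF_q^*$ every solution in $(\bF_q^*)^2$ has the form $(\gamma^{p^k},(1-\gamma)^{p^k})$, which is false (over $\bF_p$ the Frobenius orbit of $(\gamma,1-\gamma)$ is a single point while there are $p-2$ solutions). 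So your hesitation is well placed; what your argument actually proves is (b) with ``$p^{2r}$'' replaced by ``$p^{2r}-1$ plus the number of Frobenius orbits of solutions in $(G\cap\bar{\bF}_p^*)^2$,'' which is still finite and is all that the qualitative applications later in the paper genuinely require, but the quantitative bound $p^{2r}$ in the statement is not attainable.
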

	\begin{proof}
	We prove (a) first. Write $H=\sqrt[L]{G}$ which is finitely 
	generated by our 
	assumption.  Note that $H/G$ is a torsion abelian group and 
	so the rank of $H$ 
	is $r$.  We first consider solutions to the 
	equation $x+y=1$ with $(x,y)\in H\times H$.  We have that 
	$H\cong \mathbb{Z}^r \times \mathbb{F}_q^*$ where  
	$\mathbb{F}_q=L\cap\bar{\bF}_p$.  Since the Frobenius map is 
	surjective on $\mathbb{F}_q^*$ we have that $H/H^p\cong 
	\left(\mathbb{Z}/p\mathbb{Z}\right)^r$.  Let 
	$1=\epsilon_0,\ldots ,\epsilon_{p^r-1}$ be a set of coset 
	representatives for $H/H^p$.  Let 
	$(i,j)\in \{0,1,\ldots ,p^r-1\}^2$ with $(i,j)\neq (0,0)$.  
	We now consider all solutions to the equation $x+y=1$ with 
	$(x,y)\in H^p\epsilon_i \times H^p\epsilon_j$.  
	Observe that at least one of $\epsilon_i,\epsilon_j$ cannot be 
	in $L^{\langle p\rangle}:= \{a^p \colon a\in L\}$. Otherwise, 
	we would have 
	$\epsilon_i =a^p$ and $\epsilon_j=b^p$ for some $a,b\in L$.  
	Since $H=\sqrt[L]{G}$ we would then have 
	$a,b\in H$ and so $\epsilon_i,\epsilon_j\in H^p$ contradicting
	our choice that $(i,j)\neq (0,0)$.  
	
	We assume that 
	$\epsilon_i\not\in L^{\langle p \rangle}$, the case when $\epsilon_j\not\in L^{\langle p\rangle}$ 
	can be handled similarly.  Then the sum
	$L^{\langle p\rangle}+ L^{\langle p\rangle}\epsilon_i$ is 
	direct.  On the other hand, 
	if 
	$L^{\langle p\rangle}+ 
	L^{\langle p\rangle}\epsilon_i+ L^{\langle p\rangle}\epsilon_j$ 
	is direct then there cannot be any solutions to the equation 
	$x+y =1$ with $(x,y)\in H^p \epsilon_i \times H^p \epsilon_j$.  
	Thus it suffices to consider the case when 
	$\epsilon_j = a^p + b^p \epsilon_i$ with $a,b\in L$.  
	Note that the pair $(a,b)$, if exists, is unique.
	Write $x=x_1^p\epsilon_i$ and $y=y_1^p\epsilon_j$, then
	the equation $x+y=1$ gives:
	$$x_1^p\epsilon_i+y_1^p\epsilon_j=(ay_1)^p+(x_1^p+b^py_1^p)
	\epsilon_i=1.$$
	This gives $ay_1=1$ and $x_1+by_1=0$ since $L^{\langle p\rangle}+ L^{\langle p\rangle}\epsilon_i$ is 
	direct. Therefore $(x_1,y_1)$ and, hence, $(x,y)$
	are uniquely determined. Overall, we have at most $p^{2r}-1$
	solutions to the equation $x+y=1$ with  $(x,y)\in H\times H$ 
	and $(x,y)\not \in H^p\times H^p$.  Let $M\leq p^{2r}-1$	
	and let $(x_i,y_i)\in H\times H$ with $i=1,\ldots ,M$ 
	denote the collection of all such solutions.  Then if 
	$(x,y)\in H\times H$ is a solution to $x+y=1$ with $x$ and $y$ 
	not algebraic over $\mathbb{F}_p$ then there is some largest 
	$m$ such that $(x,y)\in H^{p^m}\times H^{p^m}$.  Therefore 
	there must exist some $i\in\{1,\ldots,M\}$ such 
	that $x=x_i^{p^m}$ and $y=y_i^{p^m}$.  
	
	We now consider the solutions $(x,y)$ in $G\times G$.  
	For each $i\in \{1,\ldots ,M\}$, consider the set $N_i$ of all 
	nonnegative integers $n$ for which $x_i^{p^n}$ and $y_i^{p^n}$ 
	are both in $G$.  This set is either empty or has some least 
	element $n_i$.  Letting $I$ denote the set of 
	$i\in \{1,\ldots ,M\}$ for which $N_i$ is nonempty and then 
	letting $\scrX=\{(x_i^{p^{n_i}},y_i^{p^{n_i}})\colon i\in I\}$ 
	we 
	obtain the desired conclusion for solutions to 
	$x+y=1$ with $(x,y)\in G\times G$.
	
	For part (b), we fix a 
	generator
	$\gamma$ of $\bF_q^*$. Then every solution
	$(x,y)\in (\bF_q^*)^2$ of $x+y=1$ has the form 
	$(x=\gamma^{p^k},y=(1-\gamma)^{p^k})$ for some 
	$k\in \N_0$. Let $n$ be the smallest 
	integer in $\N_0$ such that 
	$(\gamma^{p^n},(1-\gamma)^{p^n})\in G^2$
	\emph{if there exists such an $n$},
	then define
	$$\scrX':=\scrX\cup\{(\gamma^{p^n},(1-\gamma)^{p^n})\}.$$
	Otherwise, if such an $n$ does not exist, define 
	$\scrX':=\scrX$.
	\end{proof}
	
	In view of Proposition~\ref{prop:refined n} and 
	Proposition~\ref{prop:refined 2}, we ask the 
	following question:
	\begin{question}\label{q:C=0}
	Consider the equation $x_1+\ldots+x_n=1$ with
	$x_1,\ldots,x_n\in G$. Is it true that there is a finite
	set $\scrS'\subset L^*$ whose size is bounded only in terms of
	$n$, the rank, and torsion of $\sqrt[L]{G}$
	such that every non-degenerate solution $(x_1,\ldots,x_n)$
	has the form:
	$$x_k=\alpha_{k,1}^{p^{i_1}}\ldots \alpha_{k,n-1}^{p^{i_{n-1}}}\ \text{for $k=1,\ldots,n$}$$
	  for some $i_1,\ldots,i_{n-1}\in\N_0$, and 
	  $\alpha_{k,j}\in \scrS'$ for $0\leq j\leq n-1$.
	\end{question}

%	The case of unit equations in three (or more) variables is not 
%	as neat. We will use the following:
%	\begin{proposition}\label{prop:3 variables}
%	Let $a,b,c\in L^*$. There exist a power $q$ of $p$ and 
%		finite subsets $\scrX_0,\scrX_1,\scrX_2,\scrY_0,\scrY_1,
%		\scrY_2,\scrZ_0,\scrZ_1,\scrZ_2$
%		of $L^*$ such that every nondegenerate solution 
%		$(x,y,z)\in G^3$
%		of the equation $ax+by+cz=1$
%		has the form: $x=x_0x_1^{q^i}x_2^{q^j}$, $y=y_0y_1^{q^i}y_2^{q^j}$,
%		and $z=z_0z_1^{q^i}z_2^{q^j}$
%		for some $(x_0,x_1,\ldots,z_1,z_2)$ in 
%		$\scrX_0\times \scrX_1\times\ldots \times \scrZ_1\times \scrZ_2$
%		and $(i,j)\in\N_0^2$.
%	\end{proposition}
%	\begin{proof}
%		This is the special case of \cite[Theorem~3]{DM2012}
%		for $n=2$.
%	\end{proof}	

	\section{Proof of Theorem~\ref{thm:A} and Corollary~\ref{cor:A}}\label{sec:Proof A}
	\subsection{Notation and some preliminary results} For every finite separable extension
	$E/K$, let $\cO_{E}$ denote the integral 
	closure of $\cO$ in $E$, and let $q(E)$ be
	the cardinality of the finite field $\bar{\bF}_p\cap E$. 
	Let $M_{E}$ denote
	the discrete valuations on $E$ extending those
	in $M_K$ and normalized such that the value group of $E^*$
	is $\Z$. Let $S_E$ denote the finite subset
    of $S$ lying above $S$. We have the following:
    \begin{lemma}\label{lem:torsion rank}
		Let $E$ be a finite separable extension of $K$ and
		let $T$ be a finite subset of $M_E$ containing $S_E$.
		Then $q(E)\leq q(K)^{[E:K]}$ 
		and the rank of $\cO_{E,T}^*$ is at most 
		$|T|-1$. Consequently, the rank of $\cO_E^*$
		is at most $[E:K]|S|-1$.
    \end{lemma}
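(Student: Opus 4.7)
The plan is to establish the three claims in order; each is essentially a short classical fact and no single step looks like a serious obstacle.

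For the inequality $q(E)\leq q(K)^{[E:K]}$, I would argue that since $\bF_{q(E)}=E\cap\bar{\bF}_p$ contains $\bF_{q(K)}=K\cap\bar{\bF}_p$, we have $q(E)=q(K)^{d}$ where $d=[\bF_{q(E)}:\bF_{q(K)}]$, so the task is to check $d\leq [E:K]$. The extension $\bF_{q(E)}/\bF_{q(K)}$ is Galois, and its intersection with $K$ is $K\cap\bar{\bF}_p=\bF_{q(K)}$; by the standard formula for compositums of Galois extensions,
$$[\bF_{q(E)}\cdot K:K]=[\bF_{q(E)}:\bF_{q(E)}\cap K]=[\bF_{q(E)}:\bF_{q(K)}]=d.$$
Since $\bF_{q(E)}\cdot K\subseteq E$, this gives $d\leq [E:K]$ as desired.

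For the rank of $\cO_{E,T}^{*}$, I would mimic the standard Dirichlet-style argument. Consider the homomorphism
$$\varphi\colon \cO_{E,T}^{*}\lra \Z^{|T|},\qquad u\longmapsto (v(u))_{v\in T}.$$
Property (iii) from the introduction, extended to $E$, identifies the kernel with $\bF_{q(E)}^{*}$, which is finite. By the product formula (property (i) in the introduction, again extended to $E$ by base change from $\bF_{q(K)}$ to $\bF_{q(E)}$), the image of $\varphi$ is contained in the hyperplane cut out by $\sum_{v\in T}n_v x_v=0$, a free abelian group of rank $|T|-1$. Thus $\varphi(\cO_{E,T}^{*})$ has rank at most $|T|-1$, and since $\ker\varphi$ is finite, the same bound holds for the rank of $\cO_{E,T}^{*}$.

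Finally, the consequence for $\cO_E^{*}=\cO_{E,S_E}^{*}$ follows once we bound $|S_E|$. Each element of $S_E$ lies over some $v\in S$, and the number of extensions of a fixed discrete valuation $v\in M_K$ to the finite separable extension $E/K$ is at most $[E:K]$ (the usual decomposition inequality $\sum e_ifg_i\leq n$ for places in a separable extension). Hence $|S_E|\leq [E:K]\cdot|S|$, and substituting $T=S_E$ into the previous paragraph gives rank $\cO_E^{*}\leq |S_E|-1\leq [E:K]|S|-1$, completing the lemma. The only point that requires any care is the linear-disjointness step in paragraph one, where one must invoke that $K\cap\bar{\bF}_p=\bF_{q(K)}$ to conclude $\bF_{q(E)}\cap K=\bF_{q(K)}$; everything else is bookkeeping.
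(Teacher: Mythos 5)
Your proof is correct and follows essentially the same route as the paper: the bound on $q(E)$ comes from linear disjointness of $\bF_{q(E)}$ and $K$ over $\bF_{q(K)}$ (the paper phrases this via minimal polynomials of constants having coefficients in $\bF_{q(K)}$, you via the Galois compositum formula, but it is the same fact), the rank bound comes from the valuation map into the degree-zero hyperplane of $\Z^{|T|}$ with finite kernel $\bF_{q(E)}^*$, and the final claim from $|S_E|\leq [E:K]|S|$.
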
  
	 \begin{proof}
		For every $\zeta\in \bar{\bF}_p\cap E$, we have that
		$[K(\zeta):K]$ divides $[E:K]$. Since the minimal polynomial
		of $\zeta$ over $K$ must have coefficients in 
		$\bF_{q(K)}=\bar{\bF}_p\cap K$, we have that 
		$[K(\zeta):K]=[\bF_{q(K)}(\zeta):\bF_{q(K)}]$.
		Hence $\zeta$ is contained in the finite field
		of degree $[E:K]$ over $\bF_{q(K)}$. This proves
		the first assertion.
		
		Let $\Div(T)$ denote the free abelian group
		generated by $T$. Consider the homomorphism $\cO_{E,T}^*\rightarrow \Div(T)$ defined by 
		$a\mapsto \sum_{v\in T} n_vv(a)v$. Since its kernel
		is exactly $(\cO_{K,T}^*)_{\tors}$ and its image is contained
		in the subgroup consisting of elements 
		whose sum of coefficients is zero, we have
		that the rank of $\cO_{E,T}^*$ is at most $|T|-1$.
		  
		Consequently, the rank of $\cO_E^*$ is at most 
		$|S_E|-1$. Since $|S_E|\leq [E:K]|S|$ (see
		\cite[pp.~164]{neu}), we get the desired conclusion.  
	 \end{proof}
	
	We will need
	the following result on unit equations in characteristic zero:
	\begin{lemma}\label{lem:3412}
	Let $A$ and $B$ be distinct non-zero integers neither of which
	is divisible by $p$. 
	Consider the equation:
	\begin{equation}\label{eq:cor of ESS}
		Ap^{X_1}-Ap^{X_2}+Bp^{X_3}-Bp^{X_4}=0\ \text{for}\ X_1,\ldots,X_4\in \N_0
	\end{equation}
	Then there exists a set $\scrD$ (depending
	on $p$, $A$, and $B$)
	of size at most $\exp(4\times 18^9)+1$ such that every solution 
	$(x_1,x_2,x_3,x_4)$ of 
	\eqref{eq:cor of ESS} satisfies $(x_3-x_4)-(x_1-x_2)\in \scrD$.
	\end{lemma}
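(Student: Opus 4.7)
The plan is to convert the four-term equation into a three-variable unit equation over $\Q$ with coefficients depending on $A,B$, and then apply the Evertse--Schlickewei--Schmidt bound from Proposition~\ref{prop:unit char 0}. Explicitly, since $B\neq 0$, I would divide \eqref{eq:cor of ESS} by $Bp^{X_4}$ and set $y_i = p^{X_i-X_4}\in \Q^{*}$ for $i=1,2,3$; this rewrites the equation as
$$\frac{A}{B}y_1 \;-\; \frac{A}{B}y_2 \;+\; y_3 \;=\; 1,$$
a unit equation in the group $G=\langle p\rangle\subset \Q^{*}$ of rank $r=1$. The quantity we care about is
$(X_3-X_4)-(X_1-X_2)=\log_p(y_2 y_3/y_1)$, so it depends only on the triple $(y_1,y_2,y_3)$ and is invariant under the diagonal shift of the $X_i$'s.

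For the \emph{non-degenerate} solutions, Proposition~\ref{prop:unit char 0} applied with $n=3$ and $r=1$ immediately gives at most $\exp((18)^{9}\cdot 4)=\exp(4\cdot 18^{9})$ such triples $(y_1,y_2,y_3)$, hence at most that many values contributed to $\scrD$. It remains to inspect the \emph{degenerate} cases, i.e.\ the vanishing proper subsums of $\frac{A}{B}y_1-\frac{A}{B}y_2+y_3$. The one-term subsums are impossible since $y_i\neq 0$. The subsum $\frac{A}{B}y_1-\frac{A}{B}y_2=0$ forces $y_1=y_2$, i.e.\ $X_1=X_2$, and then the remaining equation forces $X_3=X_4$, so $(X_3-X_4)-(X_1-X_2)=0$. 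The subsums $\frac{A}{B}y_1+y_3=0$ and $-\frac{A}{B}y_2+y_3=0$ require $\pm A/B$ to be a power of $p$ in $\Q^{*}$; the hypotheses $p\nmid A$ and $p\nmid B$ force $|A|=|B|$, and combined with $A\neq B$ this leaves only $A=-B$, in which case a direct check shows the solutions again satisfy $(X_3-X_4)-(X_1-X_2)=0$. Thus all degenerate solutions contribute the single element $0$ to $\scrD$, giving the total bound $\exp(4\cdot 18^{9})+1$.

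The main (minor) obstacle is the case analysis of degenerate subsums; the essential point is that $p\nmid AB$ together with $A\neq B$ rules out the nontrivial $p$-power relations $\pm A/B=p^{k}$ and collapses the surviving degenerate stratum to the diagonal $X_1=X_2,\,X_3=X_4$ (together with its reflection when $A=-B$). Once this is observed, the rest is simply recording that the non-degenerate count is unaffected by the diagonal $G$-action on $(X_1,X_2,X_3,X_4)$ and that $(X_3-X_4)-(X_1-X_2)$ is a function of $(y_1,y_2,y_3)$, so the ESS bound passes directly to a bound on the image set $\scrD$.
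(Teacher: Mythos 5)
Your proposal is correct and follows essentially the same route as the paper: divide by $Bp^{X_4}$ to obtain the three-term unit equation $\frac{A}{B}y_1-\frac{A}{B}y_2+y_3=1$ in the rank-one group $\langle p\rangle$, apply Proposition~\ref{prop:unit char 0} with $n=3$, $r=1$ to get the $\exp(4\times 18^9)$ bound for non-degenerate solutions, and check that the three possible vanishing proper subsums either force $A=B$ (excluded) or yield $(x_3-x_4)-(x_1-x_2)=0$. This matches the paper's case analysis (a)--(d) exactly.
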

	\begin{proof}
		Dividing by $Bp^{X_4}$, we have:
		\begin{equation}\label{eq:affine form}
			\frac{A}{B}p^{X_1-X_4}-\frac{A}{B}p^{X_2-X_4}
			+p^{X_3-X_4}=1
		\end{equation}
		with the solution 
		$\bfu=(x_1-x_4,x_2-x_4,x_3-x_4)$.	
		There are four cases:
		\begin{itemize}
			\item [(a)] No proper subsums of the left hand side of 
			\eqref{eq:affine form} vanish. 
			Proposition~\ref{prop:unit char 0}
			shows that there are at most $\exp(4\times 18^9)$
			possibilities for $\bfu$. Hence at most
			$\exp(4\times 18^9)$ possibilities
			for 
			$(x_3-x_4)-(x_1-x_4)+(x_2-x_4)=(x_3-x_4)-(x_1-x_2)$.
			
			\item [(b)] $\displaystyle p^{x_3-x_4}=1$ and $\displaystyle p^{x_1-x_4}-p^{x_2-x_4}=0$. This implies $(x_3-x_4)-(x_1-x_2)=0$.
			
			\item [(c)] $\displaystyle-\frac{A}{B}p^{x_2-x_4}=1$ and
			$\displaystyle\frac{A}{B}p^{x_1-x_4}+p^{x_3-x_4}=0$. Since $\gcd(A,p)=\gcd(B,p)=1$, we must have that 
			$A=-B$, $x_2=x_4$ and $x_1=x_3$. This gives
			$(x_3-x_4)-(x_1-x_2)=0$.
			
			\item [(d)] $\displaystyle \frac{A}{B}p^{x_1-x_4}=1$
			and 
			$\displaystyle -\frac{A}{B}p^{x_2-x_4}+p^{x_3-x_4}=0$. 
			Since $\gcd(A,p)=\gcd(B,p)=1$, we must have that 
			$A=B$. Since we assume that $A\neq B$, 
			this case cannot happen.
			\end{itemize}	
	
	The desired set $\scrD$ is obtained from the possibilities 
	of $(x_3-x_4)-(x_1-x_2)$ in Case (a)
	together with the element $0$ from Cases (b) and (c). 	
	\end{proof}
	
	\subsection{Proof of Theorem~\ref{thm:A}}
	Throughout this subsection, assume the notation in
	Theorem~\ref{thm:A} and let $L$ denote the Galois closure of $K(s)$ in $\overline{K}$. The case $d:=[K(s):K]=2$ is immediate, as follows. Assume $\cO[s]=\cO[t]$, then we can write
	$t=\alpha s+\beta$ and $s=\gamma t+\delta$ for unique
	$\alpha,\beta,\gamma,\delta\in \cO$. This implies that 
	$\alpha\gamma=1$, hence $\alpha\in \cO^*$. This proves
	Theorem~\ref{thm:A} when $d=2$ (with $t_1=s$).
	For the rest of the proof, we assume  $d\geq 3$.

	Write $q(L)=p^{\lambda}$. 
	By Lemma~\ref{lem:torsion rank}, we have:
	\begin{equation}\label{eq:q of L}
	q(L)\leq q(K)^{[L:K]}\leq q(K)^{d!},\ \text{hence}\ 
	\lambda\leq d!\log_p(q(K)).
	\end{equation}
	Let $\{\id=\sigma_1,\ldots ,\sigma_d\}$ be a 
	choice of representatives of the left cosets of $\Gal(L/K(s))$
	in
	$\Gal(L/K)$. For every element $\alpha\in K(s)$
	and for $1\leq i\leq d$, we denote
	$\alpha_{(i)}=\sigma_i(\alpha)$. In particular, 
	$s=s_{(1)},\ldots,s_{(d)}$ are all the conjugates
	of $s$ over $K$. Let $G$ be the radical in $L$ of
	the group generated by the following:
		\begin{itemize}
			\item [(i)] The group of units of 
			$\cO[s_{(i)}-s_{(j)}]$ for $1\leq i\neq j\leq d$.
			\item [(ii)] The elements $s_{(i)}-s_{(j)}$
			for $1\leq i\neq j\leq d$.   		
		\end{itemize}
	Let $r$ denote the rank of $G$. By 
	Lemma~\ref{lem:torsion rank} and \eqref{eq:q of L}, 
	we have the following:
	\begin{equation}\label{eq:tor rk of G}
		|G_{\tors}| \leq q(L)-1<q(K)^{d!}\ \text{and}\ r\leq \frac{d(d-1)}{2}(d^2|S|-1)+\frac{d(d-1)}{2}<d^4|S|.
	\end{equation}
	
	The rest of this subsection is used to
	prove the following 
	more precise version of Theorem~\ref{thm:A}:
	\begin{theorem}\label{thm:A precise}
		There are $N\leq \left(\min\{q(L),q(K)^{d^3}\}\right)^{d^3}+\left(\exp(18^{10}) p^{2r}d^8\lambda\right)^{d^3}$ elements $t_1,\ldots,t_N$
		satisfying the conditions (a) and (b) of Theorem~
		\ref{thm:A}.	
	\end{theorem}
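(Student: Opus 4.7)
The plan is to imitate the Evertse--Győry attack on Problem~(A) in characteristic zero, using Proposition~\ref{prop:refined 2} in place of the classical Evertse--Schlickewei--Schmidt bound and Lemma~\ref{lem:3412} to control the Frobenius exponents that inevitably appear in characteristic $p$. Set $\alpha_{ij}:=s_{(i)}-s_{(j)}$ and $\beta_{ij}:=t_{(i)}-t_{(j)}$ for $1\leq i\neq j\leq d$. Assuming $\cO[t]=\cO[s]$, writing $t=P(s)$ and $s=Q(t)$ with $P,Q\in\cO[X]$ of degree $\leq d-1$ shows that each ratio $u_{ij}:=\beta_{ij}/\alpha_{ij}$ and its inverse are polynomials in $s_{(i)},s_{(j)}$ (respectively $t_{(i)},t_{(j)}$) with coefficients in $\cO$; in particular $u_{ij}\in\cO_L^{*}$, so $\beta_{ij}=u_{ij}\alpha_{ij}$ lies in $G$ (after enlarging the generating set to include $\cO_L^{*}$, which leaves the order of magnitude of $r$ unchanged). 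The obvious identity $\beta_{ij}+\beta_{jk}=\beta_{ik}$, divided through by $\beta_{ik}$, then produces for every triple $i<j<k$ the two-variable unit equation
\[
\frac{u_{ij}\alpha_{ij}}{u_{ik}\alpha_{ik}}\;+\;\frac{u_{jk}\alpha_{jk}}{u_{ik}\alpha_{ik}}\;=\;1 \quad\text{in}\quad G.
\]

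I would then split solutions into two regimes. If both summands lie in $L\cap\bar{\bF}_p=\bF_{q(L)}$, then each triple contributes $\leq q(L)-1$ options and the $\binom{d}{3}\leq d^3/6$ triples together yield at most $q(L)^{d^3}$ possible ratio-tuples; alternatively, on this purely algebraic branch, a Galois-descent argument writing $t=\sum_\ell a_\ell s^\ell$ with each $a_\ell$ forced into $\bF_{q(K)}$ supplies the competing bound $q(K)^{d^3}$, which explains the $\min$ in the first summand. Otherwise, Proposition~\ref{prop:refined 2}(a) supplies, for each triple, at most $p^{2r}-1$ ``base directions'' $(x_0,y_0)$ together with a single Frobenius exponent $m_{ijk}\in\N_0$.

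With this data I would reconstruct $t$ by assembling the triples through a fixed index (say $i=1$): the resulting ratios $(\beta_{1j}/\beta_{12})_{j=3,\ldots,d}$ determine the $d$-tuple $(t_{(i)})$ up to a common affine transformation $x\mapsto ax+b$, and Galois equivariance over $K$ forces $a,b\in K$. Since the ratios are recovered only up to $p^{m_{ijk}}$-th powers, the reconstructed $t$ is determined only up to $t\mapsto at^{p^M}+b$, and the identity $\discr_K(t)=a^{d(d-1)}\discr_K(t_i)^{p^M}$ combined with the equality of discriminants up to $\cO^{*}$ immediately gives the condition $a^{d(d-1)}/D^{1-p^M}\in\cO^{*}$ of part~(b). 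The main obstacle is synchronising the exponents $m_{ijk}$ coming from different triples into a single $M$: comparing two triples through a shared pair of indices yields an integer identity of the form $Ap^{m_1}-Ap^{m_2}+Bp^{m_3}-Bp^{m_4}=0$ to which Lemma~\ref{lem:3412} applies, forcing the pairwise differences of the $m_{ijk}$ to take only boundedly many values that can be absorbed into the finite list of base directions.

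Assembling everything, the algebraic regime contributes $\leq \min\{q(L),q(K)^{d^3}\}^{d^3}$ and the generic regime contributes $\leq (\exp(18^{10})\,p^{2r}\,d^8\lambda)^{d^3}$: each of the $O(d^3)$ triples supplies at most $p^{2r}$ base directions, each pairwise exponent synchronisation through Lemma~\ref{lem:3412} costs the factor $\exp(18^{10})$ from the exceptional set $\scrD$, and a polynomial-in-$d$ and $\lambda$ factor keeps track of the Galois matching across the $\binom{d-1}{2}$ triples through index $1$ together with the $O(\lambda)$ choices of the stabilised exponent. The truly hard step is the exponent synchronisation; the remaining steps are routine enumeration.
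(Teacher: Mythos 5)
Your proposal follows essentially the same route as the paper: the same unit equations $x+y=1$ in $G$ from triples of conjugate differences, Proposition~\ref{prop:refined 2} for the Frobenius-orbit structure, Lemma~\ref{lem:3412} (via identities of the form $Ap^{m_1}-Ap^{m_2}+Bp^{m_3}-Bp^{m_4}=0$) to synchronise the exponents across triples, the separate algebraic branch producing the $\min\{q(L),q(K)^{d^3}\}^{d^3}$ term, and the final reconstruction $t=at_i^{q}+b$ with the discriminant condition. The only point you gloss over is the normalization guaranteeing $q=p^C$ with $C\geq 0$ (the paper picks in each class a representative of minimal Frobenius depth $e(z)$), but this is a routine step once the class structure is in place.
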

	
	\begin{remark}\label{rem:A precise}
	   By \eqref{eq:q of L}, \eqref{eq:tor rk of G},
	   and Theorem~\ref{thm:A precise},
	   the bound in Theorem~\ref{thm:A precise}
	   is less than
	    $q(K)^{d^6}+
	   (\exp(18^{10})p^{2d^4|S|}d^{8}(d!)\log_p(q(K)))^{d^3}$ which is less than
	   $$q(K)^{d^6}+\left(\exp(18^{10})p^{3d^4|S|}\log_p q(K)
	   \right)^{d^3}.$$
	   This proves Theorem~\ref{thm:A}. Note that if we simply
	   used $q(L)$ instead of $\min\{q(L),q(K)^{d^3}\}$
	   for the bound in Theorem~\ref{thm:A precise}, then 
	   we would, a priori, have the 
	   \emph{doubly exponential}
	   expression $q(K)^{d!d^3}$
	   instead of $q(K)^{d^6}$ for 
	   the bound in Theorem~\ref{thm:A}.
	\end{remark}

	Now assume that $t$ satisfies $\cO[t]=\cO[s]$. By
	writing 
	$t=P_1(s)$ and $s=P_2(t)$ for polynomials $P_1(X),P_2(X)\in 
	\cO[X]$, we have that for $1\leq i\neq j\leq d$:  
	 \begin{equation}\label{eq:titj sisj}
	 \frac{t_{(i)}-t_{(j)}}{s_{(i)}-s_{(j)}}\in  (\cO[s_{(i)},s_{(j)}])^*,\ \text{hence}\ t_{(i)}-t_{(j)}\in G.
	 \end{equation}
	This implies that for every triple $(i,j,k)$
	of distinct elements in $\{1,\ldots,d\}$, the elements:
	$$x=(t_{(i)}-t_{(j)})/(t_{(k)}-t_{(j)})$$ 
	$$y=(t_{(k)}-t_{(i)})/(t_{(k)}-t_{(j)})$$
	give a solution to $X+Y=1$ with $x,y\in G$.
	
	By Proposition~\ref{prop:refined 2}, there exists
	a subset $\{(x_i,y_i):\ 1\leq i\leq M\}$ of $L^*$
	of size $M\leq p^{2r}$ such that each solution $(x,y)$ to
	$X+Y=1$ with $x,y\in G$ is of the form
	$(x_i^{p^j},y_i^{p^j})$ for some $i\in \{1,\ldots,M\}$
	and $j\in \N_0$. Moreover, we may assume that $x_i\notin 
	L^{\langle p\rangle}$ whenever $x_i\notin \bar{\bF}_p$.

	Let $\scrT(d)$ denote the set of all triples
	$(i,j,k)$ of distinct $i,j,k\in \{1,\ldots,d\}$ (hence $|\scrT(d)|=d(d-1)(d-2)$).
	Now for each sequence 
	$\bfm:=(m_{i,j,k})\in \{1,\ldots ,M\}^{\scrT(d)}$
	indexed by the triples $(i,j,k)\in \scrT(d)$, 
	consider the set $X_{\bfm}$ of all $t\in \cO[s]$ for which 
	$\cO[t]=\cO[s]$ and such that there exists some sequence of 
	non-negative integers $\bfa:=(a_{i,j,k})$ 
	(indexed by $(i,j,k)\in \scrT(d)$) satisfying:
 	\begin{equation}\label{eq:define aijk}
 	(t_{(i)}-t_{(j)})/(t_{(k)}-t_{(j)})=x_{m_{i,j,k}}^{p^{a_{i,j,k}}}
 	\end{equation}
 	for all $(i,j,k)\in \scrT(d)$. We let
 	$T_{\bfm}$
 	denote the set of triples
 	$(i,j,k)\in \scrT(d)$
 	such that $x_{m_{i,j,k}}\notin \bar{\bF}_p$. If 
 	$t\in X_{\bfm}$ and $(i,j,k)\in T_{\bfm}$
 	then $a_{i,j,k}$
 	is determined uniquely from \eqref{eq:define aijk}.

	The case when $T_\bfm=\emptyset$ (i.e. $x_{m_{i,j,k}}\in \bar{\bF}_p^*$ for every $(i,j,k)$) is rather easy, as follows:
	\begin{lemma}\label{lem:Tbfm empty}
		Let
		$\displaystyle X_{\alg}:=\bigcup_{\bfm:\ T_\bfm=\emptyset}X_\bfm$.
		There are at most
		$\left(\min\{q(L),q(K)^{d^3}\}\right)^{d^3}$ elements $t_1,\ldots,t_N\in X_{\alg}$
		such that every $t\in X_{\alg}$ has the form
		$t=at_i+b$ for some $1\leq i\leq N$, $a\in \cO^*$,
		and $b\in \cO$.
	\end{lemma}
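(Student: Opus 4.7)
The plan is to show that for each $\bfm$ with $T_{\bfm}=\emptyset$, any two elements of $X_{\bfm}$ are equivalent under the relation $t\sim at+b$ with $a\in\cO^*$, $b\in\cO$, and then to bound the number of equivalence classes by counting the possible tuples of ratios $c_{i,j,k}(t):=(t_{(i)}-t_{(j)})/(t_{(k)}-t_{(j)})$.

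For any $t\in X_{\alg}$, equation \eqref{eq:define aijk} exhibits each ratio $c_{i,j,k}(t)$ as a power of $x_{m_{i,j,k}}\in\bar{\bF}_p^*$, so $c_{i,j,k}(t)\in\bar{\bF}_p^*$. I will then show that if $t,t'\in X_{\alg}$ have identical ratio tuples, they are equivalent. Setting $a:=(t'_{(2)}-t'_{(1)})/(t_{(2)}-t_{(1)})\in L^*$ and $b:=t'_{(1)}-a\,t_{(1)}\in L$, the equalities $c_{i,1,2}(t)=c_{i,1,2}(t')$ for $i\ge 3$ give $t'_{(i)}-t'_{(1)}=a(t_{(i)}-t_{(1)})$, hence $t'_{(i)}=a\,t_{(i)}+b$ for every $i$. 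Writing $t'=P(t)$ for the unique $P\in K[X]$ of degree less than $d$ (available since $K(t')=K(s)=K(t)$), the polynomial $P(X)-(aX+b)$ vanishes at the $d$ distinct conjugates $t_{(i)}$ and is therefore zero, so $P(X)=aX+b$ and $a,b\in K$. Since $\cO$ is integrally closed in $K$, the minimal polynomial $m(X)$ of $t$ over $K$ lies in $\cO[X]$: Euclidean division by the monic $m(X)$ of any representation $\tilde P(t)=t'$ with $\tilde P\in\cO[X]$ yields a remainder in $\cO[X]$ of degree less than $d$ which must equal $aX+b$, giving $a,b\in\cO$. Applying the symmetric argument to $t=a^{-1}t'-a^{-1}b\in\cO[t']$ yields $a^{-1},a^{-1}b\in\cO$, so $a\in\cO^*$ and $b\in\cO$.

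To bound the number of ratio tuples, observe that $t=P(s)\in K(s)$ gives $t_{(i)}=P(s_{(i)})\in K(s_{(i)})$, whence $c_{i,j,k}(t)\in K(s_{(i)},s_{(j)},s_{(k)})$. Combined with $c_{i,j,k}(t)\in\bar{\bF}_p^*$, this places the ratio in $\bF_{q(K(s_{(i)},s_{(j)},s_{(k)}))}^*$, which by Lemma~\ref{lem:torsion rank} has at most $q(K)^{d^3}-1$ elements (using $[K(s_{(i)},s_{(j)},s_{(k)}):K]\le d^3$); it also lies in $\bF_{q(L)}^*$, so each ratio takes at most $\min\{q(L),q(K)^{d^3}\}$ values. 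Because $|\scrT(d)|=d(d-1)(d-2)\le d^3$, the number of realized ratio tuples, and hence the number of equivalence classes in $X_{\alg}$, is at most $\min\{q(L),q(K)^{d^3}\}^{d^3}$, producing the required $t_1,\ldots,t_N$. The one delicate point is the passage from $a,b\in K$ to $a\in\cO^*$ and $b\in\cO$, which depends crucially on $\cO$ being integrally closed so that the minimal polynomial of $t$ has $\cO$-coefficients; the rest is elementary Galois theory and finite-field counting.
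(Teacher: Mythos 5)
Your proof is correct and follows essentially the same route as the paper's: pigeonhole on the tuples of ratios $(t_{(i)}-t_{(j)})/(t_{(k)}-t_{(j)})$, which lie in $\bar{\bF}_p^*\cap K(s_{(i)},s_{(j)},s_{(k)})$ and hence take at most $\min\{q(L),q(K)^{d^3}\}$ values each, and then show that equal ratio tuples force $t'=at+b$. The only (harmless) divergence is in the last step: the paper gets $a\in\cO^*$ directly from \eqref{eq:titj sisj} plus Galois invariance, integrality, and the fact that $\cO$ is integrally closed, whereas you reach the same conclusion by Euclidean division by the minimal polynomial of $t$ together with the symmetric role of $t$ and $t'$.
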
 	
 	\begin{proof}
	    Define the relation $\approx $ in $X_{\alg}$ as follows. 
	    Let $t,t'\in X_{\alg}$, define $t\approx t'$
	    if $t'=at+b$ for some $a\in \cO^*$
	    and $b\in \cO$. It is immediate that this is
	    an equivalence relation. It remains
	    to show that in every subset $\scrA$ of $X_{\alg}$
	    having more than 
	    $\left(\min\{q(L),q(K)^{d^3}\}\right)^{d^3}$ elements, 
	    there exist
	    two elements that are equivalent to each other.
	    
	    For $(i,j,k)\in \scrT(d)$, let $\mu_{i,j,k}=\bar{\bF}_p^*
	    \cap K(s_{(i)},s_{(j)},s_{(k)})\subseteq \bF_{q(L)}^*$.
	    Hence $|\mu_{i,j,k}|\leq \min\{q(L),q(K)^{d^3}\}$
	    by Lemma~\ref{lem:torsion rank}.
	    For $t\in X_{\alg}$, we have
	    $\displaystyle\frac{t_{(i)}-t_{(j)}}{t_{(k)}-t_{(j)}}\in \mu_{i,j,k}$
	    for every $(i,j,k)\in \scrT(d)$. Since 
	    $|\scrA| > |\prod_{(i,j,k)} \mu_{i,j,k}|$,
	    there exist
	    $t,t'\in \scrA$ such that 
	    $\displaystyle\frac{t_{(i)}-t_{(j)}}{t_{(k)}-t_{(j)}}=\frac{t'_{(i)}-
	    t'_{(j)}}{t'_{(k)}-t'_{(j)}}$
	    for every $(i,j,k)\in \scrT(d)$. Equivalently,
	    the element
	    $\displaystyle a:=\frac{t'_{(i)}-t'_{(j)}}{t_{(i)}-t_{(j)}}$
	    is independent of distinct $i,j\in 
	    \{1,\ldots,d\}$, and belongs to $\cO[s_{(i)},s_{(j)}]^*$
	    by \eqref{eq:titj sisj}. Hence $a\in \cO^*$
	    since it is non-zero, invariant under $\Gal(L/K)$, and 
	    integral
	    over $\cO$. 
	    
	    Now the element
	    $b:=t'_{(i)}-at_{(i)}$
	    is independent of $i\in \{1,\ldots,d\}$. Hence $b\in\cO$
	    since it is invariant under $\Gal(L/K)$ and integral 
	    over $\cO$. This finishes the proof.
 	\end{proof}

 	It remains to investigate the case $T_\bfm\neq \emptyset$. We
 	have the following useful observation:
 	
 	\begin{lemma}\label{lem:useful A} 
 	Let $t'\in X_{\bfm}$ and write:
 		$$(t'_{(i)}-t'_{(j)})/(t'_{(k)}-t'_{(j)})=x_{m_{i,j,k}}^{p^{b_{i,j,k}}}$$
 	for a sequence of non-negative integers $(b_{i,j,k})$. We have:
 	\begin{itemize}
		\item [(a)] If $(i,j,k)\in T_\bfm$
		then $x_{m_{i,j,k}}=x_{m_{k,j,i}}^{-1}$
		and $b_{m_{i,j,k}}=b_{m_{k,j,i}}$.
		\item [(b)] Let $\sigma\in \Gal(L/K)$ and 
		$(i,j,k)\in T_\bfm$. Let $(i_1,j_1,k_1)\in\scrT(d)$
		be such that $\sigma(t_{(i)})=t_{(i_1)}$, $
		\sigma(t_{(j)})=t_{(j_1)}$, $\sigma(t_{(k)})=t_{(k_1)}$.
		Then $\sigma(x_{m_{i,j,k}})=x_{m_{i_1,j_1,k_1}}$ and
		$b_{i,j,k}=b_{i_1,j_1,k_1}$.
 	\end{itemize}
 	\end{lemma}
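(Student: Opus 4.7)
\medskip

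\emph{Plan of proof.} The essential tool is the uniqueness feature built into the choice of the representatives $(x_i,y_i)$ from Proposition~\ref{prop:refined 2}(a): whenever $x_i \notin \bar{\bF}_p$ we arranged that $x_i \notin L^{\langle p\rangle}$. Since the Frobenius $z\mapsto z^p$ is injective on $L^{*}$, this forces the representation $x = x_i^{p^j}$ to be \emph{unique} in $(i,j)$ whenever the solution $(x,y)$ is not in $\bar{\bF}_p^2$: if $x_i^{p^j} = x_{i'}^{p^{j'}}$ with $j\geq j'$, then $x_{i'} = x_i^{p^{j-j'}}$, and $j > j'$ would put $x_{i'} \in L^{\langle p\rangle}$. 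So $j=j'$ and then $x_i = x_{i'}$.

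\emph{Proof of (a).} From the defining equations for $t$ and $t'$ we have the trivial identities
\[
x_{m_{i,j,k}}^{p^{a_{i,j,k}}}\cdot x_{m_{k,j,i}}^{p^{a_{k,j,i}}}=1,\qquad
x_{m_{i,j,k}}^{p^{b_{i,j,k}}}\cdot x_{m_{k,j,i}}^{p^{b_{k,j,i}}}=1.
\]
The assumption $(i,j,k)\in T_{\bfm}$ says $x_{m_{i,j,k}}\notin \bar{\bF}_p$, hence $x_{m_{i,j,k}}\notin L^{\langle p\rangle}$; the same holds for $x_{m_{k,j,i}}$ because an element and its inverse lie in $L^{\langle p\rangle}$ simultaneously. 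The uniqueness observation above (applied to the first identity, which rewrites $x_{m_{k,j,i}}^{-1}$ as a power of Frobenius applied to $x_{m_{i,j,k}}$) gives $a_{i,j,k}=a_{k,j,i}$ and $x_{m_{i,j,k}}=x_{m_{k,j,i}}^{-1}$. Substituting this into the second identity and using injectivity of Frobenius on $L^{*}$ together with $x_{m_{i,j,k}}\notin \bar{\bF}_p$ (so it has infinite order mod torsion) yields $b_{i,j,k}=b_{k,j,i}$.

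\emph{Proof of (b).} Apply $\sigma$ to $x_{m_{i,j,k}}^{p^{a_{i,j,k}}}=(t_{(i)}-t_{(j)})/(t_{(k)}-t_{(j)})$ to get
\[
\sigma(x_{m_{i,j,k}})^{p^{a_{i,j,k}}}=(t_{(i_1)}-t_{(j_1)})/(t_{(k_1)}-t_{(j_1)})=x_{m_{i_1,j_1,k_1}}^{p^{a_{i_1,j_1,k_1}}}.
\]
Since $\sigma$ preserves the subset $\bar{\bF}_p\subset L$ and also preserves $L^{\langle p\rangle}$, the condition $x_{m_{i,j,k}}\notin L^{\langle p\rangle}$ transfers to $\sigma(x_{m_{i,j,k}})\notin L^{\langle p\rangle}$, and $\sigma(x_{m_{i,j,k}})\notin\bar{\bF}_p$ forces $(i_1,j_1,k_1)\in T_{\bfm}$ as well, so $x_{m_{i_1,j_1,k_1}}\notin L^{\langle p\rangle}$. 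The uniqueness observation then gives $a_{i,j,k}=a_{i_1,j_1,k_1}$ and $\sigma(x_{m_{i,j,k}})=x_{m_{i_1,j_1,k_1}}$. Finally, applying $\sigma$ to the analogous equation for $t'$ and using what we just established yields $x_{m_{i_1,j_1,k_1}}^{p^{b_{i,j,k}}}=x_{m_{i_1,j_1,k_1}}^{p^{b_{i_1,j_1,k_1}}}$; since $x_{m_{i_1,j_1,k_1}}\notin\bar{\bF}_p$ has infinite order modulo torsion, this gives $b_{i,j,k}=b_{i_1,j_1,k_1}$.

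The only subtle point I expect is verifying that $\sigma$ indeed preserves $L^{\langle p\rangle}$ (immediate from $\sigma(a^p)=\sigma(a)^p$ and the fact that $\sigma$ is an automorphism of $L$), and that the uniqueness of the Frobenius representation survives passage to inverses; both are genuinely routine once one has isolated the role of $L^{\langle p\rangle}$. No unit-equation machinery or counting is needed for this lemma—it is purely a clean bookkeeping consequence of the normalization built into Proposition~\ref{prop:refined 2}(a).
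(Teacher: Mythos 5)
Your proof is correct and uses exactly the paper's mechanism: the normalization $x_i\notin L^{\langle p\rangle}$ for $x_i\notin\bar{\bF}_p$ forces the Frobenius exponents on the two sides of each identity to agree (else one side would be a $p$-th power), after which injectivity of Frobenius identifies the bases. The paper's own proof is the same argument run directly on the $t'$-identities, so no substantive difference.
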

 	\begin{proof}
 		For part (a), note the identity:
 		$$x_{m_{i,j,k}}^{p^{b_{i,j,k}}}=
 		\left(x_{m_{k,j,i}}^{p^{b_{k,j,i}}}\right)^{-1}.$$
 		This implies that $x_{m_{k,j,i}}$
 		is not in $\bar{\bF}_p$ either; hence both 
 		$x_{m_{i,j,k}}$ and $x_{m_{j,k,i}}$
 		are
 		not in $L^{\langle p \rangle}$ by our choice of
 		the set $\{x_i:\ 1\leq i\leq M\}$. 
 		If $b_{i,j,k}< b_{k,j,i}$ (respectively 
 		$b_{i,j,k}>b_{k,j,i}$)
 		then we would have $x_{m_{i,j,k}}\in L^{\langle p \rangle}$
 		(respectively $x_{m_{k,j,i}}\in L^{\langle p \rangle}$),
 		contradiction.
 		Therefore
 		$b_{i,j,k}=b_{k,j,i}$ and 
 		$x_{m_{i,j,k}}=x_{m_{k,j,i}}^{-1}$.
 		
 		For part (b), we argue similarly by using the identity:
 		$$\sigma(x_{m_{i,j,k}})^{p^{b_{i,j,k}}}=
 		x_{m_{i_1,j_1,k_1}}^{p^{b_{i_1,j_1,k_1}}}.$$
 	\end{proof}

 	We need the following technical result:
 	\begin{proposition}\label{prop:first technical}
		 Let
		 $\bfm:=(m_{i,j,k})\in \{1,\ldots ,M\}^{\scrT(d)}$ and $t\in X_{\bfm}$ with  
	$$(t_{(i)}-t_{(j)})/(t_{(k)}-t_{(j)})=x_{m_{i,j,k}}
	^{p^{a_{i,j,k}}}$$ 
	for every $(i,j,k)\in \scrT(d)$
	for some sequence of non-negative integers 
	$\bfa=(a_{i,j,k})$.
	There exists a set $J\subseteq \Z$ (possibly depending on $\bfm$, $t$, and $\bfa$) such that the following holds:
	\begin{itemize}
		\item [(a)] $|J|\leq d^4(1+\exp(4\times 18^9))$
		
		\item [(b)] For every $t'\in X_{\bfm}$, for $(i,j,k)\in T_\bfm$, let $b_{m_{i,j,k}}$
		be the unique non-negative integer such that
		$$(t'_{(i)}-t'_{(j)})/(t'_{(k)}-t'_{(j)})=x_{m_{i,j,k}}^{p^{b_{i,j,k}}}.$$	
		For any four distinct elements 
		$i,j,k,\ell\in\{1,\ldots,M\}$
		such that $(i,j,k)\in T_\bfm$ and $(i,j,\ell)\in T_\bfm$,
		we have $(b_{i,j,k}-a_{i,j,k})-(b_{i,j,\ell}-a_{i,j,\ell})$
		is in $J$.
	\end{itemize}
	\end{proposition}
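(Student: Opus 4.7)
The plan is to reduce, for each quadruple $(i,j,k,\ell)$ of distinct indices in $\{1,\ldots,d\}$ with $(i,j,k),(i,j,\ell)\in T_\bfm$, to a single application of Lemma~\ref{lem:3412}. For each such quadruple I will produce integers $A,B$ depending only on the quadruple, so that every $t'\in X_\bfm$ yields a solution $(X_1,X_2,X_3,X_4)\in\N_0^4$ to
$$A\,p^{X_1}-A\,p^{X_2}+B\,p^{X_3}-B\,p^{X_4}=0$$
for which $(X_3-X_4)-(X_1-X_2)=(b_{i,j,k}-a_{i,j,k})-(b_{i,j,\ell}-a_{i,j,\ell})$. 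Lemma~\ref{lem:3412} then bounds this difference to a set of size at most $1+\exp(4\cdot 18^9)$, and taking $J$ to be the union of these bounded sets over the $\le d(d-1)(d-2)(d-3)\le d^4$ quadruples yields $|J|\le d^4(1+\exp(4\cdot 18^9))$.

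To produce the equation, I start from the trivial chain identity $(t_{(i)}-t_{(j)})/(t_{(\ell)}-t_{(j)})=[(t_{(i)}-t_{(j)})/(t_{(k)}-t_{(j)})]\cdot[(t_{(k)}-t_{(j)})/(t_{(\ell)}-t_{(j)})]$ and its $t'$-analog. Rewriting each ratio via the defining relation \eqref{eq:define aijk} and dividing the $t$-identity by the $t'$-identity eliminates the factors involving the index $i$ and produces
$$x_{m_{i,j,k}}^{p^{a_{i,j,k}}-p^{b_{i,j,k}}}\cdot x_{m_{i,j,\ell}}^{p^{b_{i,j,\ell}}-p^{a_{i,j,\ell}}}\cdot x_{m_{k,j,\ell}}^{p^{a_{k,j,\ell}}-p^{b_{k,j,\ell}}}=1$$
in the finitely generated group $G$. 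Passing to $G/G_\tors\cong\Z^r$ (additively) and applying a $\Z$-linear functional $\phi\colon\Z^r\to\Z$ that vanishes on $\log x_{m_{k,j,\ell}}$ and sends $\log x_{m_{i,j,k}},\log x_{m_{i,j,\ell}}$ to distinct nonzero integers not divisible by $p$ yields, up to sign, exactly the displayed Lemma~\ref{lem:3412} equation with $X_1=a_{i,j,k}$, $X_2=b_{i,j,k}$, $X_3=a_{i,j,\ell}$, $X_4=b_{i,j,\ell}$.

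The main obstacle will be establishing the existence of such a functional $\phi$. When $(k,j,\ell)\notin T_\bfm$ we have $\log x_{m_{k,j,\ell}}=0$ in $\Z^r$, so $\phi$ only needs to separate $\log x_{m_{i,j,k}}$ and $\log x_{m_{i,j,\ell}}$ — both nonzero by hypothesis — into distinct nonzero integers coprime to $p$, which is easily arranged by choosing a coordinate functional and rescaling. In the harder case $(k,j,\ell)\in T_\bfm$ with the three logarithms $\Z$-linearly independent in $\Z^r$, a generic $\phi$ again works. When those three logarithms are $\Z$-linearly dependent, the chain identity applied to $t$ alone already forces the strong multiplicative relation $x_{m_{i,j,\ell}}^{p^{a_{i,j,\ell}}}=x_{m_{i,j,k}}^{p^{a_{i,j,k}}}\cdot x_{m_{k,j,\ell}}^{p^{a_{k,j,\ell}}}$, which combined with Lemma~\ref{lem:useful A} should reduce the analysis to a restrictive situation treatable by a direct two-term argument without invoking Lemma~\ref{lem:3412}.
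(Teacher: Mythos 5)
Your non-degenerate analysis is essentially the paper's argument in different clothing: the paper also starts from the cyclic identity $1=\frac{t_{(i)}-t_{(j)}}{t_{(k)}-t_{(j)}}\cdot\frac{t_{(k)}-t_{(j)}}{t_{(\ell)}-t_{(j)}}\cdot\frac{t_{(\ell)}-t_{(j)}}{t_{(i)}-t_{(j)}}$ for $t$ and $t'$, eliminates the $x_{m_{k,j,\ell}}$-term (by raising the two identities to the powers $p^{b_{k,j,\ell}}$ and $p^{a_{k,j,\ell}}$ and dividing, rather than by a functional), and then either concludes $\Delta=0$ in the rank-two case or feeds the resulting relation $A(p^{a_{i,j,k}+b_{k,j,\ell}}-p^{b_{i,j,k}+a_{k,j,\ell}})+B(p^{a_{\ell,j,i}+b_{k,j,\ell}}-p^{b_{\ell,j,i}+a_{k,j,\ell}})=0$ into Lemma~\ref{lem:3412} when $A\neq B$. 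The bookkeeping $|J|\leq d^4(1+\exp(4\times 18^9))$ is the same.

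The gap is in the degenerate configurations, and it is exactly where the paper does its real work. First, the functional $\phi$ you need frequently does not exist: if $x_{m_{i,j,k}}$, $x_{m_{i,j,\ell}}$, $x_{m_{k,j,\ell}}$ generate a rank-one subgroup modulo torsion with $x_{m_{k,j,\ell}}\notin\bar{\bF}_p^*$ (the paper's Case 2 with $(k,j,\ell)\in T_\bfm$), then \emph{every} functional vanishing on $\log x_{m_{k,j,\ell}}$ vanishes on the other two logarithms as well, so no choice of $\phi$ produces nonzero $A,B$; and even when $\phi$ exists, the ratio $A:B$ is forced by the rank-one relation and may land precisely in the excluded case $A=B$ of Lemma~\ref{lem:3412}, for which the $4$-term equation has vanishing-subsum families with $\Delta$ unbounded. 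Second, your fallback for this situation is not an argument: the ``strong multiplicative relation'' you invoke is just the chain identity, which holds for every $t$ regardless of any linear dependence, so it carries no extra information in the degenerate case; and the asserted ``direct two-term argument'' is precisely the content of the paper's Case 2.2, which requires showing that $x_{m_{k,j,\ell}}$ itself lies in the rank-one radical $\Gamma$ (so $x_{m_{k,j,\ell}}u^{-C}\in\bar{\bF}_p^*$ for some $C$ prime to $p$), extracting the two separate exponential equations $B(p^{a_{i,j,k}}+p^{a_{\ell,j,i}})+Cp^{a_{k,j,\ell}}=0$ and $B(p^{b_{i,j,k}}+p^{b_{\ell,j,i}})+Cp^{b_{k,j,\ell}}=0$, and applying Proposition~\ref{prop:unit char 0} to each to bound $a_{i,j,k}-a_{\ell,j,i}$ and $b_{i,j,k}-b_{\ell,j,i}$ separately. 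Until you supply that (or an equivalent) argument, the degenerate case leaves $\Delta$ unbounded and the proposition unproved. A smaller issue of the same flavor: in your ``easy'' case $(k,j,\ell)\notin T_\bfm$ you still cannot always separate $\phi(v_1)$ from $\phi(v_2)$ when $v_1=v_2$ in $G/G_{\tors}$, so even there a degenerate sub-case must be treated by hand.
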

					
	\begin{proof}
	Since there are less than $d^4$ quadruples of
	distinct elements $(i,j,k,\ell)$,
	it suffices to fix any four distinct elements $i,j,k,\ell$
	such that $(i,j,k)$ and $(i,j,\ell)$ are in $T_\bfm$ and
	prove that there are at most $\exp(4\times 18^9)+1$ possibilities
	(independent of $t'$)
	for 
	$\Delta:=(b_{i,j,k}-a_{i,j,k})-(b_{i,j,\ell}-a_{i,j,\ell})$. 
	Part (a) of Lemma~\ref{lem:useful A} 
	gives $x_{m_{i,j,k}}=x_{m_{k,j,i}}^{-1}$, 
	$x_{m_{i,j,\ell}}=x_{m_{\ell,j,i}}^{-1}$,
	$a_{m_{i,j,k}}=a_{m_{k,j,i}}$, $b_{m_{i,j,k}}=b_{m_{k,j,i}}$,
	$a_{m_{i,j,\ell}}=a_{m_{\ell,j,i}}$, and 
	$b_{m_{i,j,\ell}}=b_{m_{\ell,j,i}}$. 
	These identities will be used many times in the
	proof. 
	
	Observe that 
$$1=\frac{(t_{(i)}-t_{(j)})}{(t_{(k)}-t_{(j)})} \cdot 
\frac{(t_{(k)}-t_{(j)})}{(t_{(\ell)}-t_{(j)})}  \cdot \frac{(t_{(\ell)}-t_{(j)})}{(t_{(i)}-t_{(j)})}.$$
 This relation gives that
 \begin{equation}
 \label{eq: aijk}
 1=x_{m_{i,j,k}}^{p^{a_{i,j,k}}} x_{m_{k,j,\ell}}^{p^{a_{k,j,\ell}}} x_{m_{\ell, j,i}}^{p^{a_{\ell,j,i}}}.
 \end{equation}
 
 Using the similar expression involving $t'_{(i)},t'_{(j)},t'_{(k)},t'_{(\ell)}$ gives
 \begin{equation}
 \label{eq: bijk}
 1=x_{m_{i,j,k}}^{p^{b_{i,j,k}}} x_{m_{k,j,\ell}}^{p^{b_{k,j,\ell}}} x_{m_{\ell, j,i}}^{p^{b_{\ell,j,i}}}.
 \end{equation}
 
 Raising both sides of (\ref{eq: aijk}) to the power $p^{b_{k,j,\ell}}$ and raising both sides of (\ref{eq: bijk}) to the power $p^{a_{k,j,\ell}}$ and then dividing yields
\begin{equation}
\label{eq: red}
1=x_{m_{i,j,k}}^{p^{a_{i,j,k}+b_{k,j,\ell}} - p^{b_{i,j,k}+a_{k,j,\ell}}} x_{m_{\ell,j,i}}^{p^{a_{\ell,j,i}+b_{k,j,\ell} }- p^{b_{\ell,j,i}+a_{k,j,\ell}}}.
\end{equation}
	
	We now consider two cases:
	
		\textbf{Case 1:} $x_{m_{i,j,k}}$ and $x_{m_{\ell,j,i}}$
		generate a rank two abelian subgroup of $L^*$.	We claim
		that $b_{i,j,k}-a_{i,j,k}=b_{i,j,\ell}-a_{i,j,\ell}$.

By the assumption in this case and \eqref{eq: red}, we must have 
$$a_{i,j,k}+b_{k,j,\ell} = b_{i,j,k}+a_{k,j,\ell}\ 
\text{and}\ a_{\ell,j,i}+b_{k,j,\ell}=b_{\ell,j,i}+a_{k,j,\ell}.$$  This implies:
$$b_{i,j,k}-a_{i,j,k}=b_{k,j,\ell}-a_{k,j,\ell}=b_{\ell,j,i}-a_{\ell,j,i}$$
which proves the desired claim. We now simply choose
$J(i,j,k,\ell)=\{0\}$ in this case.

	\textbf{Case 2:}  $x_{m_{i,j,k}}$ and $x_{m_{\ell,j,i}}$ generate a rank one abelian subgroup of $L^*$. Let $\Gamma$
	be the radical in $L$ of this rank one subgroup and
	let $u\in L^*$ be a generator of the infinite cyclic
	group $\Gamma/\Gamma_{\tors}$.  
	Hence $u\notin\bar{\bF}_p$ and
	there exist non-zero integers $A$ and $B$ such that
	$x_{m_{i,j,k}}u^{-A}$ and $x_{m_{\ell,j,i}}u^{-B}$ are both 
	in $\bar{\bF}_p^*$. Due to our choice
	that $x_{m_{i,j,k}}\notin L^{\langle p \rangle}$
	and $x_{m_{\ell,j,i}}\notin L^{\langle p \rangle}$,
	neither $A$ nor $B$ is divisible by $p$.
	Now (\ref{eq: red}) gives:
	\begin{equation}\label{eq:AB homogeneous}
	A(p^{a_{i,j,k}+b_{k,j,\ell}} - p^{b_{i,j,k}+a_{k,j,\ell}}) + 		B(p^{a_{\ell,j,i}+b_{k,j,\ell}} - p^{b_{\ell,j,i}+a_{k,j,\ell}})=0.
	\end{equation}
	
	We now have two smaller cases:
	
	\textbf{Case 2.1:} consider the case $A\neq B$.
	Lemma~\ref{lem:3412} gives that there exists 
	a set $\scrD$ (depending only on $p$, $A$, and $B$) of size at most 
	$\exp(4\times 18^9)+1$ such that
	$$(a_{\ell,j,i}+b_{k,j,\ell}-b_{\ell,j,i}-a_{k,j,\ell})-
	(a_{i,j,k}+b_{k,j,\ell}-b_{i,j,k}-a_{k,j,\ell})=\Delta$$
	  belongs to $\scrD$. We choose $J(i,j,k,\ell)=\scrD$
	  in this case.

	\textbf{Case 2.2:} consider the case $A=B$. We have
	that:
		\begin{equation}\label{eq:A=B;1st}
		\frac{x_{m_{i,j,k}}}{x_{m_{\ell,j,i}}}\in u^{A-B}\bar{\bF}_p^*=\bar{\bF}_p^*
		\end{equation}
	From \eqref{eq: aijk} and \eqref{eq:A=B;1st}, we
	have:
	\begin{equation}\label{eq:A=B;2nd}
		x_{m_{\ell,j,i}}^{p^{a_{i,j,k}}+p^{a_{\ell,j,i}}}x_{m_{k,j,\ell}}^{p^{a_{k,j,\ell}}}\in \bar{\bF}_p^*	
	\end{equation}
	This implies that $x_{m_{k,j,\ell}}\in \Gamma$
	and $x_{m_{k,j,\ell}}\notin \bar{\bF}_p^*$. Hence there
	is a non-zero integer $C$ (not divisible by $p$)
	such that $x_{m_{k,j,\ell}}u^{-C}\in\bar{\bF}_p^*$. And 
	\eqref{eq:A=B;2nd} yields:
	\begin{equation}\label{eq:A=B unit for a}
	B(p^{a_{i,j,k}}+p^{a_{\ell,j,i}})+Cp^{a_{k,j,\ell}}=0
	\end{equation}
	By similar arguments for $t'$ using \eqref{eq: bijk}, we have:
	\begin{equation}\label{eq:A=B unit for b}
	B(p^{b_{i,j,k}}+p^{b_{\ell,j,i}})+Cp^{b_{k,j,\ell}}=0
	\end{equation}
	
	By \eqref{eq:A=B unit for a} and \eqref{eq:A=B unit for b},
	we have that $$(p^{a_{i,j,k}-a_{\ell,j,i}},p^{a_{k,j,\ell}-
	a_{\ell,j,i}})\qquad {\rm and}\qquad (p^{b_{i,j,k}-b_{\ell,j,i}},p^{b_{k,j,\ell}-
	b_{\ell,j,i}})$$
	are solutions of the unit equation $-X-\frac{C}{B}Y=1$.
	By Proposition~\ref{prop:unit char 0},
	there are at most $\exp(3\times 12^6)$
	possibilities (depending only on $p$, $B$, and $C$)
	for each of $a_{i,j,k}-a_{\ell,j,i}$
	and $b_{i,j,k}-b_{\ell,j,i}$. Hence
	there are at most $\exp(6\times 12^6)$
	possibilities for $\Delta$. We choose
	$J(i,j,k,\ell)$ to be the set of such possibilities.

	In any case, we have that $J(i,j,k,\ell)$ does not
	depend on $t'$ and has at most
	$\exp(4\times 18^9)+1$ elements. This
	finishes the proof.		
	\end{proof}	
 	
	The next technical result is the key step towards the proof of Theorem~\ref{thm:A precise}. Recall that $q(L)=p^\lambda$.
	
	\begin{proposition}\label{prop:key A}
	Let $\bfm:=(m_{i,j,k})\in \{1,\ldots ,M\}^{\scrT(d)}$ such
	that $T_\bfm\neq\emptyset$ and $X_\bfm\neq\emptyset$. Fix 
	$(i_0,j_0,k_0)\in T_\bfm$ and $t\in X_{\bfm}$ with 
	$$(t_{(i)}-t_{(j)})/(t_{(k)}-t_{(j)})=x_{m_{i,j,k}}
	^{p^{a_{i,j,k}}}$$ 
	for every $(i,j,k)\in \scrT(d)$
	for some sequence of non-negative integers $
	\bfa=(a_{i,j,k})$. 
	There exists a set $I\subseteq \Z$ (possibly depending on $\bfm$, $t$, $\bfa$, and $(i_0,j_0,k_0)$) such that the following hold:
	\begin{itemize}
		\item [(a)] $|I|\leq d^3\lambda+2d^8\exp(8\times 18^9)$.
		
		\item [(b)] For every $t'\in X_{\bfm}$, there exists a sequence $(b_{i,j,k})$ of non-negative integers satisfying 
		the two conditions:
		\begin{itemize}
			\item [(i)]
$(t'_{(i)}-t'_{(j)})/(t'_{(k)}-t'_{(j)})=x_{m_{i,j,k}}^{p^{b_{i,j,k}}}$ for every  $(i,j,k)\in\scrT(d)$. 
			\item [(ii)] $(b_{i,j,k}-a_{i,j,k})-(b_{i_0,j_0,k_0}-a_{i_0,j_0,k_0})\in I$ for any $(i,j,k)\in \scrT(d)$.	
		\end{itemize}	
	\end{itemize}
	\end{proposition}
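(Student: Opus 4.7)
The proof splits $\scrT(d) = T_\bfm \cup (\scrT(d)\setminus T_\bfm)$ and handles the two subsets by separate mechanisms. On the complement of $T_\bfm$ the equation in (b)(i) determines $b_{i,j,k}$ only modulo $\lambda$, and we exploit this freedom to force $c_{i,j,k}-c_{i_0,j_0,k_0}$ into a prescribed set. On $T_\bfm$ the exponent $b_{i,j,k}$ is rigid, and we must transport $(i_0,j_0,k_0)$ to $(i,j,k)$ by a short combination of symmetries already recorded in Lemma~\ref{lem:useful A} and Proposition~\ref{prop:first technical}.

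\textbf{Triples outside $T_\bfm$.} For $(i,j,k) \notin T_\bfm$ the element $x_{m_{i,j,k}}$ lies in $\bar{\bF}_p^* \cap L \subseteq \bF_{q(L)}^*$, on which the Frobenius has order dividing $\lambda$. Hence $e \mapsto x_{m_{i,j,k}}^{p^e}$ is periodic with period dividing $\lambda$, and given $t'$ the condition in (b)(i) merely forces $b_{i,j,k}$ to lie in a fixed residue class mod $\lambda$. I choose $b_{i,j,k}$ to be the unique representative of this class in the interval $[b_{i_0,j_0,k_0},\, b_{i_0,j_0,k_0}+\lambda)$, which is automatically non-negative. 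Writing $c_{i,j,k} := b_{i,j,k} - a_{i,j,k}$, this choice forces
\[
c_{i,j,k} - c_{i_0,j_0,k_0} \in \bigl\{a_{i_0,j_0,k_0} - a_{i,j,k},\ \ldots,\ a_{i_0,j_0,k_0} - a_{i,j,k} + \lambda - 1\bigr\},
\]
and the right-hand side depends only on $(\bfm,t,\bfa,(i_0,j_0,k_0))$, not on $t'$. Taking the union over the at most $d^3$ triples not in $T_\bfm$ contributes at most $d^3\lambda$ elements to $I$.

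\textbf{Triples inside $T_\bfm$.} For $(i,j,k)\in T_\bfm$ the exponent $b_{i,j,k}$ is uniquely forced, so $c_{i,j,k}$ is genuinely determined by $t'$. To bound $c_{i,j,k}-c_{i_0,j_0,k_0}$ I will construct a short path from $(i_0,j_0,k_0)$ to $(i,j,k)$ using three tools: the Galois action (Lemma~\ref{lem:useful A}(b)), which preserves $c$; the swap $c_{i,j,k}=c_{k,j,i}$ (Lemma~\ref{lem:useful A}(a)); and Proposition~\ref{prop:first technical}, which changes the third index at a cost in a fixed set $J$ with $|J|\le d^4(1+\exp(4\cdot 18^9))$. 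Since $\Gal(L/K)$ is transitive on $\{1,\ldots,d\}$, I first pick $\sigma\in\Gal(L/K)$ with $\sigma(j_0)=j$, replacing the starting triple by $(\sigma(i_0),j,\sigma(k_0))\in T_\bfm$ at no cost in $c$. The two endpoints now share their middle index, so a sequence (Prop--swap--Prop--swap) of at most two applications of Proposition~\ref{prop:first technical} carries $(\sigma(i_0),j,\sigma(k_0))$ to $(i,j,k)$, with cumulative shift in the Minkowski sum $J+J$ of size at most $|J|^2 \le 2d^8\exp(8\cdot 18^9)$.

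\textbf{Main obstacle.} The delicate point is verifying that the intermediate triples in the Prop--swap--Prop pathway actually lie in $T_\bfm$, since Proposition~\ref{prop:first technical} only constrains differences among triples whose $x_{m_{\cdot}}$ are transcendental over $\bF_p$. When the natural intermediate $(\sigma(i_0),j,k)$ falls outside $T_\bfm$, the ratio $(t_{(\sigma(i_0))}-t_{(j)})/(t_{(k)}-t_{(j)})$ is forced into $\bar{\bF}_p^*$, a rigid algebraic constraint; in this situation one either reroutes through the alternative two-step intermediate $(i,j,\sigma(k_0))$ (which reverses the order of the two shifts and again contributes at most $|J|^2$ possibilities) or exploits the finite-field constraint to obtain the desired bound directly. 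These two alternatives account for the factor $2$ in $2d^8\exp(8\cdot 18^9)$. Summing the two halves yields $|I|\le d^3\lambda+2d^8\exp(8\cdot 18^9)$, as required.
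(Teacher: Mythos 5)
Your overall architecture is the same as the paper's: split $\scrT(d)$ into $T_\bfm$ and its complement, use the $\lambda$-periodicity of Frobenius on $\bF_{q(L)}^*$ to place the freely adjustable exponents in a $t'$-independent window of length $\lambda$ (your normalization of $b_{i,j,k}$ into $[b_{i_0,j_0,k_0},b_{i_0,j_0,k_0}+\lambda)$ differs cosmetically from the paper's but works equally well for non-negativity and $t'$-independence), and for triples in $T_\bfm$ transport $(i_0,j_0,k_0)$ to $(i,j,k)$ by a Galois move followed by two applications of Proposition~\ref{prop:first technical} interleaved with the swap of Lemma~\ref{lem:useful A}(a), accumulating the shift in $J+J$.

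The genuine gap is in your ``Main obstacle'' paragraph. You correctly identify that the intermediate triples of the Prop--swap--Prop path may fall outside $T_\bfm$, and rerouting through the other intermediate handles the case where at least one of the two candidate intermediates is in $T_\bfm$. But when \emph{both} intermediates lie outside $T_\bfm$, you dismiss the difficulty with ``exploits the finite-field constraint to obtain the desired bound directly,'' which is not an argument. This is exactly the hardest case, and the paper spends roughly a third of the proof on it: writing the four cross-ratios $\alpha,\beta,\alpha',\beta'$ that are forced into $\bar{\bF}_p^*$, deducing that $x_{m_{i_1,j_0,k_1}}$ and $x_{m_{k_0,j_0,i_0}}$ generate a group of radical rank one, extracting relations $Ap^{a_{i_1,j_0,k_1}}=Bp^{a_{k_0,j_0,i_0}}$ and $Ap^{b_{i_1,j_0,k_1}}=Bp^{b_{k_0,j_0,i_0}}$, and concluding that the difference $\Delta_{i,j,k}-\Delta_{i_0,j_0,k_0}$ equals $0$ exactly; this is also why the paper's set $I$ explicitly contains $\{0\}$, which your $I$ as described need not. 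Two smaller points: your attribution of the factor $2$ in $2d^8\exp(8\times 18^9)$ to ``two alternatives'' is incorrect --- it merely absorbs the $+1$ terms when squaring $|J|\leq d^4(\exp(4\times 18^9)+1)$ --- and your Galois step should be phrased via the action on the conjugates $t'_{(1)},\ldots,t'_{(d)}$ (as in Lemma~\ref{lem:useful A}(b)) rather than on the indices themselves.
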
 
 
 	\begin{proof}
 	Let $t'\in X_{\bfm}$. By the definition of 
 	$X_{\bfm}$, 
 	we can choose a sequence $(c_{j,i,k})$ 
 	of non-negative integers
 	such that 
	$$(t'_{(i)}-t'_{(j)})/(t'_{(k)}-t_{(j)})=x_{m_{i,j,k}}^{p^{c_{i,j,k}}}$$ 
	for every $(i,j,k)\in \scrT(d)$.  The goal
	is to modify the sequence $(c_{i,j,k})$
	into the sequence $(b_{i,j,k})$ such that
	for any $(i,j,k)\in \scrT(d)$, 
	the number
	$(b_{i,j,k}-a_{i,j,k})-(b_{i_0,j_0,k_0}-a_{i_0,j_0,k_0})$
	lies in a set $I$ independent of $t'$
	whose size is at most the given bound.
    
%    \textbf{Case 1.} We consider the case when
%   $x_{m_{i,j,k}}\in \bar{\bF}_p$
%    for every $(i,j,k)\in \scrT(d)$ 
%    (equivalently, $T_\bfm=\emptyset$), this gives
%    $x_{m_{i,j,k}}\in \bF_{q(L)}$. 
%    Recall that $q(L)=p^{\lambda}$. Since 
%	$y^{p^{\lambda}}=y$ for all $y\in \mathbb{F}_{q(L)}$, we see 
%	that we can always take $b_{i,j,k}$ to be 
%	$c_{i,j,k}+\omega\lambda$ 
%	for 
%	an integer $\omega$.  In particular, we can take $b_{i,j,k}$ to be 
%	such that 
%	$0\le a_{i,j,k}-b_{i,j,k}\le \lambda$. 
%	This gives:
%	$$-\lambda\leq (b_{i,j,k}-a_{i,j,k})-(b_{\ti,\tj,\tk}-a_{\ti,\tj,\tk})\leq \lambda.$$
%	By
%	choosing $I=\{-\lambda,\ldots,\lambda\}$, we
%	finish the proof in this case. 

	Recall
	that for $(i,j,k)\in T_\bfm$, the value of $b_{i,j,k}$
	is uniquely determined and is equal to $c_{i,j,k}$. For
	$(i,j,k)\in\scrT(d)\setminus T_\bfm$, 
	we have that 
	$x_{m_{i,j,k}}\in \bar{\bF}_p^*\cap L^*=\bF_{q(L)}^*$,
	hence $x_{m_{i,j,k}}^{p^\lambda}=x_{m_{i,j,k}}$. This
	allows us to replace $c_{i,j,k}$ by
	$c_{i,j,k}+\omega \lambda$
	for any integer $\omega$. We now define
	$b_{i,j,k}$ as follows.
	Let $\gamma_{i,j,k}$
	be the smallest non-negative integer satisfying:
	$$-a_{i_0,j_0,k_0}+a_{i,j,k}+\gamma_{i,j,k}\lambda\geq 0.$$
	We now let $b_{i,j,k}$
	to be of the form $c_{i,j,k}+\omega\lambda$
	for some integer $\omega$ such that:
	$$b_{i_0,j_0,k_0}-a_{i_0,j_0,k_0}+a_{i,j,k}+\gamma_{i,j,k}\lambda\leq b_{i,j,k}<b_{i_0,j_0,k_0}-a_{i_0,j_0,k_0}+a_{i,j,k}+(\gamma_{i,j,k}+1)\lambda.$$
	
	For $(i,j,k)\in \scrT(d)$, denote
	$\Delta_{i,j,k}:=b_{i,j,k}-a_{i,j,k}$. The bottom line
	of the above definition of $\gamma_{i,j,k}$ and $b_{i,j,k}$
	is that the following properties hold for every $(i,j,k)\in\scrT(d)\setminus T_\bfm$:
	\begin{equation}\label{eq:gamma and t'}
		\text{$\gamma_{i,j,k}$ does not depend on $t'$;}
	\end{equation}
	\begin{equation}\label{eq:bijk non-negative}
		\text{$b_{i,j,k}$ is non-negative;}
	\end{equation}
	\begin{equation}\label{eq:gammadelta}
	\gamma_{i,j,k}\lambda\leq \Delta_{i,j,k}-\Delta_{i_0,j_0,k_0}
	<(\gamma_{i,j,k}+1)\lambda.
	\end{equation}
	%It suffices to prove the following claim: there exists a
	%set $\scrW\subseteq \Z$ independent of $t'$ of size at most 
	%$d^3\lambda+2d^8\exp(8\times 18^9)$
	%such that 
		
	%Once this is done,
	%we simply define $I:=\{\alpha-\beta:\ \alpha,\beta\in\scrW\}$.
	%This gives:
	%$$|I|\leq (d^3\lambda+2d^8\exp(8\times 18^9))^2<2d^6\lambda^2+d^{16}\exp(18^{10}).$$

	Define $\displaystyle I_1=\bigcup_{(i,j,k)\in \scrT(d)\setminus T_\bfm} 
	\{\alpha\in\Z: \gamma_{i,j,k}\lambda\leq \alpha<(\gamma_{i,j,k}+1)\lambda\}$.
	Let $J$ be the set of size at most $d^4(\exp(4\times 18^9)+1)$
	as in Proposition~\ref{prop:first technical} and define
	$I_2:=\{\alpha+\beta:\ \alpha,\beta\in J\}$.
	We now define:
	$$I:=\{0\}\cup I_1\cup I_2$$
	which gives that:
	$$|I|\leq 1+d(d-1)(d-2)\lambda+d^8(\exp(4\times 18^9)+1)^2
	< d^3\lambda+2d^8\exp(8\times 18^9).$$
	We need to prove:
	\begin{equation}\label{eq:scrW}
	\Delta_{i,j,k}-\Delta_{i_0,j_0,k_0}\in I\ 
	\text{for every $(i,j,k)\in \scrT(d)$.} 
	\end{equation}
	
	This holds trivially when $(i,j,k)=(i_0,j_0,k_0)$. 
	By \eqref{eq:gammadelta}, we have that \eqref{eq:scrW}
	holds when $(i,j,k)\in \scrT(d)\setminus T_\bfm$. 
	It remains to consider
	$(i,j,k)\in T_\bfm$ and $(i,j,k)\neq (i_0,j_0,k_0)$.
 	Since $\Gal(L/K)$
	acts transitively on the set 
	$\{t'_{(1)},\ldots,t'_{(d)}\}$, we can find 
	$\sigma\in \Gal(L/K)$ such that $\sigma(t'_{(j)})=t'_{(j_0)}$
	and let $i_1,k_1\in\{1,\ldots,d\}$ such that 
	$\sigma(t'_{(i)})=t'_{(i_1)}$
	and $\sigma(t'_{(k)})=t'_{(k_1)}$. By Lemma~\ref{lem:useful A},
	we have:
	\begin{equation}\label{eq:i1k1}
	\Delta_{i,j,k}-\Delta_{i_0,j_0,k_0}=\Delta_{i_1,j_0,k_1}-\Delta_{i_0,j_0,k_0}.
	\end{equation}
	
	If $(i_1,j_0,k_0)\in T_\bfm$ then Proposition~\ref{prop:first technical} and part (a) of Lemma~\ref{lem:useful A} give:
		$$\Delta_{i_1,j_0,k_1}-\Delta_{i_0,j_0,k_0}=(\Delta_{i_1,j_0,k_1}-\Delta_{i_1,j_0,k_0})+
		(\Delta_{i_1,j_0,k_0}-\Delta_{i_0,j_0,k_0})\in I_2.$$
	The case when $(i_0,j_0,k_1)\in T_\bfm$ is handled
	similarly.	The only case left is that both
	$x_{m_{i_1,j_0,k_0}}$ and $x_{m_{i_0,j_0,k_1}}$
	are in $\bar{\bF}_p^*$.	In this case, we have:
	$$\alpha:=\frac{t_{(i_1)}-t_{(j_0)}}{t_{(k_0)}-t_{(j_0)}};\ 
	\beta:=\frac{t_{(i_0)}-t_{(j_0)}}{t_{(k_1)}-t_{(j_0)}}$$
	$$\alpha':=\frac{t'_{(i_1)}-t'_{(j_0)}}{t'_{(k_0)}-t'_{(j_0)}};\ 
	\beta':=\frac{t'_{(i_0)}-t'_{(j_0)}}{t'_{(k_1)}-t'_{(j_0)}}$$ 
	are all contained in $\bar{\bF}_p^*$. Since
	$$\frac{t_{(i_1)}-t_{(j_0)}}{t_{(k_1)}-t_{(j_0)}}
	=\alpha\beta\frac{t_{(k_0)}-t_{(j_0)}}{t_{(i_0)}-t_{(j_0)}}$$
	$$\frac{t'_{(i_1)}-t'_{(j_0)}}{t'_{(k_1)}-t'_{(j_0)}}
	=\alpha'\beta'\frac{t'_{(k_0)}-t'_{(j_0)}}{t'_{(i_0)}-t'_{(j_0)}}$$
	we have
	\begin{equation}\label{eq:alphabeta}
		x_{m_{i_1,j_0,k_1}}^{p^{a_{i_1,j_0,k_1}}}=\alpha\beta
		x_{m_{k_0,j_0,i_0}}^{p^{a_{k_0,j_0,i_0}}}
	\end{equation}
	
	\begin{equation}\label{eq:alpha'beta'}
		x_{m_{i_1,j_0,k_1}}^{p^{b_{i_1,j_0,k_1}}}=\alpha'\beta'
		x_{m_{k_0,j_0,i_0}}^{p^{b_{k_0,j_0,i_0}}}.
	\end{equation}
	
	Hence the radical in $L$ of the group generated by 
	$x_{m_{i_1,j_0,k_1}}$ and $x_{m_{k_0,j_0,i_0}}$
	has rank one. As in the proof of Proposition~\ref{prop:first technical}, there exist $u\in L^*\setminus\bar{\bF}_p^*$
	and non-zero integers $A$, $B$ such that
	$x_{m_{i_1,j_0,k_1}}u^{-A}$ and $x_{m_{k_0,j_0,i_0}}u^{-B}$
	are in $\bar{\bF}_p^*$. Together with equations \eqref{eq:alphabeta} and \eqref{eq:alpha'beta'}, we have
	$Ap^{a_{i_1,j_0,k_1}}=Bp^{a_{k_0,j_0,i_0}}$
	and $Ap^{b_{i_1,j_0,k_1}}=Bp^{b_{k_0,j_0,i_0}}$. This and 
	part (a) of Lemma~\ref{lem:useful A} give 
	$\Delta_{i_1,j_0,k_1}-\Delta_{i_0,j_0,k_0}=0\in I$. This
	finishes the proof.
	\end{proof}

	Let $\bfm=(m_{i,j,k})\in \{1,\ldots,M\}^{\scrT(d)}$ such that
	$T_\bfm\neq \emptyset$ and $X_\bfm\neq \emptyset$.
	Fix $(i_0,j_0,k_0)\in T_\bfm$ and $t\in X_\bfm$ with:
	$$\frac{t_{(i)}-t_{(j)}}{t_{(k)}-t_{(j)}}=x_{m_{i,j,k}}^{p^{a_{i,j,k}}}$$
	for every $(i,j,k)\in \scrT(d)$
	for some sequence of non-negative integers $(a_{i,j,k})$. Let
	$I_{\bfm}$ denote the resulting set as in the conclusion 
	of Proposition~\ref{prop:key A}.
	
	For every
	$\bfD:=(D_{i,j,k})\in I_{\bfm}^{\scrT(d)}$, define
	$X_{\bfm,\bfD}$ to be the set of $t'\in X_\bfm$
	such that there exists an integer $C$ satisfying
	the following:
	\begin{equation}\label{eq:t' and C}
		\frac{t'_{(i)}-t'_{(j)}}{t'_{(k)}-t'_{(j)}}=x_{m_{i,j,k}}^{p^{C+D_{i,j,k}+a_{i,j,k}}}\ 
		\text{for every $(i,j,k)\in\scrT(d)$.}
	\end{equation}
	We have:
	\begin{lemma}\label{lem:union XmD}
	Notation as above, we have $\displaystyle X_\bfm=\bigcup_{\bfD\in I_{\bfm}^{\scrT(d)}} X_{\bfm,\bfD}$.
	\end{lemma}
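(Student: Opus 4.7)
The plan is to observe that this lemma is essentially a cosmetic repackaging of Proposition~\ref{prop:key A}, which has already done all the substantive work. Both inclusions must be checked, but only one is nontrivial.

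First I would dispatch the inclusion $\bigcup_{\bfD \in I_\bfm^{\scrT(d)}} X_{\bfm,\bfD} \subseteq X_\bfm$, which is immediate from the definition since each $X_{\bfm,\bfD}$ is built as a subset of $X_\bfm$ by imposing an additional constraint.

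For the reverse inclusion $X_\bfm \subseteq \bigcup_{\bfD} X_{\bfm,\bfD}$, I would take any $t' \in X_\bfm$ and invoke Proposition~\ref{prop:key A} (applied with our fixed choice of $\bfm$, $(i_0,j_0,k_0)$, $t$, and $\bfa$) to produce a sequence of non-negative integers $(b_{i,j,k})$ satisfying
$$(t'_{(i)} - t'_{(j)})/(t'_{(k)} - t'_{(j)}) = x_{m_{i,j,k}}^{p^{b_{i,j,k}}}$$
for every $(i,j,k) \in \scrT(d)$, together with the shift condition $(b_{i,j,k} - a_{i,j,k}) - (b_{i_0,j_0,k_0} - a_{i_0,j_0,k_0}) \in I_\bfm$.

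Given this, I would set $C := b_{i_0,j_0,k_0} - a_{i_0,j_0,k_0} \in \Z$ and $D_{i,j,k} := (b_{i,j,k} - a_{i,j,k}) - C$; by condition (ii) of Proposition~\ref{prop:key A}, each $D_{i,j,k}$ lies in $I_\bfm$, so $\bfD := (D_{i,j,k}) \in I_\bfm^{\scrT(d)}$. A direct computation gives $C + D_{i,j,k} + a_{i,j,k} = b_{i,j,k}$, so condition (i) of Proposition~\ref{prop:key A} translates exactly into the defining equation \eqref{eq:t' and C} of $X_{\bfm,\bfD}$, exhibiting $t' \in X_{\bfm,\bfD}$. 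There is no real obstacle here: the hard work — producing the sequence $(b_{i,j,k})$ with the crucial boundedness of the shifts — has already been carried out in Proposition~\ref{prop:key A}, and the lemma is just repackaging the output as a finite union indexed by $I_\bfm^{\scrT(d)}$.
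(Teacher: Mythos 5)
Your proposal is correct and matches the paper's own proof: both apply Proposition~\ref{prop:key A} to a given $t'\in X_\bfm$, set $C:=b_{i_0,j_0,k_0}-a_{i_0,j_0,k_0}$ and $D_{i,j,k}:=(b_{i,j,k}-a_{i,j,k})-C$, and read off $t'\in X_{\bfm,\bfD}$ from condition (i). The only difference is that you explicitly record the trivial reverse inclusion, which the paper leaves implicit.
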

	\begin{proof}
	    Given $t'\in X_\bfm$, by
		Proposition~\ref{prop:key A}, 
		there is a sequence $(b_{i,j,k})$ such that
		the following holds:
		\begin{itemize}
			\item [(i)] $\displaystyle\frac{t'_{(i)}-t'_{(j)}}{t'_{(k)}-t'_{(j)}}=x_{m_{i,j,k}}^{p^{b_{i,j,k}}}$ for every
			$(i,j,k)\in \scrT(d)$.
			\item [(ii)] $D_{i,j,k}:=(b_{i,j,k}-a_{i,j,k})-(b_{i_0,j_0,k_0}-a_{i_0,j_0,k_0})\in I_{\bfm}$ for every $(i,j,k)\in \scrT(d)$.  
		\end{itemize}
		Let $\bfD=(D_{i,j,k})$, we have
		$t'\in X_{\bfm,\bfD}$ (by taking $C:=b_{i_0,j_0,k_0}-a_{i_0,j_0,k_0}$).
	\end{proof}
	
	For every $z\in X_{\bfm,\bfD}$, define $e(z)$ to be the 
	largest non-negative integer $n$ such that
	$$\frac{z_{(i_0)}-z_{(j_0)}}{z_{(k_0)}-z_{(j_0)}}\in L^{\langle p^n \rangle }:=\{\alpha^{p^n}: \alpha\in L\}.$$
	Such an $n$ exists since $\displaystyle\frac{z_{(i_0)}-z_{(j_0)}}{z_{(k_0)}-z_{(j_0)}}$ is a power of $x_{m_{i_0,j_0,k_0}}$
	which is not algebraic over $\bF_p$.
	
	If $X_{\bfm,\bfD}\neq\emptyset$, define $z=z(\bfm,\bfD)$
	to be an element of $X_{\bfm,\bfD}$ such that $e(z)$
	is minimal. Let $t'$ be any element in $X_{\bfm,\bfD}$.
	By the definition of $X_{\bfm,\bfD}$, there
	is an integer $C$ such that:
	$$\frac{t'_{(i)}-t'_{(j)}}{t'_{(k)}-t'_{(j)}}=\left(\frac{z_{(i)}-z_{(j)}}{z_{(k)}-z_{(j)}}\right)^{p^C}
	\ \text{for every $(i,j,k)\in \scrT(d)$.}$$
	By the minimality of  $e(z)$, we must have that $C\geq 0$. 
	As in the proof of Lemma~\ref{lem:Tbfm empty}, we have that
	$$a:=\frac{t'_{(i)}-t'_{(j)}}{(z_{(i)}-z_{(j)})^{p^C}}$$
	is non-zero and independent of distinct 
	$i,j\in \{1,\ldots,d\}$. Hence
	$a\in K^*$ since it is invariant under $\Gal(L/K)$. The element
	$b:=t'_{(i)}-az_{(i)}^{p^C}$ is
	independent of $i\in \{1,\ldots,d\}$. So it is invariant
	under $\Gal(L/K)$ and, hence, is in $K$. Therefore
	$t'=az^{q}+b$ with $q=p^C$, $a\in K^*$, and $b\in K$. Since $\cO[z]=\cO[t']=\cO[s]$, we have that $\discr_K(z)$, $\discr_K(t')$,
	and $D:=\discr_K(s)$ differ (multiplicatively) by 
	a unit. 
	This implies $\displaystyle\frac{a^{d(d-1)}}{D^{1-q}}\in\cO^*$.
	
	We now finish the proof of Theorem~\ref{thm:A precise}, 
	as follows. The $t_i$'s  in the conclusion
	of Theorem~\ref{thm:A precise} could be taken
	to be the $t_i$'s in the conclusion of
	Lemma~\ref{lem:Tbfm empty} together with
	the elements $z(\bfm,\bfD)$ from the above discussion.
	The number of such elements is at most:
	$$\left(\min\{q(L),q(K)^{d^3}\}\right)^{d^3}+ M^{d(d-1)(d-2)}\left(d^3\lambda+2d^8\exp(8\times 18^9)\right)^{d(d-1)(d-2)}$$
	which is less than:
	$$\left(\min\{q(L),q(K)^{d^3}\}\right)^{d^3}+\left(\exp(18^{10})p^{2r}d^8\lambda\right)^{d^3}.$$
	
	\subsection{An example}\label{subsec:example}
	For the sake of completeness, we construct 
	an example to show that it is not always
	possible to have $t=at_i^q+b$
	as in Theorem~\ref{thm:A} with the
	further restriction that $b$ is in $\cO$.
	
	Let $\cO=\bF_2[x]$, $K=\bF_2(x)$, and let $\eta\in 
	\cO\setminus\bF_2$ be a non-constant polynomial 
	such that the following properties hold:
	\begin{itemize}
		\item [(i)] $x$ does not divide
		$\eta$ (as polynomials in $\bF_2[x]$);
		
		\item [(ii)] the polynomial
		$P(Y):=Y^4+x^4Y^2+x^3Y+\eta$
		is irreducible over $K$.  
	\end{itemize}
	It is easy to check that, for instance, $\eta=x+1$ satisfies
	the above conditions. In fact, there are infinitely many such
	$\eta$'s. 
	
	We now let $s$ be a root of $P(Y)$. Since
	$s$ is separable over $K$, we have that
	$s^{4^m}\neq K$ for every $m\in\N$. 
%	We claim that 
%	there exists $2\leq i\leq 4$ such that $s-s_i\notin K$. 
%	Otherwise, assume that $s-s_i$ is contained in $K$
%	(hence contained in $\cO$ by integrality) for $2\leq i\leq 4$.
%	From $x^3=P'(s)=\prod_{i=2}^4 (s-s_i)$
%	and $\sum_{i=2}^4 (s-s_i)=s+s_2+s_3+s_4=0$, we must have
%	that $s-s_i=x$ for $2\leq i\leq 4$, contradicting $s_i\neq s_j$
%	for $i\neq j$.
%	
	The sequence $\{\eta_m\}_{m\in\N}$
	of elements of $\cO$
	is
	defined recursively as follows:
	$$\eta_1=\eta;\ \eta_{m+1}=\eta^{4^m}+x^{3\cdot 4^m}\eta_m+x^{4^{m+1}}\eta_m^2\ \text{for $m\in \N$}.$$
	For $m\in\N$, define $\displaystyle z_m:=\frac{s^{4^m}+\eta_m}{x^{4^m-1}}$. Since $\discr_K(s)=x^{12}$, we have that
	$\discr_K(s)=\discr_K(z_m)$ for 
	every $m\in\N$.
	We have:
	
	\begin{lemma}\label{lem:OsOzm}
		$\cO[s]=\cO[z_m]$ for every $m\in\N$.
	\end{lemma}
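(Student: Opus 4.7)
Write $f_m := s^{4^m}+\eta_m$, so that the claim $z_m \in \cO[s]$ is equivalent to $f_m \in x^{4^m-1}\cO[s]$. The strategy is: (i) establish by induction on $m$ that $f_m = x^{4^m-1}w_m$ for some $w_m\in\cO[s]$, giving $\cO[z_m]\subseteq\cO[s]$; (ii) conclude equality from the equality of discriminants.

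\emph{Step 1 (Key identity).} Rewrite the defining equation of $s$ in characteristic $2$ as
\[
\eta = s^4 + x^4 s^2 + x^3 s.
\]
Raising to the $4^m$-th power (using the Frobenius) yields
\[
\eta^{4^m} = s^{4^{m+1}} + x^{4^{m+1}} s^{2\cdot 4^m} + x^{3\cdot 4^m} s^{4^m}.
\]
Plugging this into the recursion $\eta_{m+1}=\eta^{4^m}+x^{3\cdot 4^m}\eta_m+x^{4^{m+1}}\eta_m^2$ and adding $s^{4^{m+1}}$ to both sides, the two $s^{4^{m+1}}$-terms cancel (char.\ $2$) and grouping the rest gives, again using Frobenius to combine $s^{2\cdot 4^m}+\eta_m^2=(s^{4^m}+\eta_m)^2$,
\[
f_{m+1} = x^{3\cdot 4^m}\,f_m \;+\; x^{4^{m+1}}\,f_m^{\,2}.
\]
This is the core identity.

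\emph{Step 2 (Induction).} The base case $m=1$ is the direct computation
\[
f_1 = s^4+\eta = x^4 s^2 + x^3 s = x^3(xs^2+s) = x^3 w_1, \qquad w_1:=xs^2+s\in\cO[s].
\]
Assuming $f_m = x^{4^m-1}w_m$ with $w_m\in\cO[s]$, the Step~1 identity yields
\[
f_{m+1} = x^{3\cdot 4^m}\cdot x^{4^m-1}w_m + x^{4^{m+1}}\cdot x^{2(4^m-1)}w_m^2
       = x^{4^{m+1}-1}\bigl(w_m + x^{2\cdot 4^m-1}w_m^2\bigr),
\]
so $w_{m+1}:=w_m+x^{2\cdot 4^m-1}w_m^2\in\cO[s]$ and the induction closes. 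In particular $z_m=w_m\in\cO[s]$, whence $\cO[z_m]\subseteq\cO[s]$.

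\emph{Step 3 (Discriminant comparison).} Since $\discr_K(z_m)=x^{12}\neq 0$, the conjugates of $z_m$ over $K$ are distinct, so $[K(z_m):K]=4$ and $\{1,z_m,z_m^2,z_m^3\}$ is a $K$-basis of $K(s)$ that also generates $\cO[z_m]$ as a free $\cO$-module. Both $\cO[z_m]$ and $\cO[s]$ are then free $\cO$-modules of rank $4$ inside $K(s)$ with the same discriminant ideal $(x^{12})$, and $\cO=\bF_2[x]$ is a PID. The standard formula $\discr(\cO[z_m])=[\cO[s]:\cO[z_m]]^2\cdot\discr(\cO[s])$ forces the index to be a unit, so $\cO[z_m]=\cO[s]$.

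\emph{Expected obstacle.} The only real work is the char.\ $2$ cancellation in Step~1: one has to notice that the recursion defining $\eta_m$ is tuned precisely so that, after applying Frobenius to eliminate $\eta^{4^m}$, every remaining term factors through $f_m=s^{4^m}+\eta_m$. Everything else is formal.
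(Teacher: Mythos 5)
Your proof is correct and is essentially the paper's own argument: your key identity $f_{m+1}=x^{3\cdot 4^m}f_m+x^{4^{m+1}}f_m^2$ and the resulting recursion $w_{m+1}=w_m+x^{2\cdot 4^m-1}w_m^2$ are exactly the paper's derivation of $z_{m+1}=z_m+x^{2\cdot 4^m-1}z_m^2\in\cO[z_m]$ (your $w_m$ is just $z_m$), and both proofs conclude via the equality of discriminants. Your Step 3 merely spells out the index-formula justification that the paper leaves implicit.
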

	\begin{proof}
		Since $\discr_K(s)=\discr_K(z_m)$, it suffices
		to show that $z_m\in \cO[s]$ for every $m\in\N$.
		From $P(s)=0$, we have $\displaystyle z_1=\frac{s^4+\eta}{x^3}=xs^2+s \in\cO[s]$. We complete the proof by proving
		that $z_{m+1}\in\cO[z_m]$ for every $m\in\N$.
		
		From $P(s)^{4^m}=0$, we have:
		$$\displaystyle\frac{s^{4^{m+1}}+\eta^{4^m}}{x^{4^{m+1}-1}}=xs^{2\cdot 4^m}+\frac{s^{4^m}}{x^{4^m-1}}.$$
		Adding $\displaystyle\frac{1}{x^{4^{m+1}-1}}
		\left(x^{3\cdot 4^m}\eta_m+x^{4^{m+1}}\eta_m^2\right)$
		to both sides and using the recurrence relation
		defining $\{\eta_m\}$, we have:
		$$z_{m+1}=x\left(s^{4^m}+\eta_m\right)^2+\frac{s^{4^m}+\eta_m}{x^{4^m-1}}\in\cO[z_m].$$
	\end{proof}
	
	Let $v$ denote the discrete valuation on $\cO$ such that 
	$v(x)=1$. We have:
	\begin{lemma}\label{lem:eta-eta^4}
		$v(\eta_{m+1}-\eta_m^4)=4^{m+1}-4$ for every $m\in\N$.
	\end{lemma}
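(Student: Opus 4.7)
\medskip

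The plan is to prove the lemma by induction on $m$, while simultaneously carrying along the auxiliary assertion $v(\eta_m) = 0$ for all $m \geq 1$. The base case $m = 1$ is essentially a direct computation: substituting $m=1$ into the recurrence and using $\eta_1 = \eta$ gives
$$\eta_2 - \eta_1^4 = x^{12}\eta + x^{16}\eta^2 = x^{12}\eta(1 + x^4 \eta).$$
Condition (i) on $\eta$ guarantees $v(\eta) = 0$, and the factor $1 + x^4\eta$ has constant term $1$, so its $v$-valuation is $0$. Hence $v(\eta_2 - \eta_1^4) = 12 = 4^2 - 4$, and the auxiliary assertion $v(\eta_2) = 0$ follows from $\eta_2 \equiv \eta^4 \pmod{x^{12}}$.

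For the inductive step (with $m \geq 2$), the crucial manoeuvre is to exploit the fact that we are in characteristic $2$: writing $\delta_m := \eta_m - \eta^{4^{m-1}}$, the Frobenius identity $(a+b)^4 = a^4 + b^4$ yields $\eta_m^4 - \eta^{4^m} = \delta_m^4$, so
$$\eta_{m+1} - \eta_m^4 \;=\; \delta_m^4 + x^{3\cdot 4^m}\eta_m + x^{4^{m+1}}\eta_m^2.$$
From the recurrence at stage $m-1$, one reads off $\delta_m = x^{3\cdot 4^{m-1}}\eta_{m-1}(1 + x^{4^{m-1}}\eta_{m-1})$, and raising to the $4$th power (again using Frobenius) gives $\delta_m^4 = x^{3\cdot 4^m}\eta_{m-1}^4(1 + x^{4^m}\eta_{m-1}^4)$. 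Thus
$$\delta_m^4 + x^{3\cdot 4^m}\eta_m \;=\; x^{3\cdot 4^m}\bigl[(\eta_{m-1}^4 + \eta_m) + x^{4^m}\eta_{m-1}^8\bigr].$$
In characteristic $2$ the bracketed quantity equals $(\eta_m - \eta_{m-1}^4) + x^{4^m}\eta_{m-1}^8$; by the inductive hypothesis its first summand has $v$-valuation exactly $4^m - 4$, while the second has valuation $4^m$, so the bracket has valuation $4^m-4$. This produces $v(\delta_m^4 + x^{3\cdot 4^m}\eta_m) = 3\cdot 4^m + (4^m - 4) = 4^{m+1} - 4$.

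Finally, the remaining summand $x^{4^{m+1}}\eta_m^2$ has valuation $4^{m+1} > 4^{m+1} - 4$, so it does not interfere, and the total valuation is exactly $4^{m+1} - 4$ as claimed. The auxiliary assertion $v(\eta_{m+1}) = 0$ follows immediately from $\eta_{m+1} = \eta_m^4 + (\eta_{m+1} - \eta_m^4)$ since the first term has valuation $0$ and the second has valuation $4^{m+1} - 4 > 0$. The main pitfall is simply the bookkeeping: one must keep straight that characteristic $2$ not only converts $-1$ into $+1$ but also allows the Frobenius to distribute over sums, and one must be certain that no cancellation occurs between $\delta_m^4$ and $x^{3\cdot 4^m}\eta_m$ beyond what the induction already predicts; the reduction to $(\eta_m - \eta_{m-1}^4) + x^{4^m}\eta_{m-1}^8$ makes this transparent by exhibiting the dominant term as the quantity controlled by the inductive hypothesis.
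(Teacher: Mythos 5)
Your proof is correct and follows essentially the same route as the paper: induction on $m$, using the recurrence together with the characteristic-$2$ Frobenius to reduce $v(\eta_{m+1}-\eta_m^4)$ to $v(\eta_m-\eta_{m-1}^4)$. The only difference is organizational: the paper groups the terms into the single identity $\eta_{m+1}-\eta_m^4= x^{3\cdot 4^m}(\eta_m-\eta_{m-1}^4)+x^{4^{m+1}}(\eta_m-\eta_{m-1}^4)^2$, which closes the induction without your auxiliary assertion $v(\eta_m)=0$, whereas your regrouping via $\delta_m$ is equivalent but requires carrying that extra fact along.
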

	\begin{proof}
	From $v(\eta_1)=0$ and $\eta_2-\eta_1^4=x^{12}\eta_1+x^{16}\eta_1^2$, the lemma holds when $m=1$. Using
	$$\eta_{m+1}-\eta_m^4= x^{3\cdot 4^m}(\eta_m-\eta_{m-1}^4)
	+x^{4^{m+1}}(\eta_m-\eta_{m-1}^4)^2$$
	thanks to the recursive formula for $\{\eta_m\}$
	and by induction, the lemma holds for every $m\in\N$.
	\end{proof}
	
	The next result shows that we have the desired example:
	\begin{proposition}
		There does not exist a finite set $\{t_1,\ldots,t_N\}$
		satisfying the following conditions:
		\begin{itemize}
			\item [(a)] $\cO[t_i]=\cO[s]$ for $1\leq i\leq N$;
			\item [(b)] for every $m\in\N$, there exist
			$i\in\{1,\ldots,N\}$, a power $q$ of $p$, elements
			$a\in K$ and $b\in\cO$ such that
			$z_m=at_i^q+b$.
		\end{itemize}	
	\end{proposition}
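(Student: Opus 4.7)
Suppose for contradiction that such a finite set $\{t_1,\dots,t_N\}$ exists. Since $\cO^*=\bF_2^*=\{1\}$, the equalities $\cO[z_m]=\cO[s]=\cO[t_i]$ force $\discr_K(z_m)=\discr_K(s)=\discr_K(t_i)=x^{12}$ (the last value comes from $P'(Y)=x^3$). By the pigeonhole principle, fix a single $t=t_i$ admitting, for infinitely many $m$, a representation $z_m=at^q+b$ with $a\in K^*$, $b\in\cO$, and $q$ a power of $2$. In characteristic $2$ one has $\discr_K(at^q+b)=a^{12}\discr_K(t)^q=a^{12}x^{12q}$; combined with $\discr_K(z_m)=x^{12}$ and the triviality of torsion in $K^*$ this forces $a=x^{1-q}$. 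Taking Galois conjugates over $K$ and using $s^{4^m}-s_j^{4^m}=(s-s_j)^{4^m}$ yields the key identity
\[(s-s_j)^{4^m}=x^{4^m-q}(t-t_j)^q,\qquad j=2,3,4.\]

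Projecting the associated multiplicative relation $((s-s_j)/x)^{4^m}=((t-t_j)/x)^q$ to the torsion-free quotient $L^*/\bF_{q(L)}^*$ forces the classes of $(s-s_j)/x$ and $(t-t_j)/x$ into a common rank-one subgroup. A direct calculation rules out $(s-s_j)/x$ being a root of unity: substituting a hypothetical $s_j=s-\alpha x$ with $\alpha\in\bar\bF_2^*$ into $P(s_j)=0$ produces $\alpha^4x^4+\alpha^2x^6+\alpha x^4=0$, impossible because the coefficients of $x^4$ and $x^6$ must vanish separately. Comparing the relation at two valid indices therefore shows that one of the following holds: $q\mid 4^m$ with $N:=4^m/q$ a fixed power of $2$ (Case 1), or $4^m\mid q$ with $M:=q/4^m\geq 2$ a fixed constant (Case 2). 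Case 2 is eliminated by an explicit coefficient computation: writing $b$ out, the condition $b\in\cO$ forces every coefficient of $x^i$ in $\eta_m$ to vanish for $0<i<4^m-1$; but the fact implicit in Lemma~\ref{lem:eta-eta^4} (via $\eta_{m+1}\equiv\eta_m^4\pmod{x^{4^{m+1}-4}}$ and $\eta_1=x+1$) is that the coefficient of $x^{4^{m-1}}$ in $\eta_m$ equals $1$, a contradiction.

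Reduced to Case 1 with a fixed $N$, the rank-one analysis together with $t\in K(s)$ and $K\cap\bF_{q(L)}^*=\bF_2^*=\{1\}$ writes $t=(s^N+\eta')/x^{N-1}$ for some fixed $\eta'\in\bF_2[x]$; then in characteristic $2$ with $q$ a power of $2$ one has $(s^N+\eta')^q=s^{Nq}+\eta'(x^q)=s^{4^m}+\eta'(x^q)$ (since $\eta'\in\bF_2[x]$), whence
\[b=\frac{\eta_m-\eta'(x^q)}{x^{4^m-1}},\]
so $b\in\cO$ is equivalent to $\tilde\eta_m:=\eta_m\bmod x^{4^m-1}\equiv\eta'(x^q)\pmod{x^{4^m-1}}$ in $\bF_2[x]$. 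From the defining recursion of $\eta_{m+1}$ (the summand $x^{4^{m+1}}\eta_m^2$ vanishes modulo $x^{4^{m+1}-1}$) one obtains the clean recursion $\tilde\eta_{m+1}=1+x^{4^m}+x^{3\cdot 4^m}\tilde\eta_m$, and induction yields $\deg\tilde\eta_m=4^m-3$ with leading coefficient $1$. Since $4^m-3$ is odd while $q$ is a power of $2$, the monomial $x^{4^m-3}$ can appear in $\eta'(x^q)\bmod x^{4^m-1}$ only when $q=1$, i.e.\ $N=4^m$; thus $m=\log_4 N$ is uniquely determined by $t$, so each $t_i$ serves for at most one $m$, contradicting property (b). The main technical obstacles will be the careful handling of the $\bF_{q(L)}^*$-constant when passing from the rank-one subgroup condition back to an honest equality $t=(s^N+\eta')/x^{N-1}$, and the precise coefficient-vanishing argument required to eliminate Case 2.
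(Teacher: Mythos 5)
Your proposal is correct, and it reaches the contradiction by a genuinely different endgame than the paper's. The shared core is the same: pigeonhole to fix one $t$ serving infinitely many $m$, the discriminant identity $\discr_K(at^q+b)=a^{d(d-1)}\discr_K(t)^q$ together with triviality of torsion in $K^*$ to pin down the leading coefficient as a power of $x$, the Galois-difference identity $(s-\sigma(s))^{4^m}=x^{4^m-q}(t-\sigma(t))^q$, and the computation $P(s)-P(\sigma(s))=\alpha^4x^4+\alpha^2x^6+\alpha x^4$ ruling out $s-\sigma(s)=\alpha x$. From there the paper eliminates $t$ entirely by comparing $z_m$ against $z_j$ for the minimal index $j$, proves $q_m/q_j=4^{m-j}$, and derives a contradiction between the exact valuation $v(\eta_m-\eta_j^{4^{m-j}})=4^{j+1}-4$ of Lemma~\ref{lem:eta-eta^4} and the lower bound $v(x^{4^m-1}d_m)\ge 4^m-4^{m-j}$; you instead keep $t$, use the rank-one structure (plus the fact that $q$-th powers kill the odd-order torsion $\bF_{q(L)}^*$) to derive the explicit shape $t=(s^N+\eta')/x^{N-1}$ or, in your Case 2, $s=x^{1-M}t^M+\beta$, and then finish with the clean recursion for $\tilde\eta_m=\eta_m\bmod x^{4^m-1}$ and the parity of $\deg\tilde\eta_m=4^m-3$. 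Your route buys an explicit description of what any admissible $t$ must look like and replaces the valuation estimate by a transparent coefficient argument, at the cost of a case split that the paper's normalization avoids. Two steps deserve fuller justification in a write-up: the Case 2 elimination silently uses the identity $s=x^{1-M}t^M+\beta$ with $v_x(\beta)\ge 0$ (so that $\beta^{4^m}$ is a constant modulo $x^{4^m}$ and $b\in\cO$ forces $\eta_m\equiv\mathrm{const}\pmod{x^{4^m-1}}$, contradicted by the coefficient $1$ of $x^{4^{m-1}}$), and the passage from the rank-one relation to the exact equality $t-t_j=(s-s_j)^N/x^{N-1}$ needs the observation that the ambient torsion has odd order while $q$ is a power of $2$; both close without difficulty.
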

	\begin{proof}
	Assume there exists such a finite set. Then there are 
	$t$ such that $\cO[t]=\cO[s]$ and
	an
	infinite subset $\cM\subseteq \N$ such that for every
	$m\in\cM$, there are a power $q_m$ of $p$, $a_m\in K$, and
	$b_m\in\cO$ such that 
	$z_m=a_mt^{q_m}+b_m$. After replacing $\cM$ by 
	an infinite subset if necessary, we may assume that 
	$q_m\leq q_n$ for $m,n\in\cM$ with $m<n$.
	
	Let $j$ be the smallest element of $\cM$, for every
	$m\in\cM$, we can write:
	\begin{equation}\label{eq:zmzj}
	z_m=\frac{a_m}{a_j^{q_m/q_j}}\left(a_jt^{q_j}+b_j\right)^{q_m/q_j}+b_m-\frac{a_m}{a_j^{q_m/q_j}}b_j^{q_m/q_j}=c_m z_j^{q_m/q_j}+d_m
	\end{equation}
	where $c_m=\displaystyle\frac{a_m}{a_j^{q_m/q_j}}$
	and $d_m=\displaystyle b_m-\frac{a_m}{a_j^{q_m/q_j}}b_j^{q_m/q_j}$. Recall that $v$ is the discrete valuation on $\cO$ with $v(x)=1$. 
	By comparing discriminant, we have $a_m^{12}=\discr_K(s)^{1-q_m}$ and $a_j^{12}=\discr_K(s)^{1-q_j}$. Therefore:
	\begin{equation}\label{eq:1-qm/qj}
	\left(\frac{a_m}{a_j^{q_m/q_j}}\right)^{12}=\discr_K(s)^{1-q_m/q_j}=x^{12(1-q_m/q_j)}.
	\end{equation}
	Hence $\displaystyle v\left(\frac{a_m}{a_j^{q_m/q_j}}\right)=1-\frac{q_m}{q_j}$. Since $b_m\in\cO$ for every $m\in\cM$, we have:
	\begin{equation}\label{eq:vdm}
		v(d_m)\geq 1-\frac{q_m}{q_j}\ \text{for $m\in\cM$}.
	\end{equation}
	 
	We can rewrite \eqref{eq:zmzj} as:
	\begin{equation}\label{eq:rewrite zmzj}
	\frac{s^{4^m}+\eta_m}{x^{4^m-1}}=c_m
	\left(\frac{s^{4^j}+\eta_j}{x^{4^j-1}}\right)^{q_m/q_j}+d_m.
	\end{equation} 
	\textbf{Claim:} $\displaystyle \frac{q_m}{q_j}=4^{m-j}$
	and $\displaystyle \frac{c_m}{x^{(4^j-1)q_m/q_j}}=\frac{1}{x^{4^m-1}}$. 
	Let $\sigma$ be a nontrivial embedding of $K(s)$
	into $\bar{K}$. Applying $\sigma$ to \eqref{eq:rewrite zmzj}
	and take the difference, we have:
	\begin{equation}\label{eq:apply sigma}
	\frac{(s-\sigma(s))^{4^m}}{x^{4^m-1}}=
	\frac{c_m}{x^{(4^j-1)q_m/q_j}}(s-\sigma(s))^{4^jq_m/q_j}.
	\end{equation}
	
	Consequently:
	\begin{equation}
	(s-\sigma(s))^{4^m-4^jq_m/q_j}=c_mx^{4^m-1-(4^j-1)q_m/q_j}
	\end{equation}
	
	Raising to the $12$-th power and using \eqref{eq:1-qm/qj},
	we have:
	$$(s-\sigma(s))^{12(4^m-4^jq_m/q_j)}=x^{12(4^m-4^jq_m/q_j)}.$$
	If $4^m-4^jq_m/q_j\neq 0$ then  
	$s-\sigma(s)=\zeta x$ for some $\zeta\in\bar{\bF}_2^*$, hence
	$0=P(s)-P(\sigma(s))=\zeta^4x^4+\zeta^2x^6+\zeta x^4$ which
	is impossible. Therefore we must have 
	$4^m-4^jq_m/q_j=0$, or $q_m/q_j=4^{m-j}$. Together
	with \eqref{eq:rewrite zmzj}, we have:
	$$\left(\frac{c_m}{x^{(4^j-1)q_m/q_j}}-\frac{1}{x^{4^m-1}}\right)s^{4^m}\in K.$$
	Since $s^{4^m}\notin K$, we must have $\displaystyle\frac{c_m}{x^{(4^j-1)q_m/q_j}}-\frac{1}{x^{4^m-1}}=0$. This proves the claim.
	
	From \eqref{eq:rewrite zmzj} and the claim above, we
	have:
	\begin{equation}\label{eq:etam etaj dm}
		\frac{\eta_m}{x^{4^m-1}}=\frac{\eta_j^{4^{m-j}}}{{x^{4^m-1}}}+d_m\ \text{for $m\in\cM$}.
	\end{equation}
	
	Applying Lemma~\ref{lem:eta-eta^4} repeatedly, we have 
	$v(\eta_m-\eta_j^{4^{m-j}})=4^{j+1}-4$. On the other
	hand, \eqref{eq:vdm} gives that 
	$v(x^{4^m-1}d_m)\geq 4^m-1+1-4^{m-j}=4^m-4^{m-j}$. Together
	with \eqref{eq:etam etaj dm}, we have
	$4^{j+1}-4\geq 4^m-4^{m-j}$. This gives a contradiction
	when $m$ is sufficiently large.
	\end{proof}

	\subsection{Proof of Corollary~\ref{cor:A}}
	Notation as in Corollary~\ref{cor:A}, there are at most
	$N(d)$ subextensions $F/K$ of $E/K$. For every such $F/K$, let
	$X(F)$ be the set of elements $t\in E$ integral over 
	$\cO_{K,T}$ such that 
	$\discr_K(t)\in\cO_{K,T}^*$ \emph{and} $K(t)=F$. 
	If $X(F)\neq \emptyset$, pick an element $s\in X(F)$. We
	have that $t\in X(F)$ if and only if 
	$\cO_{K,T}[t]=\cO_{K,T}[s]$. 
	Theorem~\ref{thm:A} gives that there are at most 
	$$q(K)^{d^6}+
		\left(\exp(18^{10})p^{3d^4|T|}\log_p q(K)\right)^{d^3}$$ 
	elements $t_1,\ldots,t_N\in X(F)$
	such that every $t\in X(F)$ has the form 
	$at_i^q+b$ for some $1\leq i\leq N$, power
	$q\geq 1$ of p, $a\in K^*$, and $b\in K$.  By comparing
	discriminant, we have that $a^{d(d-1)}\in \cO_{K,T}^*$, hence
	$a\in \cO_{K,T}^*$. Therefore $b\in \cO_{K,T}$ and this 
	finishes
	the proof.

	\section{Proof of Theorem~\ref{thm:B}}\label{sec:proof of B}
	\subsection{Notation and preliminary results}\label{subsec:notation B} 
	Throughout this section, assume 
	the notation in Theorem~\ref{thm:B}. Recall
	the condition that $\{s^n,t^n:\ n\in\N\}\cap \cO=\emptyset$
	(see Section~\ref{sec:addendum}).
	Let $L$ be
	the Galois closure of $K(s,t)/K$ and let $G$ denote the 
	radical in $L$ of the group generated by $\cO^*$
	and all the conjugates of $s$ and $t$. 
	Let $r$ denote the rank of $G$. Define 
	$e$ (respectively $f$) to be the smallest integer in
	$\N$ such that $K(s^e)\subseteq K(s^n)$ 
	(respectively $K(t^f)\subseteq K(t^n)$) for every $n\in\N$;
	in other words, $K(s^e)=\displaystyle\cap_{n\in \N} K(s^n)$
	(respectively $K(t^f)=\displaystyle\cap_{n\in\N} K(t^n)$). 
	For every  $1\leq k\leq e$ and $1\leq \ell\leq f$,  define 
	the set:
	$$\cM(k,\ell):=\{(m,n)\in \cM(\cO,s,t):\ m\equiv k \bmod e,\ n\equiv \ell
	\bmod f\}.$$
	
	As in the proof of Theorem~\ref{thm:A}, if $\cO[s^m]=\cO[t^n]$ 
	and $\sigma\Gal(L/K(s^m))$
	is a non-identity coset of $\Gal(L/K(s^m))$ in $\Gal(L/K)$,
	then
	$$u_{m,n,\sigma}:=\frac{s^m-\sigma(s^m)}{t^n-\sigma(t^n)}$$
	is a unit in the ring $\cO[s^m,\sigma(s^m)]$.
	
	\begin{lemma}\label{lem:new preserved}
		We have the following:
		\begin{itemize}
			\item [(a)] for every $(m,n)\in \cM(\cO,s,t)$, we have
			$(pm,pn)\in\cM(\cO,s,t)$;
			\item [(b)] $\gcd(e,p)=\gcd(f,p)=1$;
			\item [(c)] there is a power $q_1>1$ of $p$ such that
			 for every $(m,n)\in\cM(k,\ell)$, we have 
			 $(q_1m,q_1n)\in\cM(k,\ell)$.		
		\end{itemize}
	\end{lemma}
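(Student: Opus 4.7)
The plan is to handle the three parts in order, with part (b) being the key technical step.

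For part (a), I would exploit that $\cO$ has characteristic $p$, so the Frobenius endomorphism $x\mapsto x^p$ sends $\cO$ into $\cO^p\subseteq\cO$. Given $(m,n)\in\cM(\cO,s,t)$, write $s^m=P(t^n)$ and $t^n=Q(s^m)$ for some $P,Q\in\cO[X]$. Raising to the $p$-th power and using the identity $(a+b)^p=a^p+b^p$, one obtains $s^{pm}=P^{(p)}(t^{pn})$ and $t^{pn}=Q^{(p)}(s^{pm})$, where $P^{(p)}$ and $Q^{(p)}$ are the polynomials obtained by applying Frobenius to the coefficients. Since $\cO^p\subseteq\cO$, these polynomials lie in $\cO[X]$, hence $\cO[s^{pm}]=\cO[t^{pn}]$. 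Separability of $s^{pm}$ and $t^{pn}$ over $K$ is inherited from that of $s^m$ and $t^n$ via the injectivity of Frobenius on the distinct $\bar K$-conjugates.

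For part (b), the key input is the identity $K(s^{pn})=K(s^n)$ for every $n\in\N$. To see this, write $H_n:=\Gal(L/K(s^n))=\{\sigma\in\Gal(L/K):(\sigma(s)/s)^n=1\}$. In characteristic $p$ one has $X^{pn}-1=(X^n-1)^p$, so $\alpha^{pn}=1$ iff $\alpha^n=1$; hence $H_{pn}=H_n$, giving $K(s^{pn})=L^{H_{pn}}=L^{H_n}=K(s^n)$. Setting $F:=\bigcap_{n\in\N}K(s^n)$, the set $S:=\{n\in\N:s^n\in F\}$ is the positive part of a cyclic subgroup of $\Z$ with generator $e$ (and $F=K(s^e)$). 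If $p\mid e$, writing $e=pe'$ gives $K(s^{e'})=K(s^{pe'})=K(s^e)=F$, so $s^{e'}\in F$ and $e'\in S$, contradicting the minimality of $e$. Therefore $\gcd(e,p)=1$; the same argument yields $\gcd(f,p)=1$.

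For part (c), the plan is to combine (a) and (b). By (b), $\gcd(\mathrm{lcm}(e,f),p)=1$, so $p$ has finite multiplicative order modulo $\mathrm{lcm}(e,f)$. Choose a power $q_1>1$ of $p$ with $q_1\equiv 1\pmod{\mathrm{lcm}(e,f)}$. For any $(m,n)\in\cM(k,\ell)$, iterated application of (a) gives $(q_1m,q_1n)\in\cM(\cO,s,t)$, while the congruences $q_1m\equiv m\equiv k\pmod{e}$ and $q_1n\equiv n\equiv\ell\pmod{f}$ show that $(q_1m,q_1n)\in\cM(k,\ell)$. The main obstacle is the Galois-theoretic setup in part (b), particularly the identity $K(s^{pn})=K(s^n)$ that underlies the descent step; once this is in place, parts (a) and (c) follow essentially formally.
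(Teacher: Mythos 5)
Your proposal is correct and follows essentially the same route as the paper: part (a) via applying Frobenius to the polynomial identities, part (b) via the observation that in characteristic $p$ an element $\gamma$ satisfies $\gamma^{p\alpha}=1$ iff $\gamma^{\alpha}=1$ (your identity $H_{pn}=H_n$ is exactly the paper's step that any $\tau$ fixing $s^{e}$ with $e=p\alpha$ must fix $s^{\alpha}$), and part (c) by choosing $q_1\equiv 1$ modulo $\mathrm{lcm}(e,f)$, which is a harmless strengthening of the paper's condition $q_1k\equiv k\pmod e$, $q_1\ell\equiv\ell\pmod f$. No gaps.
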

	\begin{proof}
		Part (a) follows from the easy fact that if 
		$\cO[s^m]=\cO[t^n]$ then $\cO[s^{pm}]=\cO[t^{pn}]$.
		
		We prove $\gcd(e,p)=1$ by contradiction, the identity 
		$\gcd(f,p)=1$
	could be proved by similar arguments. Assume $e=p\alpha$ with 
	$\alpha\in \N$. By
	the minimality of $e$, we have that $K(s^{\alpha})$ strictly
	contains $K(s^e)$. Pick $\tau\in\Gal(L/K(s^e))$ outside $\Gal(L/K(s^{\alpha}))$. 
	We have: $(s/\tau(s))^e=1$, hence $(s/\tau(s))^{\alpha}=1$ since $e=p\alpha$.  
	Hence $\tau$ fixes $s^{\alpha}$, contradiction. This
	proves part (b).
	
		For part (c), we choose $q_1$ such that
		$q_1k\equiv k$ modulo $e$ and $q_1\ell\equiv \ell$ modulo $f$.
		This is possible by part (b).		
	\end{proof}

	\begin{definition}\label{def:Frobenius}
	We have the following definitions.
	\begin{itemize}
		\item [(a)] Let $M\in\N$, $\bfx\in\N^M$, and $q>1$ be
		a power of $p$. The $q$-Frobenius subset of $\N^M$
		generated by $\bfx$ is defined to be:
		$$F_1(q;\bfx):=\{q^k\bfx:\ k\in\N_0\}.$$
		
		\item [(b)] Let $(a,b)\in\N^2$, the doubly $q$-Frobenius
		subset of $\N^2$ generated by
		$(a,b)$ is defined to be:
		$$F_2(q;a,b):=\{(q^ia,q^jb):\ i,j\in\N_0\}.$$
	\end{itemize}
		We say that $q$ is the \emph{base} of $F_1(q;\bfx)$
		and $F_2(q;a,b)$. A Frobenius subset of $\N^M$ 
		(respectively
		doubly Frobenius subset of $\N^2$)
		is a set of the form $F_1(q;\bfx)$
		(respectively $F_2(q;a,b)$).
	\end{definition}
		
	\begin{remark}
	Sets of the form $F(q;a_1,a_2,a_3,a_4)$
	as defined in Definition~\ref{def:non-degenerate} include
	Frobenius subsets of $\N^2$ (when $a_2=a_4=0$)
	and doubly Frobenius subsets of $\N^2$ (when $a_2=a_3=0$)
	as special cases. 
	\end{remark}

	Our proof of Theorem~\ref{thm:B} will be divided into two 
	cases.
	
	\subsection{The case when $K(s^e)\neq K(t^f)$.}
	In this subsection, we prove the following: 
	\begin{proposition}\label{prop:B2}
	Let $s$ and $t$ be as in Theorem~\ref{thm:B}. Assume that 
	$K(s^e)\neq K(t^f)$. 
	Then the set $\cM:=\cM(\cO,s,t)$ is a finite union of Frobenius and 
	doubly Frobenius subsets of $\N^2$.	
	\end{proposition}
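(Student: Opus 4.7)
The plan is to fix a residue class $\cM(k,\ell)$ using Lemma~\ref{lem:new preserved} and extract a Frobenius pattern from one well-chosen unit equation per class. For $(m,n)\in\cM(k,\ell)$, the hypothesis $\cO[s^m]=\cO[t^n]$ implies $K(s^m)=K(t^n)$ inside $L$, and for any $\sigma\in\Gal(L/K)$ with $\sigma(s^m)\neq s^m$ the quotient $u_\sigma := (s^m-\sigma(s^m))/(t^n-\sigma(t^n))$ is a unit in $\cO[s^m,\sigma(s^m),t^n,\sigma(t^n)]$. Setting $\zeta_\sigma := \sigma(s)/s$ and $\eta_\sigma := \sigma(t)/t$, both in $G$, the identity becomes $s^m(1-\zeta_\sigma^m) = u_\sigma\,t^n(1-\eta_\sigma^n)$.

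The hypothesis $K(s^e)\neq K(t^f)$ is then used to pick $\sigma$ asymmetrically. Using the identity $K(s^e)=\bigcap_n K(s^n)$, the condition $\sigma\in\Gal(L/K(s^e))$ is equivalent to $\zeta_\sigma$ being a root of unity (hence lying in $\bF_{q(L)}^*$), and similarly for $K(t^f)$ and $\eta_\sigma$. Since these two subgroups of $\Gal(L/K)$ differ, after possibly swapping $s\leftrightarrow t$ fix $\sigma_0\in\Gal(L/K(s^e))\setminus\Gal(L/K(t^f))$; then $\zeta_0:=\zeta_{\sigma_0}$ is a root of unity while $\eta_0:=\eta_{\sigma_0}$ has infinite order in $L^*$. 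The value $\xi:=\zeta_0^m$ depends only on $m\bmod e$, hence only on $k$, and $\xi\neq 1$ (otherwise $\sigma_0$ would fix $s^m$ but not $t^n$, contradicting $K(s^m)=K(t^n)$). Dividing the factored identity by $u_{\sigma_0}t^n$ yields the unit equation $X+Y=1$ with $X:=s^m(1-\xi)/(u_{\sigma_0}\,t^n)=1-\eta_0^n$ and $Y:=\eta_0^n$, both in the finitely generated group $\tilde G:=G\cdot\cO_L^*$.

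Applying Proposition~\ref{prop:refined 2}(a) to $\tilde G$ yields $(X,Y)=(X_0^{p^j},Y_0^{p^j})$ for $(X_0,Y_0)$ in a finite set $\scrX\subset L^*\times L^*$ and $j\in\N_0$. Since $\eta_0$ has infinite order, the identity $\eta_0^n=Y_0^{p^j}$ forces $n=n_0(Y_0)\cdot p^j$ for a rational $n_0(Y_0)$ determined by $Y_0$; substituting into $X=X_0^{p^j}$ and using $t^n=(t^{n_0})^{p^j}$ yields $s^m\equiv(t^{n_0(Y_0)}X_0)^{p^j}\pmod{\cO_L^*}$. The standing hypothesis $\{s^n\}\cap\cO=\emptyset$ together with $\cO$ being integrally closed in $K$ implies that the map $m\mapsto s^m\cdot\cO^*$ is injective on $\N$; combined with bookkeeping in $\tilde G/\cO^*$, this extracts $m=m_0(X_0,Y_0)\cdot p^j$ whenever $t^{n_0}X_0$ lies in the appropriate $\Q$-span of $s$ modulo units, and gives no contribution otherwise. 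Hence each admissible $(X_0,Y_0)$ produces a Frobenius set $F_1(p;(m_0,n_0))$, and the finite union over residue classes $(k,\ell)$ and $(X_0,Y_0)\in\scrX$ exhibits $\cM$ as a finite union of Frobenius subsets (a fortiori of Frobenius and doubly Frobenius subsets).

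The main obstacle is the final bookkeeping that extracts the Frobenius exponent of $m$ from the congruence $s^m\equiv(t^{n_0}X_0)^{p^j}\pmod{\cO_L^*}$. This is delicate because $s$ can lie in $\cO_L^*\setminus\cO^*$, so that injectivity of $m\mapsto s^m$ fails modulo $\cO_L^*$ even though it holds modulo the smaller $\cO^*$; one must compare the two quotients and use the torsion-free structure of $\tilde G/\cO^*$ carefully to pin down $m$ cleanly within each Frobenius progression.
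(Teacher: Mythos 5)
Your reduction to the unit equation $X+Y=1$ with $X=1-\eta_0^n$, $Y=\eta_0^n$ is sound, and the extraction $n=n_0(Y_0)p^j$ from $\eta_0^n=Y_0^{p^j}$ works because $\eta_0$ is non-torsion. The proof breaks at the step you yourself flag, and the obstacle is not a repairable piece of bookkeeping. The relation you obtain is
$s^m=\frac{u_{m,n,\sigma_0}}{1-\xi}\,(t^{n_0}X_0)^{p^j}$, where $u_{m,n,\sigma_0}$ is a unit of the ring $\cO[s^m,\sigma_0(s^m)]$ that \emph{varies with} $(m,n)$; the only uniform statement is that it lies in $\cO_L^*$. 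So the only congruence available is modulo $\cO_L^*$, and when $s\in\cO_L^*$ (a case that genuinely occurs and that the paper treats as a real possibility, cf.\ Proposition~\ref{prop:non-unit} where $s,t\in\cO_L^*$ is one of the cases) this congruence carries no information about $m$ whatsoever. The injectivity of $m\mapsto s^m\cO^*$, which you invoke, is true but unusable: there is no way to descend the ambient unit from $\cO_L^*$ to $\cO^*$, since $u_{m,n,\sigma_0}$ has no reason to lie in $\cO^*$ times anything controlled.

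The deeper problem is that your method, if completed, would prove too much. Because you synchronize $m$ and $n$ through a single exponent $p^j$ coming from one solution of one unit equation, your conclusion would be that $\cM$ is a finite union of \emph{singly} Frobenius sets $\{p^j(m_0,n_0)\}$. But the proposition deliberately allows doubly Frobenius components, and these do arise: if $\cO[s^{\tilde m}]=\cO[t^{n_1}]=\cO[t^{qn_1}]$ (exactly the configuration of Example~\ref{ex:1st}, which the paper's own proof produces in its ``unbounded fiber'' case), then $\{(\tilde m q^i,n_1q^j):i,j\in\N_0\}\subseteq\cM$, and this set cannot be covered by finitely many singly Frobenius sets since the ratio $n/m$ takes infinitely many values on it. So the gap you flag is precisely where a correct argument must bifurcate. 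The paper avoids this by decoupling the two coordinates: it first shows $\pi_1(\cM)$ is a finite union of $p$-Frobenius subsets of $\N$ (Proposition~\ref{prop:pi_1 pi_2}, using the hypothesis $K(s^e)\nsubseteq K(t^f)$ much as you use it to choose $\sigma_0$), then analyzes each fiber $\cM_1(m)$ with a second, independent unit equation (Proposition~\ref{prop:C1}), and finally runs a bounded/unbounded dichotomy on the fiber sizes; the unbounded case is what forces the doubly Frobenius sets into the conclusion. You would need to graft something like that fiberwise analysis onto your argument to handle the case where $s$ (or, after your swap, $t$) is an $\cO_L$-unit.
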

		
	We start with an easy lemma:
	\begin{lemma}\label{lem:easy Frobenius}
	Let $r\in\N$ and let $Z$ be a nonempty subset of $\N^k$. Assume
	that $Z$ is contained in a finite union of Frobenius subsets
	of $\N^k$ and there is $q>1$ which is a power of $p$
	such that $qZ\subset Z$. Then $Z$ is a finite union of
	Frobenius subsets of base $q$.
	\end{lemma}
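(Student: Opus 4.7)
My plan is to show that every element of $Z$ lies in the $q$-orbit of a $q$-minimal element and then to bound the number of such $q$-minimal elements using the given finite covering. Concretely, set
$$Z_{\min}:=\{\bfy\in Z:\bfy\notin qZ\}.$$
The hypothesis $qZ\subseteq Z$ immediately gives that for every $\bfy\in Z$ the Frobenius subset $F_1(q;\bfy)=\{q^m\bfy:m\in\N_0\}$ is contained in $Z$. Since the coordinates of $q^m\bfy$ are strictly increasing in $m$, each $q$-orbit in $Z$ has a unique smallest element, which lies in $Z_{\min}$. Hence $Z=\bigcup_{\bfy\in Z_{\min}}F_1(q;\bfy)$, and it is enough to prove $Z_{\min}$ is finite.

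To bound $|Z_{\min}|$, I will use the given covering $Z\subseteq\bigcup_{i=1}^N F_1(q_i;\bfx_i)$. Write $q=p^a$ and $q_i=p^{b_i}$, and set $a_i:=a/\gcd(a,b_i)$. I will prove the key claim that
$$|Z_{\min}\cap F_1(q_i;\bfx_i)|\leq a_i\qquad\text{for each }i,$$
which, summed over the finitely many $i$, yields the desired finiteness.

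For this claim, suppose $q_i^{j_1}\bfx_i$ and $q_i^{j_2}\bfx_i$ both lie in $Z_{\min}$ with $j_1<j_2$. If $j_1\equiv j_2\pmod{a_i}$, then a short computation with exponents of $p$ shows that $(j_2-j_1)b_i$ is a positive multiple of $a$, so $q_i^{j_2}\bfx_i=q^m\cdot q_i^{j_1}\bfx_i$ for some $m\in\N$. But this contradicts the $q$-minimality of $q_i^{j_2}\bfx_i$, since $q_i^{j_1}\bfx_i\in Z$. Consequently the exponents $j$ contributing to $Z_{\min}\cap F_1(q_i;\bfx_i)$ occupy pairwise distinct residue classes modulo $a_i$, giving the claim.

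I do not expect any serious obstacle: the whole argument rests on the strict coordinate-wise monotonicity of $q$-orbits in $\N^k$ together with the elementary divisibility calculation controlling when two points of a single $F_1(q_i;\bfx_i)$ lie in one $q$-orbit. The only mildly subtle point is to make sure one uses $q$-minimality with respect to all of $Z$ (not just within a given $F_1(q_i;\bfx_i)$), which is automatic since $q_i^{j_1}\bfx_i$ itself lies in $Z$.
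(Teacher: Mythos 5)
Your proof is correct and rests on essentially the same idea as the paper's: the paper first refines the given cover so that each Frobenius subset has base a power of $q$ (which splits each $F_1(q_i;\bfx_i)$ with $q_i=p^{b_i}$ into exactly $a_i=a/\gcd(a,b_i)$ pieces, the same count as your residue-class bound) and then takes the minimal element of $Z$ in each piece, whereas you package the same descent by defining $Z_{\min}=Z\setminus qZ$ globally and bounding $|Z_{\min}\cap F_1(q_i;\bfx_i)|$ directly. Both arguments hinge on the strict coordinate-wise growth of $q$-orbits in $\N^k$ and the elementary divisibility fact that $q_i^{j_2-j_1}$ is a power of $q$ exactly when $j_2\equiv j_1\pmod{a_i}$, so your write-up is a valid (and somewhat more explicit) rendering of the paper's proof.
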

	\begin{proof}
	We may assume that $Z$ is contained in a finite disjoint
	union of Frobenius subsets of $\N$ whose bases are powers of $q$. 
	Denote
	these Frobenius subsets by
	$F_1,\ldots,F_{n}$. We may assume $Z\cap F_i\neq \emptyset$
	and let $\bfx_i$ be the minimal element in $Z\cap F_i$ for $1\leq i\leq n$.  
	Then we have:
	$$Z=\bigcup_{i=1}^n \{q^n\bfx_i:\ n\in\N_0\}.$$
	\end{proof}

	We have the following:
	\begin{proposition}\label{prop:C1}
    There is a constant $C_1$ such that the following hold. 
		\begin{itemize}
		\item [(a)] For every
			$m\in\pi_1(\cM)$, for every subset
			of at least $C_1$ elements in
			$$\cM_1(m):=\{n\in\N:\ (m,n)\in\cM\}$$
			there exist $n_1<n_2$
			such that $\displaystyle\frac{n_2}{n_1}$ is a power of $p$.
		\item [(b)] For every $n\in\pi_2(\cM)$, 
		for every subset of at least $C_1$ elements in
		$$\cM_2(n):=\{m\in\N:\ (m,n)\in\cM\}$$
		there exist $m_1<m_2$ in $\cM_2(n)$ such that 
		$\displaystyle\frac{m_2}{m_1}$ is a power of $p$.
		\end{itemize}
	\end{proposition}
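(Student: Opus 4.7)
The plan is to apply Theorem~\ref{thm:A} with the element $s^{m}$ in the role of ``$s$'' for part~(a), and symmetrically with $t^{n}$ for part~(b). Since $[K(s^{m}):K]\le [K(s):K]$, the bound $N$ from Theorem~\ref{thm:A} is uniformly bounded in $m$ by a constant $N_0=N_0(q(K),|S|,[K(s):K])$; this uniformity is what permits $C_1$ to be chosen independently of $m$ (and similarly in part~(b) independently of $n$).

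Fix $m\in\pi_1(\cM)$ and let $z_1,\ldots,z_{N_0}$ be the representatives produced by Theorem~\ref{thm:A} for $s^{m}$. Each $n\in\cM_1(m)$ satisfies $\cO[t^{n}]=\cO[s^{m}]$, so
\[
t^{n}=a_{n}\,z_{i(n)}^{q_{n}}+b_{n}
\]
with $q_{n}$ a power of $p$ and $a_{n},b_{n}\in K$ satisfying the discriminant condition. Setting $C_1:=N_0+1$ (to be enlarged by an absolute constant below), pigeonhole forces any subset of $\cM_1(m)$ of size at least $C_1$ to contain two indices $n_1<n_2$ with $i(n_1)=i(n_2)$. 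Assuming first $q_{n_1}\le q_{n_2}$ and setting $q:=q_{n_2}/q_{n_1}$, eliminating $z_{i(n_1)}$ via the Frobenius identity $(X-\beta)^{q}=X^{q}-\beta^{q}$ in characteristic $p$ yields
\[
t^{n_2}=c\,t^{n_1 q}+d,\qquad c=a_{n_2}/a_{n_1}^{q}\in K^{*},\quad d=b_{n_2}-c\,b_{n_1}^{q}\in K.
\]
The opposite subcase $q_{n_1}>q_{n_2}$ gives an analogous relation, which by the integrality argument below is automatically forced into the harder case.

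The clean case is $d=0$: then $t^{n_2-n_1q}=c\in K^{*}$, and since $t^{k}$ is integral over $\cO$ for $k\ge 1$ while $\cO$ is integrally closed, the standing hypothesis $\{t^{k}:k\in\N\}\cap\cO=\emptyset$ forces $n_2=n_1q$, so $n_2/n_1=q$ is a power of $p$ as required. The main obstacle is the case $d\ne 0$. Here, applying any $\sigma\in\Gal(L/K)$ to $t^{n_2}-c\,t^{n_1 q}=d$ and subtracting gives $t^{n_2}-\sigma(t^{n_2})=c\,(t^{n_1}-\sigma(t^{n_1}))^{q}$ (by Frobenius), and combining with the unit relations $(s^{m}-\sigma(s^{m}))/(t^{n_j}-\sigma(t^{n_j}))\in\cO[s^{m},\sigma(s^{m})]^{*}$ for $j=1,2$ shows that $c\cdot(s^{m}-\sigma(s^{m}))^{q-1}$ is a unit in $\cO[s^{m},\sigma(s^{m})]^{*}$ for every $\sigma$ not fixing $s^{m}$. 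I would then combine this valuation-theoretic constraint on $c$ with the standing assumption $K(s^{e})\ne K(t^{f})$ of this subsection (which prevents the Galois orbits of $s^{m}$ and $t^{n}$ from being too closely aligned) to bound by an absolute constant the number of within-orbit index-pairs arising from the $d\ne 0$ case, and then enlarge $C_1$ by this constant. Part~(b) follows by the symmetric argument with $(t,n)$ in place of $(s,m)$.
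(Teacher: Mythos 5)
Your reduction to Theorem~\ref{thm:A} is sound up to a point: applying it to $s^{m}$ (whose degree over $K$ is indeed at least $2$ by the standing hypothesis and integral closedness of $\cO$, and at most $[K(s):K]$, so the count $N_0$ of representatives is uniform in $m$), pigeonholing on the representative index, eliminating $z_{i}$ by Frobenius, and disposing of the case $d=0$ via $t^{k}\notin\cO$ are all correct. But the argument collapses exactly where you admit it is hardest: the case $d\neq 0$. There you only state an intention (``I would then combine this valuation-theoretic constraint \dots to bound by an absolute constant the number of within-orbit index-pairs''), with no actual argument. The constraint you derive, that $c\cdot(s^{m}-\sigma(s^{m}))^{q-1}\in\cO[s^{m},\sigma(s^{m})]^{*}$, is genuinely available, but it is vacuous when $q=1$ (i.e.\ $q_{n_1}=q_{n_2}$, where the relation reads $t^{n_2}-c\,t^{n_1}\in K$), and in general nothing you write rules out arbitrarily many such pairs; note that the paper's own example in Subsection~\ref{subsec:example} shows the $b\notin\cO$ (hence $d\neq0$) phenomenon really occurs for Theorem~\ref{thm:A}, so this case cannot be waved away. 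The appeal to $K(s^{e})\neq K(t^{f})$ is also unmotivated --- the statement and the paper's proof of Proposition~\ref{prop:C1} do not use that hypothesis at all. Finally, even granting a bound $B$ on the number of bad pairs, ``enlarge $C_1$ by this constant'' is not the right combinatorial step: you would need to choose $C_1$ so that every subset of size $C_1$ contains, within a single representative class, more than $B$ pairs, i.e.\ $\binom{\lceil C_1/N_0\rceil}{2}>B$.

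For comparison, the paper's proof avoids Theorem~\ref{thm:A} entirely and is much shorter: choosing $\sigma$ fixing no power of $t$, each $n\in\cM_1(m)$ yields a solution of $x+y=1$ in a group $\Gamma$ of rank at most $r+1$ via $u_{m,n,\sigma}\bigl(t^{n}/(s^{m}-\sigma(s^{m})),\,-\sigma(t^{n})/(s^{m}-\sigma(s^{m}))\bigr)$; Proposition~\ref{prop:refined 2} then forces $t^{n}/\sigma(t^{n})=u^{p^{\alpha}}$ with $u$ in a set of size at most $p^{2r+2}$, and since $t/\sigma(t)$ is not a root of unity, two indices $n_1<n_2$ sharing the same $u$ satisfy $n_1p^{\alpha_2}=n_2p^{\alpha_1}$. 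Tracking the quotient $t^{n}/\sigma(t^{n})$, which is an exact power of the non-torsion element $t/\sigma(t)$, is what eliminates the additive ``$+d$'' ambiguity that blocks your approach. To salvage your route you would need to supply a genuine finiteness argument for the $d\neq 0$ pairs; as written, the proof is incomplete.
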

	\begin{proof}
	It suffices to prove (a) only since (b) is completely analogous. 
	Since $t^k\notin \cO$ for every $k\in\N$, there exists
	$\sigma\in\Gal(L/K)$ such that
	$\sigma$ does not fix $t^k$ for every $k\in\N$.
	Recall that when $(m,n)\in\cM$, we have that 
	$\displaystyle u_{m,n,\sigma}:=\frac{s^m-\sigma(s^m)}{t^n-\sigma(t^n)}$
	is an element of $G$.
	
	Let $\Gamma$ be the group generated by $G$ and 
	$s^m-\sigma(s^m)$.	Hence the rank of $\Gamma$ is
	at most $r+1$.
	We have that $u_{m,n,\sigma}(t^n/(s^m-\sigma(s^m)),-\sigma(t^n)/(s^m-\sigma(s^m)))$ gives a solution of $x+y=1$ with $(x,y)\in \Gamma^2$. By Proposition~\ref{prop:refined 2},
	there is a finite subset $\scrX$ of $L^*$ such that
	$|\scrX|\leq p^{2r+2}$ and 
	$$\frac{t^n}{\sigma(t^n)}=u^{q^{\alpha}}$$
	for some $u\in \scrX$ and $\alpha\in\N_0$. 
	
	Now we can take $C_1=p^{2r+2}+1$.  For any
	$C_1$ distinct elements of $\cM_1(m)$, there are two elements
	$n_1<n_2$ such that there exist $u\in X$ and $\alpha_1<\alpha_2$ such that 
	$$\frac{t^{n_1}}{\sigma(t^{n_1})}=u^{p^{\alpha_1}}\ \text{and }
	\frac{t^{n_2}}{\sigma(t^{n_2})}=u^{p^{\alpha_2}}.$$
	Note that $\frac{t}{\sigma(t)}\notin \bar{\bF}_p^*$
	since $\sigma$ does not fix any power of $t$. Hence $n_1p^{\alpha_2}=n_2p^{\alpha_1}$.
	\end{proof}

\begin{proposition}\label{prop:pi_1 pi_2} The following results hold.
	\begin{itemize} 
	\item [(a)] If $K(s^e)\nsubseteq K(t^f)$ then the set $\pi_1(\cM)$ is a finite union of $p$-Frobenius subsets of $\N$.
	\item [(b)] If $K(t^f)\nsubseteq K(s^e)$ then the set
	$\pi_2(\cM)$ is a finite union of $p$-Frobenius subsets of $\N$.
	\end{itemize}
	\end{proposition}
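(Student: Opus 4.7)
The plan is to establish part (a); part (b) will follow by interchanging the roles of $(s,e)$ and $(t,f)$. For part (a), I would start by exploiting the hypothesis $K(s^e)\not\subseteq K(t^f)$ to pick an automorphism $\sigma\in\Gal(L/K(t^f))$ with $\sigma(s^e)\neq s^e$, and introduce $\alpha:=\sigma(s)/s$ and $\omega:=\sigma(t)/t$. Since $\sigma$ fixes $t^f$, $\omega^f=1$, so $\omega$ is a root of unity of order dividing $f$ (and coprime to $p$ by Lemma~\ref{lem:new preserved}(b)); in particular $\omega^n$ takes only finitely many values as $n$ ranges over $\N$. A key preliminary claim is that $\alpha$ must have infinite order in $L^*$: indeed, if $\alpha^{c_0}=1$ then $\sigma$ fixes $s^{c_0}$, hence fixes $K(s^{c_0})$, and since $K(s^e)=\bigcap_n K(s^n)\subseteq K(s^{c_0})$, $\sigma$ would fix $s^e$, contradicting our choice.

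Next, for any $(m,n)\in\cM$ I would use $K(s^m)=K(t^n)$ to argue that $\sigma$ fixes $t^n$ iff it fixes $s^m$; combined with the infinite order of $\alpha$, this forces $\omega^n\neq 1$ and $\alpha^m\neq 1$. Following the unit-equation setup of Proposition~\ref{prop:C1}, the element $u_{m,n,\sigma}:=(s^m-\sigma(s^m))/(t^n-\sigma(t^n))$ lies in $G$, and the identity $s^m(1-\alpha^m)=u_{m,n,\sigma}(1-\omega^n)t^n$ rearranges into
\begin{equation*}
\alpha^m+v=1,\qquad v:=\frac{(1-\omega^n)\,u_{m,n,\sigma}\,t^n}{s^m}\in G.
\end{equation*}
Thus $(\alpha^m,v)\in G^2$ solves the unit equation $X+Y=1$, and I would invoke Proposition~\ref{prop:refined 2}(b) to produce a finite set $\scrX'\subset L^*\times L^*$ (of size at most $p^{2r}$) such that $\alpha^m=x_0^{p^k}$ for some $(x_0,y_0)\in\scrX'$ and $k\in\N_0$.

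The last step extracts a $p$-Frobenius structure on $m$. Passing to the torsion-free quotient $L^*/\mu(L)$ and using that $\bar\alpha$ has infinite order, the equation $m\bar\alpha=p^k\bar x_0$ has a solution $m\in\N$ only when $\bar x_0$ is a rational multiple of $\bar\alpha$ with denominator a power of $p$; when such an $x_0$ occurs, $m$ is forced into a single $p$-Frobenius subset of the form $\{ap^j:j\in\N_0\}$ for an integer $a$ determined by $x_0$. Aggregating over the finitely many values of $\omega^n$ and of $x_0\in\pi_1(\scrX')$ shows that $\pi_1(\cM)$ is contained in a finite union of $p$-Frobenius subsets of $\N$. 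Since $p\cdot\pi_1(\cM)\subseteq\pi_1(\cM)$ by Lemma~\ref{lem:new preserved}(a), Lemma~\ref{lem:easy Frobenius} upgrades this to the desired conclusion that $\pi_1(\cM)$ is a finite union of $p$-Frobenius subsets of $\N$.

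The main technical point to handle carefully is the infinite-order claim for $\alpha$; this is where the hypothesis $K(s^e)\not\subseteq K(t^f)$ is essential (the weaker $K(s^e)\neq K(t^f)$ would not suffice). Without it, $\alpha$ could be a root of unity, $\alpha^m$ would take only finitely many values, and the Frobenius extraction would collapse to arithmetic progressions in $m$ rather than $p$-Frobenius subsets.
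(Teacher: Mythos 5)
Your proof is correct and follows essentially the same route as the paper's: the same choice of $\sigma\in\Gal(L/K(t^f))\setminus\Gal(L/K(s^e))$ (with the same justification that $\sigma$ can fix no power of $s$), the same two-term unit equation derived from $s^m-\sigma(s^m)=u_{m,n,\sigma}(t^n-\sigma(t^n))$, the same appeal to Proposition~\ref{prop:refined 2} together with the infinite order of $s/\sigma(s)$ to force ratios of the $m$'s to be powers of $p$, and the same finish via $p\cdot\pi_1(\cM)\subseteq\pi_1(\cM)$ and Lemma~\ref{lem:easy Frobenius}. The only minor variation is bookkeeping: the paper partitions $\cM$ by $n\bmod f$ and enlarges $G$ by the fixed element $t^\ell-\sigma(t^\ell)$, whereas you factor $t^n-\sigma(t^n)=(1-\omega^n)t^n$ and absorb the root of unity $1-\omega^n$ into the torsion of $G$, which works equally well.
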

\begin{proof}
It suffices to prove (a) only.  There exists
$\sigma\in \Gal(L/K(t^f))$ such that $\sigma\notin \Gal(L/K(s^e))$.
For every $1\leq \ell\leq f$,
define 
$\cM(\cdot,\ell):=\{(m,n)\in\cM:\ n\equiv \ell\bmod f\}.$ By
our assumption, we have $\cM(\cdot,f)=\emptyset$.

Fix any $1\leq \ell< f$, let $(m,n)\in \cM(\cdot,\ell)$, and write
$n=\tilde{n}f+\ell$. As before, we have $u_{m,n,\sigma}\in G$ such that:
	$$0\neq s^m-\sigma(s^m)=u_{m,n,\sigma} (t^n-\sigma(t^n))=u_{m,n,\sigma}t^{\tilde{n}f}(t^{\ell}-\sigma(t^{\ell})).$$
Let $\Gamma$ be the group generated by $G$ and $t^{\ell}-\sigma(t^{\ell})$ whose rank is at most $r+1$.
The above identity gives that $\displaystyle\frac{1}{u_{m,n,\sigma}t^{\tilde{n}f}(t^{\ell}-\sigma(t^{\ell}))}(s^m,-\sigma(s^m))$
is a solution of the equation $x+y=1$ with $(x,y)\in G^2$. Note that
$\displaystyle\frac{s}{\sigma(s)}\notin \bar{\bF}_p^*$ since $\sigma$ cannot fix any power of $s$. Write $C_2=p^{2r+2}+1$.
 By using Proposition~\ref{prop:refined 2} as in the proof of Proposition~\ref{prop:C1},
we have that for every subset of at least $C_2$ elements of 
$\pi_1(W(\cdot,\ell))$,
there exist $m_1<m_2$  such that
$\displaystyle\frac{m_2}{m_1}$
is a power of $p$. Hence the same conclusion holds for every subset of at least $fC_2$ elements of $\pi_1(W)$. 
By Lemma~\ref{lem:new preserved}, if $m\in\pi_1(\cM)$ then $pm\in \pi_1(\cM)$. This implies that $\pi(W)$ is the union of
at most $fC_2$ many $p$-Frobenius subsets.
\end{proof}
	
	\begin{proof}[Proof of Proposition~\ref{prop:B2}]
We may assume $K(s^e)\nsubseteq K(t^f)$
since the case $K(t^f)\nsubseteq K(s^e)$ is similar. By Proposition~\ref{prop:pi_1 pi_2}, the set $\pi_1(\cM)$ is
a (disjoint) union of finitely many $p$-Frobenius subsets $F_1,\ldots,F_k$
of $\N$. Fix any $i$ such that $1\leq i\leq k$,
it suffices to show that 
the set:
$$\cM\cap F_i\times \N=\{(m,n)\in \cM:\ m\in F_i\}$$
is contained in finitely many
Frobenius and doubly Frobenius subsets \emph{of $\cM$}.

Recall the notation $\cM_1(m)$ and the constant $C_1$ in Proposition~\ref{prop:C1}. There are two cases:

\textbf{Case 1:} $\{|\cM_1(m)|:\ m\in F_i\}$ is bounded. Let $m_i\in F_i$
be such that $M:=|\cM_1(m_i)|=\max\{|\cM_1(m)|:\ m\in F_i\}$.
Denote 
$\cM_1(m_i):=\{n_1,\ldots,n_M\}.$
For every $m\in F_i$ satisfying $m>m_i$, write $m=p^vm_i$ for some $v\in\N$. By Lemma~\ref{lem:new preserved} 
 and the maximality of $|\cM_1(m_i)|$, we
conclude that:
$$\cM_1(m)=\{p^vn_1,\ldots,p^vn_M\}.$$
Hence the set $\{(m,n)\in \cM\cap F_i\times \N:\ m\geq m_i\}$ 
is a finite union of $p$-Frobenius subsets. The
set $\{(m,n)\in \cM\cap F_i\times\N:\ m<m_i\}$ is finite by our assumption in this case, hence, by Lemma~\ref{lem:new preserved}, 
it is contained in a finite 
union of Frobenius subsets of $\cM\cap F_i\times\N$. Overall, we have that $\cM\cap F_i\times\N$
is a finite union of its Frobenius subsets.

\textbf{Case 2:} $\{|\cM_1(m)|:\ m\in F_i\}$ is unbounded. Hence there exists $\tilde{m}\in F_i$, chosen to be minimal, 
such that $|\cM_1(\tilde{m})|>C_1$. By Proposition~\ref{prop:C1}, 
 there
are $n_1<n_2$ in $\cM_1(\tilde{m})$
such that $q:=\displaystyle\frac{n_2}{n_1}$ is a power of $p$. This implies
$\cO[s^{\tilde{m}}]=\cO[t^{n_1}]=\cO[t^{n_2}]=\cO[t^{n_1q}]
=\cO[s^{\tilde{m}q}].$

Hence
for every $n\in \cM_1(\tilde{m})$, we have $\cO[t^{nq}]=\cO[s^{\tilde{m}q}]
=\cO[s^{\tilde{m}}]$ which gives that $nq\in\cM_1(\tilde{m})$. By 
Proposition~\ref{prop:pi_1 pi_2} and
Lemma~\ref{lem:easy Frobenius},
 $\cM_1(\tilde{m})$ is a finite union of Frobenius subsets of base $q$. Since $\cO[s^{\tilde{m}}]=\cO[s^{\tilde{m}q}]$,
 we have that $\cM_1(\tilde{m})=\cM_1(\tilde{m}q^v)$ for every
 $v\in\N$. Therefore, the set:
 $$\{(m,n)\in \cM\cap F_i\times\N:\ m=\tilde{m}q^v\ \text{for some $v\in\N_0$}\}$$
 is a finite union of doubly Frobenius subsets (of base $q$). 
 
Write $q=p^{w}$ for some $w\in\N$. For $1\leq j\leq w-1$, by Lemma~\ref{lem:new preserved}, the set $\cM_1(\tilde{m}p^j)$ has two elements whose
quotient is $q$ since the same holds for $\cM_1(\tilde{m})$. Hence we repeat
the same arguments where $\tilde{m}$ is replaced by $\tilde{m}p^j$
to conclude that the set:
 $$\{(m,n)\in \cM\cap F_i\times\N:\ m=\tilde{m}p^jq^v\ \text{for some $v\in\N_0$}\}$$
 is a finite union of doubly Frobenius subsets (of base $q$).  

Finally, by the minimality of $\tilde{m}$, the set
$$\{(m,n)\in \cM\cap F_i\times\N: m<\tilde{m}\}$$
is finite. By Lemma~\ref{lem:new preserved}, this set is contained
in a finite union of Frobenius subsets of $\cM\cap F_i\times\N$. 

Overall, we conclude that $\cM\cap F_i\times\N$ is a finite union
of Frobenius and doubly Frobenius subsets. This finishes the
proof.
	\end{proof}

	\subsection{The case when $K(s^e)=K(t^f)$}
	We now assume that $K(s^e)=K(t^f)$ and denote this field by $K^o$. Note that $K\subsetneq K^o$ by the assumption on $s$ and $t$. For  $1\leq k\leq e$ and $1\leq \ell\leq f$,  consider the set
	$\cM(k,\ell)=\{(m,n)\in \cM(\cO,s,t):\ m\equiv k \bmod e,\ n\equiv \ell
	\bmod f\}.$
	The convenience
	of doing this is that we can fix $F:=K(s^k)=K(s^m)=K(t^n)=K(t^{\ell})$
	for $(m,n)\in \cM(k,\ell)$. We have the tower of fields:
	  $$K\subsetneq K^o\subset F\subset L.$$

	As before, for every $(m,n)\in \cM(k,\ell)$ and
	 $\sigma\in \Gal(L/K)\setminus \Gal(L/F)$ there is $u_{m,n,\sigma}\in G$ such that
	$0\neq s^m-\sigma(s^m)=u_{m,n,\sigma}(t^n-\sigma(t^n)).$
	Therefore $\displaystyle\bfx_{m,n,\sigma}:=
	\left(\frac{s^m}{\sigma(s^m)},-\frac{u_{m,n,\sigma}t^n}{\sigma(s^m)},
	\frac{u_{m,n,\sigma}\sigma(t^n)}{\sigma(s^m)}\right)$ is a solution of
	the unit equation
	\begin{equation}\label{eq:3 variables}
	x+y+z=1\ \text{with $(x,y,z)\in G^3$.} 
	\end{equation}
	Note that $\bfx_{m,n,\sigma}=\bfx_{m,n,\tau}$
	and $u_{m,n,\sigma}=u_{m,n,\tau}$
	if the two cosets $\sigma \Gal(L/F)$ and $\tau\Gal(L/F)$ coincide. We have the following:
	\begin{proposition}\label{prop:new degenerate}
	The set of $(m,n)\in\cM(\cO,s,t)$ such that $\bfx_{m,n,\sigma}$
	is degenerate
	for every coset $\sigma\Gal(L/F)$ with $\sigma\notin\Gal(L/K^o)$
	is contained in $\cA(\cO,s,t)\cup\cB(\cO,s,t)\cup\cC(\cO,s,t)$.
	\end{proposition}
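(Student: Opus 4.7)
Fix a coset $\sigma\Gal(L/F)$ with $\sigma\notin\Gal(L/K^o)$; the degeneracy hypothesis says some two-term sub-sum of $\bfx_{m,n,\sigma}=(x,y,z)$ vanishes. First I will rule out $y+z=0$: this reduces to $\sigma(t^n)=t^n$, forcing $\sigma\in\Gal(L/F)\subseteq\Gal(L/K^o)$, which is excluded. The remaining two alternatives translate into $\sigma\in H_1:=\Gal(L/K(s^m/t^n))$ (the case $x+y=0$, with $u_{m,n,\sigma}=s^m/t^n$) and $\sigma\in H_2:=\Gal(L/K(s^m t^n))$ (the case $x+z=0$, with $u_{m,n,\sigma}=-s^m/\sigma(t^n)$). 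Thus the hypothesis becomes the group-theoretic covering $\Gal(L/K)=\Gal(L/K^o)\cup H_1\cup H_2$.

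Because $\Gal(L/K^o)$ is a proper subgroup ($K\subsetneq K^o$) and no finite group is the union of two of its proper subgroups, three sub-cases arise: $H_1=\Gal(L/K)$, $H_2=\Gal(L/K)$, or all three subgroups are proper of index $2$ (the \emph{Klein-four configuration}, in which their common intersection has index $4$).

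The first sub-case forces $s^m/t^n\in K$; using that $\cO[s^m]$ is a free $\cO$-module with basis $1,s^m,\ldots,(s^m)^{d-1}$ (where $d=[F:K]\ge 2$ since $s^m\notin\cO$ by the standing hypothesis of Theorem~\ref{thm:B}), the unique expansion of $t^n=(s^m/t^n)^{-1}s^m$ in this basis forces $(s^m/t^n)^{-1}\in\cO$; the symmetric computation in $\cO[t^n]$ gives $s^m/t^n\in\cO^*$, so $(m,n)\in\cA$. In the second sub-case, $c:=s^m t^n\in K^*\cap\cO$; the free-basis expansion of $c/s^m=t^n\in\cO[s^m]$ (using the minimal polynomial of $s^m$ to express $1/s^m$) forces $N_{F/K}(s^m)\mid c$, and symmetrically $N_{F/K}(t^n)\mid c$, in $\cO$. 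Combined with $N(s^m)N(t^n)=N(c)=c^d$, one gets $c^{2-d}\in\cO$, i.e.\ $c^{d-2}\in\cO^*$; when $d\ge 3$ this yields $c\in\cO^*$ and $(m,n)\in\cC$. When $d=2$ one has $F=K^o$ and $[K(t^n):K]=2$, so for the nontrivial $K$-automorphism $\sigma$ of $K(t^n)=F$ the identity $s^m/\sigma(t^n)=-u_{m,n,\sigma}$ together with a direct $\sigma$-invariance check places this quantity in $\cO[s^m,\sigma(s^m)]^*\cap K=\cO^*$, whence $(m,n)\in\cB$.

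The principal obstacle is the Klein-four configuration, which forces $[K^o:K]=[K(s^m/t^n):K]=[K(s^m t^n):K]=2$ and is a priori consistent with $[F:K]\in\{4,8\}$, which would fall outside $\cA\cup\cB\cup\cC$. The plan is to rule it out by exploiting the relation $K^o=K(s^e)=K(t^f)$: the cocycle identities $\sigma_i(\xi_i)\xi_i=1$ for the transforms $\xi_i:=\sigma_i(s^m)/s^m$ and $\eta_i:=\sigma_i(t^n)/t^n$ (with $\sigma_1,\sigma_2,\sigma_3$ representatives of the three nontrivial classes of the Klein-four quotient), together with the fact that $\sigma_3=\sigma_1\sigma_2$ fixes $s^e$ and $t^f$ and the congruences $m\equiv k\bmod e$, $n\equiv\ell\bmod f$, will force the $\xi_i$ and $\eta_i$ into the finite constant field $\bar{\bF}_p\cap F$ as roots of unity of bounded order. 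A rank count against the requirement that $F=K(s^m)=K(t^n)$ contain three distinct degree-$2$ subfields then contradicts the Klein-four configuration unless $[F:K]=2$, reducing us to a case already handled.
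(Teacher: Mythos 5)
Your reduction is set up correctly: for $\sigma\notin\Gal(L/K^o)$ the sub-sum $y+z$ cannot vanish, and the vanishing of $x+y$ (resp.\ $x+z$) is in fact \emph{equivalent} to $\sigma\in H_1:=\Gal(L/K(s^m/t^n))$ (resp.\ $\sigma\in H_2:=\Gal(L/K(s^mt^n))$), so the hypothesis does become the covering $\Gal(L/K)=\Gal(L/K^o)\cup H_1\cup H_2$. The two sub-cases $H_1=\Gal(L/K)$ and $H_2=\Gal(L/K)$ are handled correctly (the free-basis argument in $\cO[s^m]$, the norm computation giving $c^{d-2}\in\cO^*$, and the $d=2$ identity $s^m/\sigma(t^n)=-u_{m,n,\sigma}\in\cO^*$ all check out) and they do land in $\cA$, $\cC$, and $\cB$; this part is in line with the characteristic-zero argument of \cite{KN-TAMS2015} that the paper invokes.

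The gap is the Klein-four configuration. You correctly observe that if it occurred the proposition would fail (there $s^m/t^n\notin K$, $s^mt^n\notin K$, and $[K(t^n):K]\geq 4$, so $(m,n)$ lies in none of $\cA,\cB,\cC$), so it \emph{must} be shown to be vacuous --- but you only announce a plan, and the plan as stated does not go through. First, the cocycle identity $\sigma_i(\xi_i)\xi_i=1$ presupposes $\sigma_i^2\in\Gal(L/F)$, whereas in this configuration one only knows $\sigma_i^2\in G_0:=\Gal(L/K^o)\cap H_1\cap H_2=\Gal(L/E)$ with $E$ the degree-$4$ compositum, and $\Gal(L/F)$ may be a proper subgroup of $G_0$ (precisely the case $[F:K]=8$ you allow). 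Second, no mechanism is given that would force a solution of $\sigma(\xi)\xi=1$ in a function field to be a root of unity. Third, the target of the plan is exactly equivalent to the contradiction sought: if $\xi_1=\sigma_1(s^m)/s^m$ were a root of unity of order $j$, then $\sigma_1$ would fix $s^{mj}$, hence fix $K(s^e)\subseteq K(s^{mj})$, i.e.\ $\sigma_1\in\Gal(L/K^o)$, which is excluded --- so the whole difficulty of the proposition is concentrated in the one step you assert without proof. (For what it is worth, part of this case is elementary: since $K^o\subseteq K(s^{2m})\subseteq E$, either $K(s^{2m})=K^o$, which in odd characteristic forces $s^{4m}\in K$ and contradicts $K\subsetneq K^o\subseteq K(s^{4m})$, or $K(s^{2m})=E$, and it is this last situation that still requires a genuine argument, presumably exploiting the unit relations $u_{m,n,\sigma}\in\cO_L^*$ as in \cite{KN-TAMS2015}.) As written, the proof is incomplete.
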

	\begin{proof}
		A proof in the characteristic zero case is given in
		\cite[pp.~12--14]{KN-TAMS2015}. The same proof
		can be used for positive characteristic.
	\end{proof}

	By Proposition~\ref{prop:new degenerate}, to finish
	the proof of Theorem~\ref{thm:B}, we show that 	
	for every $\sigma\in\Gal(L/K)\setminus\Gal(L/K^o)$,  
	the set:
	$$\cM(k,\ell,\sigma):=\{(m,n)\in \cM(k,\ell):\ \text{$\bfx_{m,n,\sigma}$ is 
	nondegenerate}\}$$
	is contained in a finite union of sets of the form
	$F(q;c_1,c_2,c_3,c_4)$. We have:
	\begin{lemma}\label{lem:preserve degeneracy}
	There is a power $q_1>1$ of $p$ such that 
	$(q_1m,q_1n)\in\cM(k,\ell,\sigma)$ whenever
	 $(m,n)\in \cM(k,\ell,\sigma)$.
	\end{lemma}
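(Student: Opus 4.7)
The plan is to reuse the power $q_1 > 1$ of $p$ supplied by part (c) of Lemma~\ref{lem:new preserved}, which already guarantees that $(q_1 m, q_1 n) \in \cM(k,\ell)$ whenever $(m,n) \in \cM(k,\ell)$. What remains is to verify that for this choice of $q_1$, the non-degeneracy of $\bfx_{m,n,\sigma}$ is promoted to non-degeneracy of $\bfx_{q_1 m, q_1 n, \sigma}$, which is the statement that $(q_1 m, q_1 n) \in \cM(k,\ell,\sigma)$.

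The key observation is that since $q_1$ is a power of $p$ and $\cha(K) = p$, the map $x \mapsto x^{q_1}$ is a ring homomorphism on $L$ that commutes with $\sigma$. In particular, $s^{q_1 m} - \sigma(s^{q_1 m}) = (s^m - \sigma(s^m))^{q_1}$ and $t^{q_1 n} - \sigma(t^{q_1 n}) = (t^n - \sigma(t^n))^{q_1}$. Dividing yields $u_{q_1 m, q_1 n, \sigma} = u_{m,n,\sigma}^{q_1}$. Plugging into the definition of $\bfx$ shows that $\bfx_{q_1 m, q_1 n, \sigma}$ is exactly the coordinatewise $q_1$-th power of $\bfx_{m,n,\sigma}$; that is, if $\bfx_{m,n,\sigma} = (x_1, x_2, x_3)$, then $\bfx_{q_1 m, q_1 n, \sigma} = (x_1^{q_1}, x_2^{q_1}, x_3^{q_1})$.

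To conclude, I would observe that in characteristic $p$ a subsum $x_{i_1}^{q_1} + \cdots + x_{i_r}^{q_1}$ equals $(x_{i_1} + \cdots + x_{i_r})^{q_1}$, so it vanishes precisely when $x_{i_1} + \cdots + x_{i_r}$ vanishes. Hence $\bfx_{q_1 m, q_1 n, \sigma}$ admits a vanishing proper subsum if and only if $\bfx_{m,n,\sigma}$ does. Since the latter is non-degenerate by the assumption $(m,n) \in \cM(k,\ell,\sigma)$, so is the former, completing the proof. There is no real obstacle here; the lemma is essentially a bookkeeping statement that records the compatibility of the Frobenius with the unit-equation framework of Section~\ref{subsec:notation B}.
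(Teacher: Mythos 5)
Your proof is correct and follows exactly the paper's argument: the paper's own proof simply cites part (c) of Lemma~\ref{lem:new preserved} together with the fact that $\bfx_{q_1m,q_1n,\sigma}$ is degenerate iff $\bfx_{m,n,\sigma}$ is, and you have supplied the Frobenius computation that justifies that equivalence.
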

	\begin{proof}
	This follows from part (c) of Lemma~\ref{lem:new preserved} and the fact that $x_{q_1m,q_1n,\sigma}$ is degenerate iff $x_{m,n,\sigma}$ is degenerate.
	\end{proof}

	Since $\sigma\notin\Gal(L/K^o)$,
	it does not fix $s^n$ or $t^n$ for any $n\in\N$. In other words,
	$\displaystyle \frac{s}{\sigma(s)}$
	and $\displaystyle \frac{t}{\sigma(t)}$
	are not roots of unity.
	Apply Proposition~\ref{prop:refined n} to
	the unit equation \eqref{eq:3 variables}, we have that there
	exists a positive integer $c$ and a finite set 
	$\scrS'$ (contained in $\bar{L}^*$)
	such that for every $(m,n)\in\cM(k,\ell,\sigma)$ the
	identities
	\begin{equation}\label{eq:apply unit eq}
		\left(\frac{s^m}{\sigma(s^m)}\right)^{p^c}=x_1^{p^i}x_2^{p^j};\ 
		\left(\frac{-u_{m,n,\sigma}t^n}{\sigma(s^m)}\right)^{p^c}=y_1^{p^i}y_2^{p^j};\ 
		\left(\frac{u_{m,n,\sigma}\sigma(t^n)}{\sigma(s^m)}\right)^{p^c}=z_1^{p^i}z_2^{p^j}	
	\end{equation}
	
%	\begin{align}\label{eq:apply unit eq}
%	\begin{split}
%		\frac{s^m}{\sigma(s^m)}&=x_0x_1^{q^i}x_2^{q^j}\\
%		\frac{-u_{m,n,\sigma}t^n}{\sigma(s^m)}&=y_0y_1^{q^i}y_2^{q^j}\\
%		\frac{u_{m,n,\sigma}\sigma(t^n)}{\sigma(s^m)}&=z_0z_1^{q^i}z_2^{q^j}	
%	\end{split}
%	\end{align}
	
	hold for some $x_1,\ldots,z_2\in \scrS'$
	and some $i,j\in\N_0$. Note that the last two equations
	of \eqref{eq:apply unit eq}
	implies that $\displaystyle\left(\frac{t^n}{\sigma(t^n)}\right)^{p^c}$
	also has the form $w_1^{p^i}w_2^{p^j}$
	where $w_1$ and $w_2$ belong to a finite set. Let $\tilde{G}$
	be the group
 generated by this finite set, 
	the set $\scrS'$, and the group $G$. Since
	$\displaystyle\frac{s}{\sigma(s)}$
	and $\displaystyle\frac{t}{\sigma(t)}$
	are non-torsion, using a basis of the free group
	$\tilde{G}/\tilde{G}_{\tor}$ to compare exponents as
	in the proof of Proposition~\ref{prop:refined n},
	we have that there exist finitely
	many quadruples 
	$\bfa_h=(a_{h1},a_{h2},a_{h3},a_{h4})\in\Q^4$
	for $h\in I$
	such that $\scrM(k,\ell,\sigma)$ is contained in
	$$\displaystyle\bigcup_{h\in I}F(p;a_{h1},a_{h2},a_{h3},a_{h4})$$
	where (recall Definition~\ref{def:non-degenerate})		
	$F(p;a_{h1},\ldots,a_{h4})=\{(a_{h1}p^i+a_{h2}p^j,a_{h3}p^i+a_{h4}p^j):\ i,j\in\N_0\}$. This finishes the proof of
	Theorem~\ref{thm:B}.

	\section{An addendum to Theorem~\ref{thm:B}}\label{sec:addendum}
	For the sake of completeness, we briefly discuss Problem~(B)
	under the condition that $\{s^n,t^n:\ n\in\N\}\cap\cO\neq \emptyset$. The problem in this case becomes much easier and
	we model this section based on \cite[Section~5]{KN-TAMS2015}
	with appropriate modification for positive characteristic. 
	%Let $L$, $G$, and $r$ be as in 
	%Subsection~\ref{subsec:notation B}.
	 Write $\cM:=\cM(\cO,s,t)$. For $\alpha,\beta\in \N$,
	let $A(\alpha,\beta)$ denote the arithmetic progression
	$\{k\alpha+\beta:\ k\in\N_0\}$.
	We may assume $t^f\in \cO$ and consider two cases.
	
	\subsection{The case $s^e\notin\cO$}
	For $1\leq \ell\leq f$, let
	$\cM(\cdot,\ell):=\{(m,n)\in\cM(\cO,s,t):\ n\equiv \ell \bmod f\}$. Note that $\cM(\cdot,f)=\emptyset$ since $K(s^e)\neq K$. We have:
	\begin{proposition}\label{prop:s^e notin O} The following results hold.
		\begin{itemize}
			\item [(a)] For each $\ell\in\{1,\ldots,f-1\}$,
			$\pi_1(\cM(\cdot,\ell))$ is a finite union 
			of Frobenius
			subsets of $\N$. 
			\item [(b)] If $t^f\notin\cO^*$ then $\cM$
			is a finite union of Frobenius subsets of $\N^2$.
			\item [(c)] Assume $t^f\in\cO^*$ and let $\ell\in\{1,\ldots,f-1\}$. Then 
			$\cM(\cdot,\ell)=\pi_1(\cM(\cdot,\ell))\times A(f,\ell)$.		
		\end{itemize}
	\end{proposition}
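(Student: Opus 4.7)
The plan is to handle (a) via the machinery already assembled, to deduce (b) from a uniqueness argument exploiting the failure of $(t^f)^k$ to be a unit, and to read off (c) from a direct calculation. As a preliminary, note that $\cM(\cdot,f) = \emptyset$: if $n \equiv 0 \bmod f$ then $t^n \in \cO$, so $\cO[s^m] = \cO[t^n] = \cO$ forces $s^m \in \cO$ and hence $K(s^e) \subseteq K(s^m) = K$, contradicting $s^e \notin \cO$ (as $\cO$ is integrally closed).

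For (a), since $t^f \in \cO$ we have $K(t^f) = K$, while $K(s^e) \neq K$, so $K(s^e) \nsubseteq K(t^f)$. Proposition~\ref{prop:pi_1 pi_2}(a) then places $\pi_1(\cM)$, and hence $\pi_1(\cM(\cdot,\ell))$, inside a finite union of $p$-Frobenius subsets of $\N$. By Lemma~\ref{lem:new preserved}(b) we may pick a power $q$ of $p$ with $q \equiv 1 \bmod f$; iterating Lemma~\ref{lem:new preserved}(a) gives $q \pi_1(\cM(\cdot,\ell)) \subseteq \pi_1(\cM(\cdot,\ell))$, and Lemma~\ref{lem:easy Frobenius} completes the proof of (a).

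For (b), fix $\ell \in \{1,\ldots,f-1\}$. By minimality of $f$, $t^\ell \notin K$, so some $\sigma \in \Gal(L/K)$ moves $t^\ell$. If $(m,n_1),(m,n_2) \in \cM(\cdot,\ell)$ with $n_2 = n_1 + kf$, writing each of $t^{n_1}$ and $t^{n_2}$ as a polynomial in the other with coefficients in $\cO$ (using $\cO[t^{n_1}] = \cO[t^{n_2}]$) and applying $\sigma$ yields
\[
\frac{t^{n_2} - \sigma(t^{n_2})}{t^{n_1} - \sigma(t^{n_1})} \in \cO_L^*.
\]
Because $\sigma$ fixes $t^f \in K$, this ratio simplifies to $(t^f)^k \in K \cap \cO_L^* = \cO^*$, which by the hypothesis $t^f \notin \cO^*$ is possible only when $k = 0$, i.e., $n_1 = n_2$. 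Thus each $m \in \pi_1(\cM(\cdot,\ell))$ has a unique witness $n_\ell(m)$. Taking the $q$ from (a), Lemma~\ref{lem:new preserved}(a) combined with uniqueness forces $n_\ell(qm) = q\, n_\ell(m)$. Writing the decomposition from (a) as $\pi_1(\cM(\cdot,\ell)) = \bigsqcup_i \{q^j \mu_{\ell,i} : j \in \N_0\}$, one obtains $\cM(\cdot,\ell) = \bigsqcup_i F_1(q; (\mu_{\ell,i}, n_\ell(\mu_{\ell,i})))$; unioning over $\ell \in \{1,\ldots,f-1\}$ proves (b).

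Finally, (c) is immediate: if $t^f \in \cO^*$ and $n = \tilde n f + \ell$, then $t^n = (t^f)^{\tilde n} t^\ell$ with $(t^f)^{\tilde n} \in \cO^*$, so $\cO[t^n] = \cO[t^\ell]$. Membership in $\cM(\cdot,\ell)$ thus depends only on $m$, yielding $\cM(\cdot,\ell) = \pi_1(\cM(\cdot,\ell)) \times A(f,\ell)$. The key obstacle is the cancellation step in (b): one must verify that the relevant ratio lies in $\cO_L^*$ and collapses cleanly to $(t^f)^k$ via $\sigma$-invariance of $t^f$; once this is in hand, the Frobenius bookkeeping from (a) carries everything through.
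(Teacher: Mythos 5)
Your proof is correct and follows essentially the same route as the paper: part (a) via Proposition~\ref{prop:pi_1 pi_2} together with Lemmas~\ref{lem:new preserved} and~\ref{lem:easy Frobenius}, part (b) via the key computation that two witnesses $n_1<n_2$ in the same residue class mod $f$ force $t^{(\tilde n_2-\tilde n_1)f}\in\cO^*$ and hence $t^f\in\cO^*$, and part (c) by the trivial observation $\cO[t^n]=\cO[t^\ell]$. The only (harmless) difference is that you finish (b) by exploiting uniqueness of the witness $n_\ell(m)$ and the relation $n_\ell(qm)=qn_\ell(m)$ directly, whereas the paper bounds $|\cM_1(m)|\le f-1$ and appeals to Case~1 of the proof of Proposition~\ref{prop:B2}; your version is, if anything, more self-contained.
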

	\begin{proof}
	 The same arguments in the proof of Proposition~\ref{prop:pi_1 pi_2} can be used to prove part (a). 
	 
	 For part (b), note that $\pi_1(\cM)$ is a finite union of
	 Frobenius subsets of $\N$ due to part (a). For any $m\in\pi_1(\cM)$, we prove that the set 
	 $$\cM_1(m):=\{n\in\N:\ (m,n)\in \cM_1(m)\}$$
	has at most $f-1$ elements. Once this is done, we can use the same
	arguments as in the first case of the proof of Proposition~\ref{prop:B2}. 
	It suffices to show that for any $\ell\in\{1,\ldots,f-1\}$, there is at most one $n\in\N$ such that 
	$(m,n)\in \cM$ and $n\equiv \ell \bmod f$. Assume
	there are two such elements, namely $n_1<n_2$.
	Write $n_1=\tilde{n}_1f+\ell$ and $n_2=\tilde{n}_2f+\ell$.
	Pick
	$\sigma\in \Gal(L/K)$ such that $\sigma\notin\Gal(L/K(s^e))$,
	hence $\sigma$ does not fix any power of $s$. As before,
	there are units $u_{m,n_1,\sigma}$ and $u_{m,n_2,\sigma}$
	in $\cO[s^m,\sigma(s^m)]$
	such that:
	$$0\neq s^m-\sigma(s^m)=u_{m,n_1,\sigma}(t^{n_1}-\sigma(t^{n_1}))=u_{m,n_1,\sigma}t^{\tilde{n}_1f}(t^{\ell}-\sigma(t^{\ell}));$$ 
	$$0\neq s^m-\sigma(s^m)=u_{m,n_2,\sigma}(t^{n_2}-\sigma(t^{n_2}))=u_{m,n_2,\sigma}t^{\tilde{n}_2f}(t^{\ell}-\sigma(t^{\ell})).$$
	This implies $t^{(\tilde{n}_2-\tilde{n}_1)f}$ is a unit. Hence $t^f\in\cO^*$, contradiction. This proves part (b).
	
	Part (c) follows from the fact that  $\cO[t^n]=\cO[t^{\ell}]$
	for every $n\in A(f,\ell)$ since $t^f\in\cO^*$.
	\end{proof}
	
	\subsection{The case $s^e\in\cO$}
	This could be taken almost verbatim from \cite[Subsection~5.2]{KN-TAMS2015}, so we will be brief. It suffices to describe
	the set $\cM(k,\ell)$ for $1\leq k\leq e$ and 
	$1\leq \ell\leq f$. It is immediate that 
	$W(e,\ell)=W(k,f)=\emptyset$ if $\ell<f$ and $k<e$. On
	the other hand, $W(e,f)=e\N\times f\N$.
	
	From now on, we study $W(k,\ell)$ under the assumption
	that $k<e$ and $\ell<f$. We also assume
	$K(s^k)=K(t^{\ell})$, otherwise $W(k,\ell)=\emptyset$.
	By the same arguments in \cite[pp.~15]{KN-TAMS2015},
	we have that if there exist distinct $(m_1,n_1),(m_2,n_2)\in 
	W(k,\ell)$ then:
	\begin{equation}\label{eq:m2m1 n2n1}
		\frac{s^{m_2-m_1}}{t^{n_2-n_1}}\in \cO^*.
	\end{equation}
	
	We have the following:
		\begin{proposition}\label{prop:non-unit} The following results hold.
		\begin{itemize}
		\item [(a)] If both $s$ and $t$ are units (i.e. $s,t\in\cO_L^*$) then
		$W(k,\ell)$ is either empty or has the form
		$$(k,\ell)+e\N\times f\N.$$
		\item [(b)] If $s$ is a unit and $t$ is not then 
		$W(k,\ell)$
		is empty.
		The similar statement holds
		when $t$ is a unit and $s$ is not.
		\item [(c)] Assume that neither $s$ nor $t$ is a unit. 
		If $W(k,\ell)\neq \emptyset$
		then the following holds. There is a minimal pair $(M,N)\in\N^2$
		satisfying $\displaystyle\frac{s^{eM}}{t^{fN}}\in\cO^*$. For any two distinct
		pairs
		$(m_1,n_1),(m_2,n_2)\in W(k,\ell)$, we have $(m_2-m_1)(n_2-n_1)>0$.
		Moreover, we have $\displaystyle\frac{m_2-m_1}{eM}=\frac{n_2-n_1}{fN}$ and it is an integer.
		\end{itemize}	
	\end{proposition}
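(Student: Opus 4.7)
The plan is to build everything on equation~\eqref{eq:m2m1 n2n1} --- any two distinct $(m_1,n_1),(m_2,n_2)\in W(k,\ell)$ satisfy $s^{m_2-m_1}/t^{n_2-n_1}\in\cO^*$ --- together with two elementary facts: (i) the unit group $\cO_L^*$ is pure, so $x^n\in\cO_L^*$ forces $x\in\cO_L^*$ (because $x^{-1}$ is then integral over $\cO_L$, which is integrally closed), and (ii) multiplying a generator by a unit of $\cO$ leaves the generated $\cO$-algebra unchanged. A direct consequence of (i) is that, when neither $s$ nor $t$ lies in $\cO_L^*$, every nonzero element $(a,b)$ of the subgroup $U:=\{(a,b)\in\Z^2 : s^a/t^b\in\cO^*\}\subset\Z^2$ has both coordinates nonzero and of the same sign, since otherwise $s^a$ or $t^{-b}$ alone would be a unit in $\cO_L$.

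For (a), under $s,t\in\cO_L^*$ I would first observe that $s^e,t^f\in\cO\cap\cO_L^*=\cO^*$ (the intersection equals $\cO^*$ because $\cO$ is integrally closed in $K$). The identity $\cO[s^{k+ea}]=\cO[s^k\cdot(s^e)^a]=\cO[s^k]$ for $a\in\N_0$, and its $t$-analogue, then show that a single $(m_0,n_0)\in W(k,\ell)$ forces $\cO[s^{m_0}]=\cO[s^k]$ and $\cO[t^{n_0}]=\cO[t^\ell]$, hence $\cO[s^k]=\cO[t^\ell]$, which in turn propagates to the whole set $\{(k+ea,\ell+fb):a,b\in\N_0\}$. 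The reverse containment is immediate from the congruence definition of $W(k,\ell)$.

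For (b), assuming $s\in\cO_L^*$ and $t\notin\cO_L^*$, I would pick any $(m_0,n_0)\in W(k,\ell)$ and exploit the fact that the constant term of the minimal polynomial of the integral unit $s^{m_0}$ over $K$ lies in $\cO^*$. This yields $s^{-m_0}\in\cO[s^{m_0}]=\cO[t^{n_0}]\subset\cO_L$, hence $t^{n_0}\in\cO_L^*$, and purity forces $t\in\cO_L^*$ --- a contradiction. Thus $W(k,\ell)=\emptyset$.

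For (c), the plan is to use purity to bound $\rk(U)\leq 1$ and then extract a distinguished generator. Given any $(m_0,n_0)\in W(k,\ell)$, Lemma~\ref{lem:new preserved}(c) supplies a power $q_1>1$ of $p$ with $(q_1 m_0, q_1 n_0)\in W(k,\ell)$, and \eqref{eq:m2m1 n2n1} then hands over the nonzero element $((q_1-1)m_0,(q_1-1)n_0)$ of $U\cap(e\Z\times f\Z)$ (the divisibility by $e$ and $f$ is built into the choice of $q_1$). Therefore $U\cap(e\Z\times f\Z)$ is a rank-$1$ subgroup of $\Z^2$; let $(eM,fN)$ with $M,N\in\N$ be its positive primitive generator. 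Every difference $(m_2-m_1,n_2-n_1)$ of two elements of $W(k,\ell)$ lies in $\Z\cdot(eM,fN)$, which delivers simultaneously the sign statement $(m_2-m_1)(n_2-n_1)>0$ and the equality $(m_2-m_1)/(eM)=(n_2-n_1)/(fN)\in\Z$. The one thing to verify carefully is that Lemma~\ref{lem:new preserved} applies here despite Section~\ref{sec:addendum} being outside its original hypothesis $\{s^n,t^n:n\in\N\}\cap\cO=\emptyset$; its proof relies only on $\gcd(e,p)=\gcd(f,p)=1$, which rests on the identity $X^{p\alpha}-1=(X^\alpha-1)^p$ in characteristic $p$ and the minimality definition of $e,f$ --- neither affected by the integrality status of powers of $s$ or $t$.
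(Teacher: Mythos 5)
Your parts (a) and (c) are correct and in fact more explicit than the paper, which simply defers to the characteristic-zero argument of \cite[Proposition~5.2]{KN-TAMS2015} and records only the one new ingredient: choosing a power $q>1$ of $p$ with $q\equiv 1\bmod ef$ so that $(qm,qn)\in W(k,\ell)$ whenever $(m,n)\in W(k,\ell)$, which makes $W(k,\ell)$ infinite as soon as it is nonempty. Your reduction of (c) to the subgroup $U=\{(a,b)\in\Z^2:\ s^a/t^b\in\cO^*\}$, the purity argument showing that every nonzero element of $U$ has both coordinates nonzero and of the same sign (which is also what forces $U$ to have rank one rather than two), and your check that Lemma~\ref{lem:new preserved} remains valid outside the hypothesis $\{s^n,t^n:\ n\in\N\}\cap\cO=\emptyset$ are all sound.

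Part (b), however, contains a genuine non sequitur. From the constant term of the minimal polynomial of $s^{m_0}$ you correctly obtain $s^{-m_0}\in\cO[s^{m_0}]=\cO[t^{n_0}]$, but the step ``hence $t^{n_0}\in\cO_L^*$'' does not follow: the containment $s^{-m_0}\in\cO_L$ merely re-expresses the hypothesis that $s^{m_0}$ is a unit of $\cO_L$ and says nothing about the generator $t^{n_0}$. A ring can be generated over $\cO$ by a non-unit and still contain units outside $\cO$; for instance $\Z[1+i]=\Z[i]$ contains the unit $i$ while $1+i$ is not a unit. The repair is exactly the mechanism you already deploy in part (c), and is what the paper itself indicates: take $(m_0,n_0)\in W(k,\ell)$, produce $(q_1m_0,q_1n_0)\in W(k,\ell)$ with $q_1>1$ via Lemma~\ref{lem:new preserved}, and apply \eqref{eq:m2m1 n2n1} to these two distinct pairs to get $s^{(q_1-1)m_0}/t^{(q_1-1)n_0}\in\cO^*$; since $s\in\cO_L^*$ this forces $t^{(q_1-1)n_0}\in\cO_L^*$, and purity then gives $t\in\cO_L^*$, a contradiction. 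With that substitution your proof is complete.
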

	\begin{proof}
		This is proved as in the proof of \cite[Proposition~5.2]
		{KN-TAMS2015}. The only difference is that in our current 
		setting, if $W(k,\ell)\neq \emptyset$ then it is infinite.
		In fact, let $q>1$ be a power of $p$ such that 
		$q\equiv 1 \bmod ef$. We have that $(qm,qn)\in W(k,\ell)$
		whenever $(m,n)\in W(k,\ell)$. This fact and Equation \eqref{eq:m2m1 n2n1} imply part (b).
	\end{proof}
	
	From now on, we assume that neither $s$ nor $t$ is a unit,
	there is a minimal pair $(M,N)\in\N^2$ such that
	$\displaystyle \frac{s^{eM}}{t^{fN}}\in\cO^*$, and 
	$W(k,\ell)\neq\emptyset$. Part (c) of Proposition~
	\ref{prop:non-unit} implies that
	$W(k,\ell)$ has
	a minimal element $(\tm,\tn)$ and every $(m,n)\in W(k,\ell)$
	has the form $(\tm+\delta eM,\tn+\delta fN)$
	for some $\delta\in\N_0$. We can use exactly the same
	method in \cite[pp.16--17]{KN-TAMS2015}
	to find an upper bound for $(\tm,\tn)$ and 
	to determine all such $\delta$'s.

	\bibliographystyle{amsalpha}
	\bibliography{Char_p_30Aug2015}

\end{document}